\newtheorem{theorem}{Theorem}[section]
\newtheorem{theorem and definition}{Theorem and definition}
\newtheorem{corollary}[theorem]{Corollary}
\newtheorem{definition}[theorem]{Definition}
\newtheorem{lemma}[theorem]{Lemma}
\newtheorem{proposition}[theorem]{Proposition}
\newenvironment{proof}[1][Proof]{\textbf{#1.} }{\ \rule{0.5em}{0.5em}}
\numberwithin{equation}{section}
\def\Z{\mathbb{ Z}}
\def\R{\mathbb{ R}}
\title{Some existence results for the modified binormal curvature flow equation}
\date{\today}
\providecommand{\classification }[2]{\textbf{\textit{AMS:}} #1}
\begin{document}

\author{
\renewcommand{\thefootnote}{\arabic{footnote}} Haidar Mohamad \footnotemark[1]}

\footnotetext[1]{Laboratoire Jacques-Louis Lions, Universit\'e Pierre et Marie
Curie, Bo\^ite Courrier 187,
75252 Paris Cedex 05, France. E-mail: {\tt tartousi@ann.jussieu.fr}}
\maketitle 

\begin{abstract}
We establish existence and uniqueness results for the modified binormal curvature flow equation that generalizes the binormal curvature flow
equation for a curve in $\R^3.$ In this generalization, the velocity of the curve is still directed along the binormal vector,
but the magnitude of the speed is allowed to depend on th parametrization of the curve, the time and the position of the point in the space. 
We achieve our objective via a generalized form of the Schr\"{o}dinger map equation. 
\end{abstract}
\vfill \classification{76B99 }

\newpage
\section{Introduction} 
\subsection{Statement of the main results}
The modified binormal curvature flow equation for $\gamma: [0,T[\times \mathbb{R}\rightarrow \mathbb{R}^3$ is
\begin{equation}\label{FCB}
\partial_t\gamma= g\left(\partial_x\gamma\wedge \partial^2_x\gamma\right),
\end{equation}
where $\wedge$ denotes the usual vector product in $\R^3$, $T\in \mathbb{R}_+^*\cup \{+\infty\},$ $x$ is  the arc-length parameter
of the curve  $\gamma(t,.)$ for all $t\in [0,T[$ and  $g$ is a real function depending on $t$, $x$ and eventually on $\gamma$.

The binormal curvature flow equation corresponds to $g\equiv 1$, namely
\begin{equation}\label{enBF}\tag{BF}
\partial_t \gamma = \partial_x \gamma \wedge \partial^2_x\gamma.
\end{equation}
Note that this  equation is compatible with the arc-length parametrization 
condition, since
$$
\partial_t \left| \partial_x\gamma\right|^2 = 2 \partial_x \gamma \cdot \partial_x\partial_t \gamma = 2 \partial_x 
\gamma \cdot \left( \partial_x \gamma \wedge \partial^3_x \gamma + \partial^2_x\gamma\wedge \partial^2_x\gamma \right) = 0.
$$
If $(\bf T,N,B)$ denotes the Frenet–Serret frame along the curve $\gamma,$ with $\bf T$ the tangent vector, $\bf N$ the normal vector
and $\bf B:= T \wedge N$ the binormal vector, then \eqref{enBF}  can be written in the geometrical form 
\begin{equation}\label{envit_corub}
 \partial_t\gamma = \kappa \bf B
\end{equation}
where $\kappa$ denotes the curvature of $\gamma$ at the considered point. Recall that the curvature  $\kappa$ and the  torsion $\tau$ are defined 
by the Frenet-Serret formula 
\begin{equation}\label{frenet}
 \partial_x\left(\begin{array}{lr} 
                 \bf T\\ \bf N\\\bf B
                 \end{array}\right)  =
\left(\begin{matrix} 
                  0 & \kappa & 0\\
                  -\kappa & 0 & \tau \\
                    0 & -\tau & 0 \\
                 \end{matrix} \right)
\left(\begin{array}{lr} 
                 \bf T\\\bf N\\\bf B
                 \end{array}\right).
\end{equation}
The formulation \eqref{envit_corub} indicates that the curve evolves with a velocity proportional to its curvature and oriented in the direction of the binormal vector. 
\begin{figure}[h]
 \centering
\scalebox{0.75} 
{
\begin{pspicture}(0,-2.79125)(5.741373,2.79125)
\psbezier[linewidth=0.02,arrowsize=0.05291667cm 2.0,arrowlength=1.4,arrowinset=0.4]{->}(3.3536108,-1.13125)(3.3536108,-1.93125)(1.1872215,-2.1160023)(0.59361076,-1.31125)(0.0,-0.5064977)(0.79456437,1.1308966)(1.6936108,1.56875)(2.592657,2.0066035)(3.7589324,1.4846574)(4.413611,0.72875)(5.0682893,-0.027157435)(5.721373,-1.6608737)(4.873611,-2.19125)
\psline[linewidth=0.02cm,arrowsize=0.05291667cm 2.0,arrowlength=1.4,arrowinset=0.4]{->}(2.4336107,1.74875)(4.1936107,1.70875)
\psline[linewidth=0.02cm,arrowsize=0.05291667cm 2.0,arrowlength=1.4,arrowinset=0.4]{->}(2.4736106,1.72875)(2.4536107,0.32875)
\psline[linewidth=0.02cm,arrowsize=0.05291667cm 2.0,arrowlength=1.4,arrowinset=0.4]{->}(2.4536107,1.74875)(3.6936107,2.34875)
\psline[linewidth=0.02cm,arrowsize=0.05291667cm 2.0,arrowlength=1.4,arrowinset=0.4]{->}(3.2736108,-1.43125)(2.3136108,-2.33125)
\psline[linewidth=0.02cm,arrowsize=0.05291667cm 2.0,arrowlength=1.4,arrowinset=0.4]{->}(3.2736108,-1.45125)(2.1336107,-1.09125)
\psline[linewidth=0.02cm,arrowsize=0.05291667cm 2.0,arrowlength=1.4,arrowinset=0.4]{->}(3.2336109,-1.45125)(3.4536107,-2.61125)
\usefont{T1}{ptm}{m}{n}
\rput(3.8,1.4){$\bf T$} 
\usefont{T1}{ptm}{m}{n}
\rput(2.6,0.59875){$\bf N$} 
\usefont{T1}{ptm}{m}{n}
\rput(3.6,2){$\bf B$} 
\usefont{T1}{ptm}{m}{n}
\rput(1.9045484,-2.3){$\bf T$} 
\usefont{T1}{ptm}{m}{n}
\rput(1.8729857,-1.4){$\bf N$}
\usefont{T1}{ptm}{m}{n}
\rput(3,-2.74125){$\bf B$} 
\rput(2.4,1.9){$\gamma$} 
\psline[linewidth=0.02cm,arrowsize=0.05291667cm 2.0,arrowlength=1.4,arrowinset=0.4]{->}(0.0,-1.80125)(2.4336107,1.74875)
\psline[linewidth=0.02cm,arrowsize=0.05291667cm 2.0,arrowlength=1.4,arrowinset=0.4]{->}(0.0,-1.80125)(0.02,2.57875)
\psline[linewidth=0.02cm,arrowsize=0.05291667cm 2.0,arrowlength=1.4,arrowinset=0.4]{->}(0.02,-1.78125)(1.14,-0.96125)
\psline[linewidth=0.02cm,arrowsize=0.05291667cm 2.0,arrowlength=1.4,arrowinset=0.4]{->}(0.0,-1.78125)(2.96,-3.00125)
\end{pspicture} 
}

\caption{ Frenet-Serret frame.}
\end{figure}
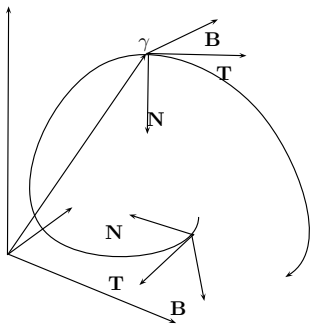
Taking into account the formulation \eqref{envit_corub}, Hasimoto \cite{Hasimoto} proved that if we  define the complex wave function $\Psi$ by
 \begin{equation}\label{eninconnu_Hasimoto}
 \Psi(t,x) = \kappa(t,x)\exp\left(i\int_0^x\tau(t,s)ds\right),
\end{equation}
where $\kappa$ and $\tau$ are the curvatures and the torsions for a family of curves animated by \eqref{enBF}, then $\Psi$ satisfies
the focusing cubic Schr\"odinger equation 
\begin{equation}\label{enHsimoto}
 i\partial_t \Psi+\partial^2_x \Psi +\frac{1}{2}\left(|\Psi(t,x)|^2+  A(t)\right)\Psi = 0,
\end{equation}
where $A(t)$ is a real function which can be removed by means of an integrating factor. The inverse transformation, when it
is well defined and in particular when $\Psi$ does not vanish,  allows to pass from the focusing cubic  Schr\"odinger equation to 
the binormal curvature flow equation.  

There is also a link between \eqref{enBF} and a class of particular solutions for the three-dimensional Gross-Pitaevskii equation (see \cite{BF_GP_3D})
$$i\partial_t \Psi+\Delta \Psi +(1-|\Psi|^2)\Psi = 0.$$ 
Such solutions behave like vortex filaments located along tubular neighborhoods of closed curves. It is expected in this case that the asymptotic limit 
of tubular evolution is given by the binormal curvature flow in the asymptotic limit where the section of the tubes tends to zero.
More specifically, if $\epsilon>0$ is a small parameter denoting the radius of a section of a tubular neighborhood, 
then it is expecting that the motion  of the curved axis of the tube is animated  by the equation (see for example  \cite{BF_GP_3D, LIA_S_J})
$$
\partial_t \gamma = G \kappa \bf B,
$$  
where 
$$G = \frac{\Gamma}{4\pi}\left(\log\left(\frac{1}{\epsilon}\right)+ O(1)\right),$$ 
and $\Gamma$ denotes the vortex intensity of the tube. 
\begin{figure}[h]
 \centering
 \scalebox{0.75} 
{
\begin{pspicture}(0,-2.61)(9.56,2.61)
\pscustom[linewidth=0.02]
{
\newpath
\moveto(0.28,-0.25)
\lineto(0.84,0.16)
\curveto(1.12,0.365)(1.69,0.675)(1.98,0.78)
\curveto(2.27,0.885)(2.925,1.005)(3.29,1.02)
\curveto(3.655,1.035)(4.415,1.09)(4.81,1.13)
\curveto(5.205,1.17)(5.885,1.31)(6.17,1.41)
\curveto(6.455,1.51)(6.88,1.765)(7.02,1.92)
\curveto(7.16,2.075)(7.315,2.245)(7.36,2.29)
}
\psbezier[linewidth=0.02](0.2,-0.43)(0.2,-1.23)(2.18,-2.59)(2.18,-1.79)(2.18,-0.99)(0.2,0.37)(0.2,-0.43)
\pscustom[linewidth=0.02]
{
\newpath
\moveto(2.12,-2.03)
\lineto(2.16,-2.01)
\curveto(2.18,-2.0)(2.22,-1.975)(2.24,-1.96)
\curveto(2.26,-1.945)(2.305,-1.91)(2.33,-1.89)
\curveto(2.355,-1.87)(2.405,-1.835)(2.43,-1.82)
\curveto(2.455,-1.805)(2.535,-1.755)(2.59,-1.72)
\curveto(2.645,-1.685)(2.77,-1.605)(2.84,-1.56)
\curveto(2.91,-1.515)(3.06,-1.435)(3.14,-1.4)
\curveto(3.22,-1.365)(3.375,-1.3)(3.45,-1.27)
\curveto(3.525,-1.24)(3.7,-1.185)(3.8,-1.16)
\curveto(3.9,-1.135)(4.085,-1.09)(4.17,-1.07)
\curveto(4.255,-1.05)(4.45,-1.015)(4.56,-1.0)
\curveto(4.67,-0.985)(4.895,-0.96)(5.01,-0.95)
\curveto(5.125,-0.94)(5.35,-0.915)(5.46,-0.9)
\curveto(5.57,-0.885)(5.805,-0.845)(5.93,-0.82)
\curveto(6.055,-0.795)(6.325,-0.725)(6.47,-0.68)
\curveto(6.615,-0.635)(6.925,-0.525)(7.09,-0.46)
\curveto(7.255,-0.395)(7.595,-0.215)(7.77,-0.1)
\curveto(7.945,0.015)(8.235,0.255)(8.35,0.38)
\curveto(8.465,0.505)(8.6,0.7)(8.62,0.77)
\curveto(8.64,0.84)(8.675,0.93)(8.69,0.95)
}
\pscustom[linewidth=0.02]
{
\newpath
\moveto(8.68,0.87)
\lineto(8.69,0.91)
\curveto(8.695,0.93)(8.705,0.97)(8.71,0.99)
\curveto(8.715,1.01)(8.72,1.055)(8.72,1.08)
\curveto(8.72,1.105)(8.72,1.155)(8.72,1.18)
\curveto(8.72,1.205)(8.725,1.255)(8.73,1.28)
\curveto(8.735,1.305)(8.745,1.355)(8.75,1.38)
\curveto(8.755,1.405)(8.755,1.45)(8.75,1.47)
\curveto(8.745,1.49)(8.735,1.53)(8.73,1.55)
\curveto(8.725,1.57)(8.72,1.615)(8.72,1.64)
\curveto(8.72,1.665)(8.71,1.71)(8.7,1.73)
\curveto(8.69,1.75)(8.67,1.795)(8.66,1.82)
\curveto(8.65,1.845)(8.635,1.89)(8.63,1.91)
\curveto(8.625,1.93)(8.605,1.965)(8.59,1.98)
\curveto(8.575,1.995)(8.555,2.03)(8.55,2.05)
\curveto(8.545,2.07)(8.53,2.105)(8.52,2.12)
\curveto(8.51,2.135)(8.49,2.17)(8.48,2.19)
\curveto(8.47,2.21)(8.45,2.245)(8.44,2.26)
\curveto(8.43,2.275)(8.4,2.295)(8.38,2.3)
\curveto(8.36,2.305)(8.33,2.325)(8.32,2.34)
\curveto(8.31,2.355)(8.28,2.38)(8.26,2.39)
\curveto(8.24,2.4)(8.2,2.415)(8.18,2.42)
\curveto(8.16,2.425)(8.12,2.435)(8.1,2.44)
\curveto(8.08,2.445)(8.045,2.46)(8.03,2.47)
\curveto(8.015,2.48)(7.98,2.495)(7.96,2.5)
\curveto(7.94,2.505)(7.895,2.515)(7.87,2.52)
\curveto(7.845,2.525)(7.8,2.525)(7.78,2.52)
\curveto(7.76,2.515)(7.715,2.51)(7.69,2.51)
\curveto(7.665,2.51)(7.63,2.495)(7.62,2.48)
\curveto(7.61,2.465)(7.585,2.44)(7.57,2.43)
\curveto(7.555,2.42)(7.52,2.405)(7.5,2.4)
\curveto(7.48,2.395)(7.455,2.38)(7.45,2.37)
\curveto(7.445,2.36)(7.43,2.335)(7.42,2.32)
\curveto(7.41,2.305)(7.395,2.29)(7.38,2.29)
}
\pscustom[linewidth=0.02,linestyle=dashed,dash=0.16cm 0.16cm]
{
\newpath
\moveto(8.66,0.85)
\lineto(8.62,0.83)
\curveto(8.6,0.82)(8.56,0.8)(8.54,0.79)
\curveto(8.52,0.78)(8.49,0.755)(8.48,0.74)
\curveto(8.47,0.725)(8.44,0.705)(8.42,0.7)
\curveto(8.4,0.695)(8.355,0.69)(8.33,0.69)
\curveto(8.305,0.69)(8.26,0.685)(8.24,0.68)
\curveto(8.22,0.675)(8.175,0.67)(8.15,0.67)
\curveto(8.125,0.67)(8.08,0.675)(8.06,0.68)
\curveto(8.04,0.685)(8.0,0.7)(7.98,0.71)
\curveto(7.96,0.72)(7.915,0.74)(7.89,0.75)
\curveto(7.865,0.76)(7.82,0.775)(7.8,0.78)
\curveto(7.78,0.785)(7.745,0.8)(7.73,0.81)
\curveto(7.715,0.82)(7.685,0.84)(7.67,0.85)
\curveto(7.655,0.86)(7.625,0.885)(7.61,0.9)
\curveto(7.595,0.915)(7.57,0.945)(7.56,0.96)
\curveto(7.55,0.975)(7.525,1.01)(7.51,1.03)
\curveto(7.495,1.05)(7.475,1.09)(7.47,1.11)
\curveto(7.465,1.13)(7.455,1.17)(7.45,1.19)
\curveto(7.445,1.21)(7.43,1.245)(7.42,1.26)
\curveto(7.41,1.275)(7.39,1.31)(7.38,1.33)
\curveto(7.37,1.35)(7.355,1.39)(7.35,1.41)
\curveto(7.345,1.43)(7.34,1.475)(7.34,1.5)
\curveto(7.34,1.525)(7.34,1.575)(7.34,1.6)
\curveto(7.34,1.625)(7.345,1.67)(7.35,1.69)
\curveto(7.355,1.71)(7.36,1.755)(7.36,1.78)
\curveto(7.36,1.805)(7.36,1.855)(7.36,1.88)
\curveto(7.36,1.905)(7.365,1.95)(7.37,1.97)
\curveto(7.375,1.99)(7.385,2.03)(7.39,2.05)
\curveto(7.395,2.07)(7.405,2.115)(7.41,2.14)
\curveto(7.415,2.165)(7.425,2.215)(7.43,2.24)
\curveto(7.435,2.265)(7.45,2.305)(7.46,2.32)
\curveto(7.47,2.335)(7.49,2.365)(7.5,2.38)
}
\pscustom[linewidth=0.02]
{
\newpath
\moveto(4.24,-1.07)
\lineto(4.26,-1.04)
\curveto(4.27,-1.025)(4.295,-1.0)(4.31,-0.99)
\curveto(4.325,-0.98)(4.365,-0.97)(4.39,-0.97)
\curveto(4.415,-0.97)(4.45,-0.955)(4.46,-0.94)
\curveto(4.47,-0.925)(4.5,-0.895)(4.52,-0.88)
\curveto(4.54,-0.865)(4.57,-0.835)(4.58,-0.82)
\curveto(4.59,-0.805)(4.61,-0.77)(4.62,-0.75)
\curveto(4.63,-0.73)(4.645,-0.69)(4.65,-0.67)
\curveto(4.655,-0.65)(4.665,-0.605)(4.67,-0.58)
\curveto(4.675,-0.555)(4.68,-0.505)(4.68,-0.48)
\curveto(4.68,-0.455)(4.68,-0.405)(4.68,-0.38)
\curveto(4.68,-0.355)(4.675,-0.31)(4.67,-0.29)
\curveto(4.665,-0.27)(4.655,-0.225)(4.65,-0.2)
\curveto(4.645,-0.175)(4.635,-0.13)(4.63,-0.11)
\curveto(4.625,-0.09)(4.615,-0.05)(4.61,-0.03)
\curveto(4.605,-0.01)(4.59,0.025)(4.58,0.04)
\curveto(4.57,0.055)(4.55,0.085)(4.54,0.1)
\curveto(4.53,0.115)(4.515,0.15)(4.51,0.17)
\curveto(4.505,0.19)(4.485,0.225)(4.47,0.24)
\curveto(4.455,0.255)(4.425,0.29)(4.41,0.31)
\curveto(4.395,0.33)(4.365,0.365)(4.35,0.38)
\curveto(4.335,0.395)(4.31,0.425)(4.3,0.44)
\curveto(4.29,0.455)(4.26,0.49)(4.24,0.51)
\curveto(4.22,0.53)(4.19,0.575)(4.18,0.6)
\curveto(4.17,0.625)(4.14,0.67)(4.12,0.69)
\curveto(4.1,0.71)(4.065,0.74)(4.05,0.75)
\curveto(4.035,0.76)(4.005,0.78)(3.99,0.79)
\curveto(3.975,0.8)(3.94,0.825)(3.92,0.84)
\curveto(3.9,0.855)(3.86,0.875)(3.84,0.88)
\curveto(3.82,0.885)(3.78,0.895)(3.76,0.9)
\curveto(3.74,0.905)(3.695,0.925)(3.67,0.94)
\curveto(3.645,0.955)(3.6,0.975)(3.58,0.98)
\curveto(3.56,0.985)(3.52,0.995)(3.5,1.0)
\curveto(3.48,1.005)(3.44,1.015)(3.42,1.02)
}
\pscustom[linewidth=0.02,linestyle=dashed,dash=0.16cm 0.16cm]
{
\newpath
\moveto(3.42,1.01)
\lineto(3.37,1.01)
\curveto(3.345,1.01)(3.305,0.995)(3.29,0.98)
\curveto(3.275,0.965)(3.245,0.94)(3.23,0.93)
\curveto(3.215,0.92)(3.175,0.9)(3.15,0.89)
\curveto(3.125,0.88)(3.085,0.855)(3.07,0.84)
\curveto(3.055,0.825)(3.025,0.795)(3.01,0.78)
\curveto(2.995,0.765)(2.965,0.735)(2.95,0.72)
\curveto(2.935,0.705)(2.91,0.665)(2.9,0.64)
\curveto(2.89,0.615)(2.87,0.57)(2.86,0.55)
\curveto(2.85,0.53)(2.83,0.485)(2.82,0.46)
\curveto(2.81,0.435)(2.805,0.39)(2.81,0.37)
\curveto(2.815,0.35)(2.825,0.31)(2.83,0.29)
\curveto(2.835,0.27)(2.845,0.23)(2.85,0.21)
\curveto(2.855,0.19)(2.87,0.145)(2.88,0.12)
\curveto(2.89,0.095)(2.905,0.045)(2.91,0.02)
\curveto(2.915,-0.005)(2.925,-0.055)(2.93,-0.08)
\curveto(2.935,-0.105)(2.955,-0.15)(2.97,-0.17)
\curveto(2.985,-0.19)(3.015,-0.23)(3.03,-0.25)
\curveto(3.045,-0.27)(3.065,-0.315)(3.07,-0.34)
\curveto(3.075,-0.365)(3.095,-0.405)(3.11,-0.42)
\curveto(3.125,-0.435)(3.15,-0.465)(3.16,-0.48)
\curveto(3.17,-0.495)(3.195,-0.525)(3.21,-0.54)
\curveto(3.225,-0.555)(3.26,-0.585)(3.28,-0.6)
\curveto(3.3,-0.615)(3.34,-0.645)(3.36,-0.66)
\curveto(3.38,-0.675)(3.43,-0.695)(3.46,-0.7)
\curveto(3.49,-0.705)(3.535,-0.72)(3.55,-0.73)
\curveto(3.565,-0.74)(3.595,-0.76)(3.61,-0.77)
\curveto(3.625,-0.78)(3.655,-0.8)(3.67,-0.81)
\curveto(3.685,-0.82)(3.72,-0.835)(3.74,-0.84)
\curveto(3.76,-0.845)(3.795,-0.86)(3.81,-0.87)
\curveto(3.825,-0.88)(3.86,-0.895)(3.88,-0.9)
\curveto(3.9,-0.905)(3.93,-0.925)(3.94,-0.94)
\curveto(3.95,-0.955)(3.975,-0.98)(3.99,-0.99)
\curveto(4.005,-1.0)(4.045,-1.01)(4.07,-1.01)
\curveto(4.095,-1.01)(4.145,-1.01)(4.17,-1.01)
\curveto(4.195,-1.01)(4.24,-1.005)(4.26,-1.0)
\curveto(4.28,-0.995)(4.31,-0.99)(4.34,-0.99)
}
\pscustom[linewidth=0.02,linestyle=dashed,dash=0.16cm 0.16cm]
{
\newpath
\moveto(1.22,-1.11)
\lineto(1.23,-1.09)
\curveto(1.235,-1.08)(1.265,-1.05)(1.29,-1.03)
\curveto(1.315,-1.01)(1.38,-0.965)(1.42,-0.94)
\curveto(1.46,-0.915)(1.53,-0.865)(1.56,-0.84)
\curveto(1.59,-0.815)(1.665,-0.76)(1.71,-0.73)
\curveto(1.755,-0.7)(1.835,-0.645)(1.87,-0.62)
\curveto(1.905,-0.595)(1.975,-0.55)(2.01,-0.53)
\curveto(2.045,-0.51)(2.12,-0.47)(2.16,-0.45)
\curveto(2.2,-0.43)(2.28,-0.385)(2.32,-0.36)
\curveto(2.36,-0.335)(2.44,-0.295)(2.48,-0.28)
\curveto(2.52,-0.265)(2.605,-0.235)(2.65,-0.22)
\curveto(2.695,-0.205)(2.78,-0.18)(2.82,-0.17)
\curveto(2.86,-0.16)(2.955,-0.135)(3.01,-0.12)
\curveto(3.065,-0.105)(3.17,-0.08)(3.22,-0.07)
\curveto(3.27,-0.06)(3.365,-0.04)(3.41,-0.03)
\curveto(3.455,-0.02)(3.535,-0.005)(3.57,0.0)
\curveto(3.605,0.005)(3.685,0.01)(3.73,0.01)
\curveto(3.775,0.01)(3.865,0.015)(3.91,0.02)
\curveto(3.955,0.025)(4.035,0.03)(4.07,0.03)
\curveto(4.105,0.03)(4.18,0.03)(4.22,0.03)
\curveto(4.26,0.03)(4.325,0.03)(4.35,0.03)
\curveto(4.375,0.03)(4.43,0.03)(4.46,0.03)
\curveto(4.49,0.03)(4.565,0.03)(4.61,0.03)
\curveto(4.655,0.03)(4.74,0.035)(4.78,0.04)
\curveto(4.82,0.045)(4.905,0.06)(4.95,0.07)
\curveto(4.995,0.08)(5.08,0.095)(5.12,0.1)
\curveto(5.16,0.105)(5.255,0.12)(5.31,0.13)
\curveto(5.365,0.14)(5.445,0.155)(5.47,0.16)
\curveto(5.495,0.165)(5.565,0.185)(5.61,0.2)
\curveto(5.655,0.215)(5.74,0.245)(5.78,0.26)
\curveto(5.82,0.275)(5.9,0.305)(5.94,0.32)
\curveto(5.98,0.335)(6.06,0.365)(6.1,0.38)
\curveto(6.14,0.395)(6.21,0.42)(6.24,0.43)
\curveto(6.27,0.44)(6.355,0.48)(6.41,0.51)
\curveto(6.465,0.54)(6.56,0.595)(6.6,0.62)
\curveto(6.64,0.645)(6.73,0.695)(6.78,0.72)
\curveto(6.83,0.745)(6.945,0.8)(7.01,0.83)
\curveto(7.075,0.86)(7.195,0.92)(7.25,0.95)
\curveto(7.305,0.98)(7.405,1.045)(7.45,1.08)
\curveto(7.495,1.115)(7.575,1.185)(7.61,1.22)
\curveto(7.645,1.255)(7.725,1.325)(7.77,1.36)
\curveto(7.815,1.395)(7.93,1.485)(8.0,1.54)
\curveto(8.07,1.595)(8.17,1.685)(8.2,1.72)
\curveto(8.23,1.755)(8.305,1.825)(8.35,1.86)
\curveto(8.395,1.895)(8.485,1.955)(8.53,1.98)
\curveto(8.575,2.005)(8.625,2.035)(8.64,2.05)
}
\psline[linewidth=0.02cm,arrowsize=0.05291667cm 2.0,arrowlength=1.4,arrowinset=0.4]{->}(8.58,2.03)(9.54,2.59)
\psline[linewidth=0.02cm,arrowsize=0.05291667cm 2.0,arrowlength=1.4,arrowinset=0.4]{->}(0.0,-2.17)(1.24,-1.09)
\psline[linewidth=0.02cm,linestyle=dashed,dash=0.16cm 0.16cm](3.64,-0.01)(3.28,0.97)
\psline[linewidth=0.02cm](1.22,-1.13)(0.3,-0.25)
\psline[linewidth=0.02cm,linestyle=dashed,dash=0.16cm 0.16cm](8.04,1.57)(7.5,2.37)
\usefont{T1}{ptm}{m}{n}
\rput(3.3,0.1){$\epsilon$} 
\usefont{T1}{ptm}{m}{n}
\rput(0.8,-1.1){$\epsilon$} 
\end{pspicture} 
}
\caption{Approximation (LIA).}
\end{figure}
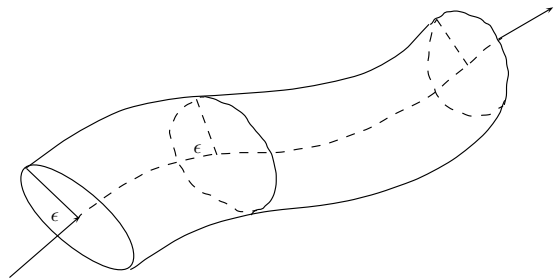

This asymptotic limit is  sometimes used for the study of the three-dimensional Euler equation, where it is called the local induction approximation (LIA). In this context, it
was formally derived by Da Rios \cite{Rios} in 1906 and rediscovered by Arms and Hama \cite{LIA_1} in 1965. Further analysis concerning the limitation of this model
were realized in \cite{LIA_2, LIA_3}.    

Using equation (\ref{enHsimoto}) (which is integrable), Hasimoto wrote an explicit formula for a solution of \eqref{enBF} that
 has constant torsion $\tau$ and a curvature $\kappa$ given by $\kappa(t,x) =2\nu \mathop{\rm sech}(\nu(x-2\tau t))$, where $\nu$ is a constant.
Another family of exact solutions for \eqref{enBF} were studied in \cite{GRV, Lak_Dan}.  Their curvature and torsion are given by
$$\kappa(t,x)= \frac{a}{\sqrt{t}}, \tau(t,x)= \frac{x}{2t},\quad t>0,$$ 
with $a\in \R$. In terms of $\gamma$, there exists a smooth function $G_a$ such that 
$$\gamma(t,x) = \sqrt{t}G_a(x/\sqrt{t}).$$ This solution satisfies formally at $t=0$ 
$$\gamma(0,x)= xA^-1_{]-\infty, 0[}+xA^+1_{]0,-\infty[},$$
where $A^\pm$ are two unit vectors in $\R^3$ and $\sin((\widehat{A^+, A^-})/2) = \exp(-\pi a^2/2)$. Hence $\gamma(0,.)$ is an infinite broken line. 
Rigorous studies of the behavior of these solutions and 
some of their perturbations were realized in \cite{Bca_Vga2, Bca_Vga1, Bca_Vga4} and numerical simulations in \cite{ Numerical_LIA2, Numerical_LIA1}.
Existence and stability properties for weak solutions of \eqref{enBF} corresponding to closed curves with corners were studied in \cite{LIA_5, LIA_S_J}.

Our goal in this paper is to prove existence results for the Cauchy problem associated to a generalization of equation \eqref{enBF}.  
More specifically, while in the formulation \eqref{enBF} the velocity is proportional to the  curvature with an identical coefficient at every point of the curve,
We generalize (\ref{enBF}) by assuming that the proportionality coefficient depends on $t$, on the arc-length parameter $x$, and possibly on
the position of the point in the space $\gamma(t,x)$. In other words,
 \begin{equation}\label{envit_corub_modif}
 \partial_t\gamma = g\kappa B,
\end{equation} 
with $g=g(t,x,\gamma(t,x))$, or equivalently, 
\begin{equation}\label{envit_corub_2}
 \partial_t\gamma = g\partial_x \gamma \wedge \partial^2_x\gamma.
\end{equation}
Note that this equation preserves the arc-length, as does the equation \eqref{enBF}, since   
$$
\partial_t \left| \partial_x\gamma\right|^2 = 2 \partial_x \gamma \cdot \partial_x\partial_t \gamma = 2 \partial_x
\gamma \cdot \bigl( g \partial_x \gamma \wedge \partial^3_x \gamma + g \partial^2_x\gamma\wedge \partial^2_x\gamma + 
(\partial_x g + \nabla_\gamma g \cdot \partial_x \gamma) \partial_x \gamma \wedge \partial^2_x \gamma \bigr) = 0.
$$
To simplify the statements of our results, we assume that $g$ is smooth, that all of its derivatives are bounded, and that there exists some positive constant $\alpha$ such that $g\geq \alpha.$  

When $g= g(t,x)$ does not depend on $\gamma$, by letting $u = \partial_x\gamma$
and deriving (\ref{envit_corub_2}) with respect to $x$ we get, at least formally,  
\begin{equation}\label{LIA}
\partial_t u=\partial_x\left(u\wedge g\partial_x u\right)=u\wedge\Delta_g(u),
\end{equation}
where $\Delta_g(u) \equiv \partial_x\left(g(x)\partial_x u\right)$. Note that if $g$ depends explicitly on $\gamma$, equation (\ref{LIA}) is no longer formed by $u$. 
We will use this last equation together with Lemma \ref{FCB_LIA} below, to solve the Cauchy problem associated to \eqref{envit_corub_2}.
\begin{lemma}\label{FCB_LIA}
Let $I$ be an interval containing zero. Assume that $g\equiv g(t,x).$ Let $u \in L^\infty(I,H^1_{loc}(\mathbb{R},S^2))$  
be a weak solution of (\ref{LIA}) such that $u \in L^\infty(I, L^2(\R))$ , and let $\Gamma_u$ be
the function defined by
\begin{equation}\label{enGamma}
\Gamma_u(t,x)=\int_0^x u(t,z)dz \quad \text{for all}\quad (t,x)\in I\times\mathbb{R} .
\end{equation}
Then there exists a unique continuous function $c_u: I\rightarrow \mathbb{R}^3$ satisfying $c_u(0)= 0$  
such that the function $\gamma_u\in L^\infty(I,H^2_{loc}(\mathbb{R},\mathbb{R}^3)),$ defined by
$$\gamma_u(t,x)= \Gamma_u(t,x)+c_u(t),$$
is a solution of  (\ref{envit_corub_2}) on $I\times \mathbb{R}.$
\end{lemma}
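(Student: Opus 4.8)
The plan is to integrate the solution $u$ of \eqref{LIA} in the space variable and to absorb the resulting $x$-independent ``constant of integration'' into the time-dependent shift $c_u(t)$. Since $\partial_x\Gamma_u=u$, the candidate $\gamma_u=\Gamma_u+c_u(t)$ satisfies $\partial_x\gamma_u=u$ and $\partial_x^2\gamma_u=\partial_x u$, so the right-hand side of \eqref{envit_corub_2} reduces exactly to the flux $F:=g\,u\wedge\partial_x u=u\wedge g\partial_x u$ appearing in \eqref{LIA}. Hence proving that $\gamma_u$ solves \eqref{envit_corub_2} amounts to proving $\partial_t\gamma_u=F$, and the whole point is that \eqref{LIA} reads precisely $\partial_t u=\partial_x F$.

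First I would record the available regularity: $u\in L^\infty(I,H^1_{loc})$ with $|u|=1$ and $g$ bounded give $F\in L^\infty(I,L^2_{loc})$, hence $\partial_t u=\partial_x F\in L^\infty(I,H^{-1}_{loc})$. On every bounded subinterval this places $u$ in the Lions--Magenes setting ($u\in L^\infty\subset L^2$ with values in $H^1$, and $\partial_t u$ with values in $H^{-1}$), so after modification on a null set $u\in C(I,L^2_{loc})$. Consequently $\Gamma_u(t,x)=\int_0^x u(t,z)\,dz$ is continuous in $t$ with values in $C^0_{loc}$ and lies in $L^\infty(I,H^2_{loc})$, which already yields the claimed regularity of $\gamma_u$.

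The key step is to differentiate $h:=F-\partial_t\Gamma_u$ in $x$, in the sense of distributions on $I\times\R$. Interchanging derivatives gives $\partial_x\partial_t\Gamma_u=\partial_t\partial_x\Gamma_u=\partial_t u$, while the weak form of \eqref{LIA} gives $\partial_x F=\partial_t u$; hence $\partial_x h=\partial_x F-\partial_t u=0$. A distribution on $I\times\R$ with vanishing $x$-derivative is independent of $x$, so $h=h(t)$ depends on $t$ alone. I would then set
$$c_u(t):=\int_0^t\!\!\int_0^1 F(s,x)\,dx\,ds-\int_0^1\Gamma_u(t,x)\,dx+\int_0^1\Gamma_u(0,x)\,dx,$$
which satisfies $c_u(0)=0$, is continuous by the time-continuity established above, and whose distributional derivative is $c_u'(t)=\int_0^1 F(t,x)\,dx-\partial_t\int_0^1\Gamma_u(t,x)\,dx=\int_0^1 h(t)\,dx=h(t)$. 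Therefore $\partial_t\gamma_u=\partial_t\Gamma_u+c_u'=\partial_t\Gamma_u+h=F$, which is exactly \eqref{envit_corub_2}. Uniqueness is then immediate: if $\Gamma_u+c_u$ and $\Gamma_u+\tilde c_u$ both solve \eqref{envit_corub_2}, then $c_u-\tilde c_u$ has vanishing time derivative and vanishes at $t=0$, so it is identically zero.

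The main obstacle I anticipate is not the algebra, which collapses the moment one observes that \eqref{LIA} is the $x$-derivative of \eqref{envit_corub_2}, but the low regularity: one must justify that $\partial_t\Gamma_u$ and $F$ are comparable distributional objects, that $\partial_x h=0$ may legitimately be read off, and, above all, that the integration constant $c_u$ can be chosen \emph{continuous}. This last point is exactly where the time-continuity $u\in C(I,L^2_{loc})$ (hence $\Gamma_u\in C(I,C^0_{loc})$) enters, and extracting it from the mere weak formulation through the Lions--Magenes lemma is the technical heart of the argument.
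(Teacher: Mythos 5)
Your proof is correct and follows essentially the same route as the paper: you show that $h=F-\partial_t\Gamma_u$ (the paper's $\partial_t a$) has vanishing $x$-derivative by the weak form of \eqref{LIA}, define $c_u$ by averaging this $x$-independent quantity against a mass-one weight in $x$ (you use $1_{[0,1]}$ where the paper uses a smooth $\chi\in\mathcal{D}(\mathbb{R})$ with $\int\chi=1$), and conclude uniqueness from $\tilde c_u'=c_u'$ together with $\tilde c_u(0)=0$. The only cosmetic difference is that you obtain the continuity of $c_u$ via a Lions--Magenes-type argument giving $u\in C(I,L^2_{loc})$, whereas the paper reads it off directly from $u\in W^{1,\infty}(I,H^{-1}(\mathbb{R}))$; both are adequate.
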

A similar result was proved in \cite{LIA_S_J}, when $u(t,.)$ is $l-$periodic.  

The case $g \equiv 1$ in \eqref{LIA} corresponds to the well-known equation for Schr\"{o}dinger maps with values in $S^2$ 
($ | u | = 1 $ is due to the arc-length parameterization) 
\begin{equation}\label{envit_corub_4}
\partial_tu =  u \wedge \partial^2_xu.
\end{equation}
The corresponding Cauchy problem was studied by Guo and Zhou \cite{Zhou_Guo} in 1984  when $u(t,.)$ is defined on some segment of  $\mathbb{R}$,  
and by Bardos, Sulem and Sulem \cite{LIA_4} in 1986 when $u(t,.)$ is defined on $\mathbb{R}^N$ ($N \geq 1$). 
They have proved that if the initial data has its gradient in $L^2$, then \eqref{envit_corub_4} has a global weak solution in $L^\infty(H^1_{loc}).$ 
Our approach is similar to theirs an uses a semi-discrete scheme  (discrete in space and continuous in time), some particular bounds on the discrete problems 
and a compactness argument. In this way, we prove the
\begin{theorem}\label{th_ex_np}
Let $u_0: \mathbb{R}\rightarrow S^2$ be such that $\frac{du_0}{dx}\in L^2(\mathbb{R}),$ $T>0$ and let 
$g\in W^{1,\infty}(\R^+,L^\infty(\mathbb{R}))$ be such that  $g\geq\alpha$ for some constant $\alpha>0$. Then equation (\ref{LIA}) has
a solution  $u \in L^\infty(0,T,H_{loc}^1(\mathbb{R},S^2))$ with $u(0,.)=u_0$ and $\partial_xu \in L^\infty(0,T,L^2(\mathbb{R}))$. 
Moreover, if $g=g(x),$ then $\partial_xu \in L^\infty(\R^+,L^2(\mathbb{R}))$.
\end{theorem}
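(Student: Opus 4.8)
The plan is to follow the semi-discrete strategy announced above: discretize (\ref{LIA}) in space while keeping time continuous, derive bounds that are uniform in the mesh size, and pass to the limit by compactness. Fix a mesh size $h>0$, write $u_j(t)\approx u(t,jh)$ for $j\in\Z$, and discretize $\Delta_g$ in divergence form,
\begin{equation*}
(\Delta_g u)_j = \frac{1}{h^2}\bigl( g_{j+1/2}(u_{j+1}-u_j) - g_{j-1/2}(u_j-u_{j-1})\bigr),\qquad g_{j+1/2}=g\bigl(t,(j+\tfrac12)h\bigr).
\end{equation*}
I would then consider the infinite ODE system $\dot u_j = u_j\wedge(\Delta_g u)_j$ with $u_j(0)=u_0(jh)$ (pointwise sampling is legitimate since $u_0\in H^1_{loc}$ is continuous). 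The right-hand side is locally Lipschitz on $\ell^\infty$, so Cauchy--Lipschitz yields a local solution; since $\frac{d}{dt}|u_j|^2 = 2u_j\cdot(u_j\wedge(\Delta_g u)_j)=0$, each $|u_j|\equiv1$, the trajectory stays bounded, and the solution is global.

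The heart of the argument is a discrete energy estimate. Setting $D_j=u_{j+1}-u_j$ and $E_h(t)=\frac{1}{2h}\sum_j g_{j+1/2}|D_j|^2$, a discrete summation by parts recasts the cross-difference term as $-h\sum_j(\Delta_g u)_j\cdot\dot u_j$, which vanishes identically because $(\Delta_g u)_j\cdot\bigl(u_j\wedge(\Delta_g u)_j\bigr)=0$; this is the discrete image of the continuous identity $a\cdot(u\wedge a)=0$. The only surviving contribution comes from the time-dependence of the weight:
\begin{equation*}
\frac{dE_h}{dt} = \frac{1}{2h}\sum_j (\partial_t g_{j+1/2})\,|D_j|^2 \le \frac{\|\partial_t g\|_{L^\infty}}{2\alpha}\cdot\frac{1}{h}\sum_j g_{j+1/2}|D_j|^2 = \frac{\|\partial_t g\|_{L^\infty}}{\alpha}\,E_h(t),
\end{equation*}
where I used $g\ge\alpha$ to bound the unweighted sum by the weighted one. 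Grönwall's inequality gives $E_h(t)\le E_h(0)\,e^{\|\partial_t g\|_{L^\infty}t/\alpha}$ uniformly in $h$, and $E_h(0)\le\frac12\|g\|_{L^\infty}\|u_0'\|_{L^2}^2$ since $\frac1h\sum_j|D_j(0)|^2\le\|u_0'\|_{L^2}^2$ by Cauchy--Schwarz. Thus the discrete gradient $\frac1h\sum_j|D_j(t)|^2$ is bounded uniformly in $h$ on $[0,T]$. When $g=g(x)$ the right-hand side vanishes, $E_h$ is exactly conserved, and the bound holds for all $t\ge0$, which is exactly the source of the final assertion.

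Next I would build a piecewise-linear interpolant $u^h(t,\cdot)$ from the grid values. The constraint gives $\|u^h\|_{L^\infty}\le1$, and the energy bound gives a uniform bound on $\|\partial_x u^h\|_{L^\infty(0,T;L^2)}$, hence on $\|u^h\|_{L^\infty(0,T;H^1_{loc})}$; writing the scheme in conservative form $\partial_t u^h\approx\partial_x(u^h\wedge g\,\partial_x u^h)$ and using $\|u^h\wedge g\,\partial_x u^h\|_{L^2}\le\|g\|_{L^\infty}\|\partial_x u^h\|_{L^2}$ bounds $\partial_t u^h$ uniformly in $L^\infty(0,T;H^{-1}_{loc})$. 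By the Aubin--Lions--Simon lemma $(u^h)$ is relatively compact in $C([0,T];L^2_{loc})$, so along a subsequence $u^h\to u$ strongly in $L^2_{loc}$ while $\partial_x u^h\rightharpoonup\partial_x u$ weakly in $L^2_{loc}$; strong-times-weak convergence passes the nonlinearity $u^h\wedge g\,\partial_x u^h$ to $u\wedge g\,\partial_x u$, so $u$ solves (\ref{LIA}) weakly with $u(0,\cdot)=u_0$. The unit-length constraint survives because on the $j$-th cell $|u^h|^2=1-s(1-s)|D_j|^2$ with $s\in[0,1]$, so $\int_K(1-|u^h|^2)\le\frac{h^2}{6}\cdot\frac1h\sum_j|D_j|^2\to0$ on every compact $K$, forcing $|u|=1$ a.e.

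The main obstacle is selecting a space discretization of $\Delta_g$ that preserves the geometric cancellation making the energy estimate work: the divergence-form scheme above is chosen precisely so that discrete summation by parts reproduces $a\cdot(u\wedge a)=0$ with $a=\Delta_g u$, isolating the $\partial_t g$ term as the only uncontrolled quantity, which the hypotheses $g\in W^{1,\infty}$ and $g\ge\alpha$ render harmless through Grönwall. The remaining care is routine: establishing the uniform $H^{-1}$ bound on $\partial_t u^h$ in the interpolated rather than grid variables, and checking that the initial datum and the constraint $|u|=1$ are inherited by the weak limit.
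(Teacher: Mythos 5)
Your proposal is correct and follows essentially the same route as the paper: a divergence-form semi-discretization solved by Cauchy--Lipschitz, pointwise conservation of $|u_j|=1$, a weighted discrete energy whose only non-vanishing contribution is the $\partial_t g$ term controlled by Gr\"{o}nwall (exactly conserved when $g=g(x)$, giving the global-in-time bound), followed by interpolation, uniform $H^1_{loc}$ and $H^{-1}$ bounds, Aubin--Lions--Simon compactness, and a strong-times-weak passage to the limit in the nonlinearity. The only differences are cosmetic (sampling $g$ at half-points, solving the ODE on $\ell^\infty$ rather than on the paper's space $E_h$ of bounded sequences with square-summable differences, and your explicit cell-wise computation forcing $|u|=1$ in the limit), none of which changes the argument.
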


Lemma \ref{FCB_LIA} allows to translate this result into a result for \eqref{envit_corub_2}. A similar result holds for periodic solutions in $x$.
Specifically,
\begin{theorem}\label{th_ex_p}
Let $l>0$, $T>0$ and $\mathbb{T}^l:=\mathbb{R}/l\mathbb{Z}.$ Let $u_0\in H^1(\mathbb{T}^l, S^2)$ and let 
$g\in W^{1,\infty}(\R^+,L^\infty(\mathbb{T}^l))$ be given such that there exists $\alpha>0$ with $g\geq\alpha.$ 
Then the equation (\ref{LIA}) has a solution  $u \in L^\infty(0,T,H^1(\mathbb{T}^l,S^2))$ with $u(0,.)=u_0.$
Moreover, if $g=g(x),$ then $u\in L^\infty(\R^+,H^1(\mathbb{T}^l,S^2)).$
\end{theorem}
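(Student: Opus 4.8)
The plan is to adapt the semi-discrete scheme announced for Theorem \ref{th_ex_np} to the compact setting of the torus, where it becomes especially transparent. First I would fix $N\in\N$, set $h=l/N$ and $x_j=jh$, and look for an approximate solution $t\mapsto(u_j(t))_{j\in\Z/N\Z}$, with each $u_j(t)\in S^2$, solving the finite system of ODEs
\begin{equation*}
\frac{d u_j}{dt}= u_j\wedge (\Delta_g^h u)_j,\qquad (\Delta_g^h u)_j:=\frac{1}{h^2}\bigl(g_{j+1/2}(u_{j+1}-u_j)-g_{j-1/2}(u_j-u_{j-1})\bigr),
\end{equation*}
where $g_{j+1/2}:=g(t,x_{j+1/2})$, indices are taken modulo $N$, and $u_j(0):=u_0(x_j)$ (which makes sense since $H^1(\mathbb{T}^l)\hookrightarrow C(\mathbb{T}^l)$ in one dimension). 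Because the right-hand side is smooth in $(u_j)$ and is a cross product with $u_j$, it is tangent to $S^2$ at each node, so $\frac{d}{dt}|u_j|^2=2u_j\cdot(u_j\wedge(\Delta_g^h u)_j)=0$; the constraint $|u_j|\equiv1$ is preserved, the phase space $(S^2)^N$ is compact, and the system therefore admits a unique global solution.

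The second step is the uniform energy estimate, which is where the algebraic structure of the scheme matters. I would introduce the discrete $H^1$ energy $E_h(t)=\frac{h}{2}\sum_{j} g_{j+1/2}\,|(u_{j+1}-u_j)/h|^2$ and differentiate it in time. A discrete summation by parts (with no boundary terms, thanks to periodicity) turns the spatial contribution into $-h\sum_j (\Delta_g^h u)_j\cdot \partial_t u_j$, which vanishes identically since $\partial_t u_j=u_j\wedge(\Delta_g^h u)_j\perp(\Delta_g^h u)_j$; this is the discrete mirror of the continuous identity $\int \Delta_g u\cdot(u\wedge\Delta_g u)=0$. What survives is only the term coming from $\partial_t g$, which, after using $g\ge\alpha$, gives $\frac{d}{dt}E_h\le \frac{\|\partial_t g\|_{L^\infty}}{\alpha}E_h$. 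Grönwall's lemma then yields $E_h(t)\le E_h(0)\,e^{\|\partial_t g\|_\infty t/\alpha}$, a bound uniform in $N$ on $[0,T]$ since $E_h(0)\to\frac12\int g(0,\cdot)|\partial_x u_0|^2$ is controlled by $\|u_0\|_{H^1}$. When $g=g(x)$ the forcing term is absent, $E_h$ is exactly conserved, and the bound holds uniformly on all of $\R^+$, which produces the global-in-time conclusion.

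The final step is compactness and passage to the limit as $h\to0$. I would interpolate the nodal values into functions $u^h\in L^\infty(0,T;H^1(\mathbb{T}^l,\R^3))$, piecewise linear in $x$, so that the energy bound gives a uniform $L^\infty(0,T;H^1)$ bound and $|u^h|\to1$. Crucially, rewriting the equation in the divergence form $\partial_t u^h=\partial_x(u^h\wedge g\,\partial_x u^h)$ and noting that $u^h\wedge g\partial_x u^h$ is bounded in $L^2$ (since $|u^h|\le C$ and $\partial_x u^h$ is bounded in $L^2$) bounds $\partial_t u^h$ in $L^\infty(0,T;H^{-1}(\mathbb{T}^l))$. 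Because $\mathbb{T}^l$ is compact the embedding $H^1\hookrightarrow L^2$ is compact, so an Aubin–Lions argument furnishes a subsequence converging strongly in $C([0,T];L^2)$ and weakly-$*$ in $L^\infty(0,T;H^1)$ to a limit $u$. Strong $L^2$ convergence upgrades $|u^h|=1$ to $|u|=1$ a.e. and identifies $u(0,\cdot)=u_0$; combining the strong convergence of $u^h$ with the weak convergence of $\partial_x u^h$ lets the nonlinear product $u^h\wedge g\partial_x u^h$ pass to the limit against test functions, so $u$ solves \eqref{LIA} weakly.

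I expect the main obstacle to be exactly this last passage to the limit in the nonlinear term: one needs genuine strong compactness in time, not merely weak-$*$ bounds, to ensure $u^h\wedge g\partial_x u^h\to u\wedge g\partial_x u$ in the sense of distributions, and this hinges on the $H^{-1}$ bound on $\partial_t u^h$ together with the compact Rellich embedding. A secondary technical point is to design $\Delta_g^h$ so that the discrete summation by parts reproduces the continuous cancellation exactly, since any residual term would spoil the clean Grönwall estimate. The periodicity of $\mathbb{T}^l$ is what makes both points manageable, as it removes boundary terms in the summation by parts and supplies the compact embedding for free.
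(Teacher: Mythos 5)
Your proposal is correct and follows essentially the same route as the paper: a semi-discrete (space-discretized, time-continuous) scheme on the periodic grid, preservation of the pointwise constraint $|u_j|=1$ by the cross-product structure, a discrete energy estimate via periodic summation by parts and Gr\"onwall (with exact conservation when $g=g(x)$), an $H^{-1}(\mathbb{T}^l)$ bound on the time derivative coming from the discrete divergence form, and Aubin--Lions compactness to pass to the limit in the nonlinear term by combining strong $L^2$ and weak-$*$ $H^1$ convergence. The only deviations --- midpoint rather than nodal sampling of $g$, global solvability of the ODE system argued via compactness of $(S^2)^N$ rather than via the energy bound, and the fact that the piecewise-linear interpolant satisfies the divergence-form equation only up to interpolation errors, which the paper controls through Lemmas \ref{lem_born}, \ref{lem_cv_l2f} and \ref{lem_cv_h-1} --- are cosmetic.
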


Concerning the strong solutions, we will prove the following result. 
\begin{theorem}[Local well-posedness for regular solutions]\label{th_ex_npf}
Let  $u_0: \mathbb{R}\rightarrow S^2$ be such that $\frac{du_0}{dx} \in H^2(\mathbb{R}),$ and let $g\in W^{1,\infty}(\R^+,W^{3,\infty}(\mathbb{R})).$  
Assume that there exists $\alpha>0$ with $g\geq\alpha.$ Then there exists $T_1=T_1(g,u_0)>0$ such that equation (\ref{LIA}) has a unique solution
$u \in L^\infty(0,T_1,H_{loc}^3(\mathbb{R}))$ with $u(0,.)=u_0$ and $\partial_xu \in L^\infty(0,T_1,H^2(\mathbb{R}))$.
\end{theorem}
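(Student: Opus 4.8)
The plan is to construct the solution by a regularization--compactness scheme and to prove uniqueness by a Gronwall estimate on the difference, the whole difficulty being concentrated in a single \emph{a priori} estimate for the $H^3$--norm that must be uniform in the regularization parameter.

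\textbf{Regularized problem.} First I would regularize (\ref{LIA}) by a Landau--Lifshitz--Gilbert type term that preserves the constraint $|u|=1$: for $\epsilon>0$ consider
\begin{equation}
\partial_t u^\epsilon = u^\epsilon\wedge\Delta_g u^\epsilon - \epsilon\, u^\epsilon\wedge\bigl(u^\epsilon\wedge\Delta_g u^\epsilon\bigr).
\end{equation}
Since $u\wedge(u\wedge v)=(u\cdot v)u-v$ on $S^2$, the extra term equals $\epsilon\bigl(\Delta_g u^\epsilon-(u^\epsilon\cdot\Delta_g u^\epsilon)u^\epsilon\bigr)$, whose principal part $\epsilon g\,\partial_x^2 u^\epsilon$ is genuinely parabolic; both terms are orthogonal to $u^\epsilon$, so $\frac{d}{dt}|u^\epsilon|^2=2u^\epsilon\cdot\partial_t u^\epsilon=0$ and $u^\epsilon$ stays on $S^2$. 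For fixed $\epsilon$ this is a semilinear parabolic system, so a local smooth solution $u^\epsilon$ with $u^\epsilon(0,\cdot)=u_0$ exists by standard parabolic theory (a fixed point around the semigroup of $\epsilon\Delta_g$).

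\textbf{A priori estimate (the crux).} The heart of the matter is to bound $E(t):=\|\partial_x u^\epsilon(t)\|_{H^2}^2$ on an interval $[0,T_1]$ with $T_1$ and the bound independent of $\epsilon$. Differentiating the equation, testing $\partial_x^k$ against $\partial_x^k u^\epsilon$ for $k=1,2,3$ and using $|u^\epsilon|=1$, most top--order contributions vanish because $(a\wedge b)\cdot b=0$ and because $v\mapsto u\wedge v$ is skew--symmetric; the parabolic term contributes the good sign $-c\,\epsilon\|\partial_x^{k+1}u^\epsilon\|_{L^2}^2$. The genuine difficulty, and where I expect to spend all the effort, is the quasilinear loss of one derivative in the principal part: at $k=3$ the surviving top--order term is, schematically, $2\int g\,(\partial_x u^\epsilon\wedge\partial_x^4 u^\epsilon)\cdot\partial_x^3 u^\epsilon\,dx$, which cannot be bounded by $E$ directly (it is not removable by integration by parts, as the cross--product identities only produce tautologies). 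The resolution is that the principal operator $v\mapsto u\wedge g\,\partial_x^2 v$ is skew--adjoint modulo lower order --- it is the ``$ig\partial_x^2$'' of a Schr\"odinger equation written on the moving plane $u^\perp$ --- so this term is the time derivative of a lower--order quantity. Concretely I would replace $\|\partial_x^3 u^\epsilon\|_{L^2}^2$ by a modified energy $\|\partial_x^3 u^\epsilon\|_{L^2}^2+\int \Phi\,dx$, with $\Phi$ a suitable cubic expression in the derivatives of $u^\epsilon$ up to order three, chosen so that (upon differentiating $\Phi$ in time via the equation) its time derivative cancels the lossy term up to quantities controlled by Gagliardo--Nirenberg and $\|g\|_{W^{3,\infty}}$. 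This yields a closed inequality $\frac{d}{dt}E\le C(\|g\|)\,(1+E)^{p}$, hence a uniform bound on some $[0,T_1]$ with $T_1=T_1(g,u_0)$.

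\textbf{Passage to the limit and uniqueness.} With $u^\epsilon$ bounded in $L^\infty(0,T_1;H^3_{loc})$ and, reading off the equation, $\partial_t u^\epsilon$ bounded in $L^\infty(0,T_1;H^1_{loc})$, the Aubin--Lions lemma gives strong local convergence of a subsequence; the limit $u$ solves (\ref{LIA}), lies in $L^\infty(0,T_1;H^3_{loc})$ with $\partial_x u\in L^\infty(0,T_1;H^2)$, satisfies $u(0,\cdot)=u_0$ and $|u|=1$. For uniqueness I would take two solutions $u_1,u_2$ with the same datum and set $w=u_1-u_2$, so that $\partial_t w=u_1\wedge\Delta_g w+w\wedge\Delta_g u_2$. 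Testing against $w$, the term $(w\wedge\Delta_g u_2)\cdot w$ vanishes pointwise and the remaining term is treated, after integration by parts, through the skew--symmetry of $u_1\wedge\,\cdot$; using that $u_1,u_2\in H^3$ and, to neutralize the apparent derivative loss, a gauge/chordal reformulation of the difference (for instance controlling $\int(1-u_1\cdot u_2)\,dx$ rather than $\|w\|_{L^2}^2$ naively), one reaches $\frac{d}{dt}\|w\|_{L^2}^2\le C\|w\|_{L^2}^2$ and concludes $w\equiv0$ by Gronwall.
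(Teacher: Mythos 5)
Your overall architecture (regularize while preserving $|u|=1$, prove an $\epsilon$-uniform a priori estimate, pass to the limit by compactness, prove uniqueness by Gronwall) is the same as the paper's, which regularizes by a semi-discretization in space rather than by a Landau--Lifshitz--Gilbert term; that substitution is legitimate. The first genuine gap is in the step you yourself call the crux. You correctly identify the derivative loss in $2\int g\,(\partial_x u\wedge\partial_x^4 u)\cdot\partial_x^3 u\,dx$ and correctly note that integration by parts alone only returns tautologies, but your resolution --- a modified energy $\|\partial_x^3 u\|_{L^2}^2+\int\Phi$ with $\Phi$ ``chosen so that'' the lossy term cancels --- is asserted, never constructed: exhibiting $\Phi$ and checking the cancellation against a variable $g\in W^{1,\infty}(\mathbb{R}^+,W^{3,\infty}(\mathbb{R}))$ is precisely the technical content of the theorem, and it is not routine (for the record, the needed corrections are quartic in the derivatives, not cubic; at the lower level the paper's Lemma \ref{Lem_reg_1} uses the correction $-\frac{3}{2}\int g^2|\partial_x u|^4$). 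The paper sidesteps the need for top-order corrections by building its energies from $\partial_t u$ and $\Delta_g u$ rather than from pure spatial derivatives: it differentiates the (discrete) equation in time and tests with $\frac{du_h}{dt}$, respectively applies $D^-$ and tests with $g_hD^-\frac{du_h}{dt}$, so that the orthogonality relations $u\cdot\partial_t u=0$ and $\Delta_g u\cdot\partial_t u=0$ annihilate the dangerous terms; the spatial norms $\|\partial_x^2u\|,\|\partial_x^3u\|$ are then recovered from $|\Delta_{g_h}u_h|_h$ and $|D^-\Delta_{g_h}u_h|_h$ using the ellipticity $g\geq\alpha$. Either route can work, but as written your central estimate is a statement of intent, not a proof.

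The second gap is an actual error: the uniqueness inequality $\frac{d}{dt}\|w\|_{L^2}^2\leq C\|w\|_{L^2}^2$ cannot be reached by the means you describe. Testing $\partial_t w=u_1\wedge\Delta_g w+w\wedge\Delta_g u_2$ with $w$ and integrating by parts leaves exactly $-2\int g\,(\partial_x u_1\wedge\partial_x w)\cdot w\,dx$, which is of size $\|\partial_x w\|_{L^2}\|w\|_{L^2}$; the loss of one derivative on $w$ is irreducible at this level. Your proposed ``chordal'' substitute is not a different quantity at all: since $|u_1|=|u_2|=1$ one has $2\int(1-u_1\cdot u_2)\,dx=\|w\|_{L^2}^2$ identically, so it changes nothing, and the alternative ``gauge'' reformulation is left unspecified (a genuine parallel-transport comparison argument is a substantial piece of work in its own right). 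The paper's Theorem \ref{th_un} closes the loop at the $H^1$ level instead: it couples the $L^2$ identity (\ref{eg_un1}) with the estimate (\ref{eg_un2}) for $\frac{d}{dt}\int g|\partial_x\omega|^2$, and pays for this with a priori bounds on $\|\partial_x^2u\|_{L^2}$ and $\|\partial_x^3u\|_{L^2}$ of both solutions (Lemma \ref{Lem_reg_1}, Lemma \ref{Lem_reg_2}, Corollaries \ref{cor_reg_1} and \ref{cor_reg_2}) combined with the embedding $H^1(\mathbb{R})\subset L^\infty(\mathbb{R})$. You need to replace your $L^2$ Gronwall claim by such an $H^1$ (or $H^2$) estimate, which is also what makes the dependence of the constants on $\|\frac{du_0}{dx}\|_{H^2}$ appear.
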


The uniqueness is obtained from the following  quantitative comparison theorem:
\begin{theorem}\label{th_un}
Let $T>0$ and let $g:\mathbb{R}\rightarrow\mathbb{R}$ be a function  verifying the assumptions of Theorem \ref{th_ex_npf}. Let
$u$ and $\tilde{u}$ be two  solutions for (\ref{LIA}) with  initial datum  $u_0, \tilde{u}_0: \mathbb{R}\rightarrow S^2$, respectively.
Assume that that $u_0-\tilde{u}_0 \in H^2(\R)$ and that $\partial_x u, \partial_x\tilde{u} \in L^\infty(0,T, H^2(\mathbb{R})).$  
Then there exists two positive constants $C_1,C_2$, depending on $g,$ $T$, and the $H^2$ norm  of 
$\frac{\partial u_0}{\partial x}$ and $\frac{\partial \tilde{u}_0}{\partial x}$, such that
$$\|u(t,.)-\tilde{u}(t,.)\|_{H^1(\mathbb{R})}\leq C_1\|u_0-\tilde{u}_0\|_{H^1(\mathbb{R})},$$
$$\|u(t,.)-\tilde{u}(t,.)\|_{H^2(\mathbb{R})}\leq C_2\|u_0-\tilde{u}_0\|_{H^2(\mathbb{R})},$$
for almost every $t\in ]0,T[.$
\end{theorem}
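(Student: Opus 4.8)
The plan is to prove Theorem \ref{th_un} by deriving energy estimates for the difference $w := u - \tilde u$ of two solutions of \eqref{LIA}. Since both $u$ and $\tilde u$ take values in $S^2$ and satisfy the same equation, I would write the equation for $w$ by subtracting: using $\Delta_g(v) = \partial_x(g\,\partial_x v)$, one has
\begin{equation*}
\partial_t w = u \wedge \Delta_g(u) - \tilde u \wedge \Delta_g(\tilde u) = w \wedge \Delta_g(u) + \tilde u \wedge \Delta_g(w).
\end{equation*}
First I would establish the $H^1$ bound. The natural move is to test this equation against $w$ and against $\Delta_g(w)$ (or equivalently differentiate and test suitably), integrate over $\mathbb{R}$, and control the resulting terms. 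The $L^2$ part is easy since $w\wedge\Delta_g(u)\cdot w$ involves a triple product with a repeated factor and the term $\tilde u \wedge \Delta_g(w)\cdot w$ can be integrated by parts. The key structural point, as in the standard Schr\"odinger-map / Landau--Lifshitz difference estimates, is that the top-order term $\tilde u \wedge \Delta_g(w)$ does not contribute to the growth of the appropriate energy because of the orthogonality $\tilde u \wedge (\cdot) \perp (\cdot)$ after the right pairing; everything that survives is lower order and can be bounded by $\|w\|_{H^1}$ times a constant depending on $\|\partial_x u\|_{H^2}$, $\|\partial_x\tilde u\|_{H^2}$ and the $W^{3,\infty}$ norms of $g$.

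The main technical work is bookkeeping the commutator and lower-order terms. After writing $\tilde u \wedge \Delta_g(w) = \tilde u \wedge (g\,\partial_x^2 w + \partial_x g\,\partial_x w)$, the dangerous second-derivative term must be handled by integration by parts so that derivatives are redistributed onto $\tilde u$ and $g$; this is exactly where the hypotheses $\partial_x u,\partial_x\tilde u \in L^\infty(0,T,H^2)$ and $g\in W^{3,\infty}$ enter, since Sobolev embedding $H^2(\mathbb{R})\hookrightarrow W^{1,\infty}(\mathbb{R})$ gives uniform $L^\infty$ control of $\partial_x u,\ \partial_x\tilde u$ and their first derivatives. Collecting terms, I expect an inequality of the form
\begin{equation*}
\frac{d}{dt}\|w(t,.)\|_{H^1}^2 \leq C(g,T,\|\partial_x u_0\|_{H^2},\|\partial_x\tilde u_0\|_{H^2})\,\|w(t,.)\|_{H^1}^2 ,
\end{equation*}
valid for a.e.\ $t$, after which Gr\"onwall's lemma yields the first claimed estimate with $C_1 = C_1(g,T,\dots)$.

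For the $H^2$ estimate I would differentiate the equation for $w$ once in $x$, set $w_1 := \partial_x w$, and run the same energy argument at one higher order, testing against $w_1$ and $\Delta_g(w_1)$. The same orthogonality cancels the genuinely top-order ($\partial_x^3$) contribution, and the remaining terms are controlled by $\|w\|_{H^2}$ with constants again depending only on the $H^2$ norms of $\partial_x u,\partial_x\tilde u$ and the $W^{3,\infty}$ norm of $g$ (this is why one full extra derivative of regularity on $g$ beyond the $H^1$ case is required). A second application of Gr\"onwall then gives the $H^2$ bound with $C_2$.

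The hard part will be making the top-order cancellation rigorous at the level of weak solutions rather than merely formally: since $u,\tilde u$ are only guaranteed in $L^\infty(H^3_{loc})$ with $\partial_x u\in L^\infty(H^2)$, the integrations by parts and the pointwise use of $|u|=|\tilde u|=1$ must be justified, possibly via a density/regularization argument or by working on the regularized (semi-discrete or mollified) solutions from the construction and passing to the limit. I would also need to verify that all the integrals over $\mathbb{R}$ converge (no boundary terms at infinity), which follows from the $L^2$ decay of $\partial_x u,\partial_x\tilde u$ and of $w$ once the initial difference is in $H^2$; this integrability, rather than the algebra, is the genuine obstacle.
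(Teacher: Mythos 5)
Your $H^1$ step is sound and is essentially the paper's own argument: the paper writes $\partial_t\omega=z\wedge\Delta_g\omega+\omega\wedge\Delta_g z$ with the \emph{symmetric} splitting $z=\tfrac12(u+\tilde u)$, $\omega=u-\tilde u$ (equation (\ref{deriv5})), tests against $\omega$ and $\Delta_g\omega$ (inequalities (\ref{eg_un1})--(\ref{eg_un2})), and applies Gr\"onwall; your non-symmetric splitting $w\wedge\Delta_g u+\tilde u\wedge\Delta_g w$ works equally well at this level. One caveat: the theorem requires constants depending on $g$, $T$ and the norms of the \emph{initial} data, whereas your argument produces constants depending on $\sup_t\|\partial_x u(t)\|_{H^2}$ and $\sup_t\|\partial_x\tilde u(t)\|_{H^2}$; to bridge this you need the a priori estimates that the paper proves first (Lemmas \ref{Lem_reg_1} and \ref{Lem_reg_2}, Corollaries \ref{cor_reg_1} and \ref{cor_reg_2}), which your sketch omits.

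The genuine gap is the $H^2$ step. Differentiating the difference equation in $x$ and testing against $w_1=\partial_x w$ and $\Delta_g w_1$ does \emph{not} close: the antisymmetry kills only the single term $\int(\tilde u\wedge\Delta_g w_1)\cdot\Delta_g w_1$, while the commutator $[\partial_x,\Delta_g]w=\partial_x g\,\partial^2_x w+\partial^2_x g\,\partial_x w$ and the term $w\wedge\partial_x\Delta_g u$ leave, after every admissible integration by parts, contributions such as
\begin{equation*}
\int_{\mathbb{R}} g\,\partial_xg\,\bigl(\tilde u\wedge\partial^2_x w\bigr)\cdot\partial^3_x w\,dx,
\qquad
\int_{\mathbb{R}} g\,\bigl(w\wedge\partial_x\Delta_g u\bigr)\cdot\partial^3_x w\,dx .
\end{equation*}
These are reproduced verbatim by further integration by parts (the alternative moves the derivative onto $\partial_x\Delta_g u$, producing $\partial_x^4 u$, which is not even finite under your hypotheses), so $\partial^3_x w$ cannot be eliminated. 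Under the assumptions, $\|\partial^3_x w\|_{L^2}$ is merely \emph{bounded} a priori, not controlled by $\|w\|_{H^2}$; hence the best inequality this route yields is $\frac{d}{dt}\|w\|^2_{H^2}\leq C\|w\|^2_{H^2}+C'\|w\|_{H^2}$, whose Gr\"onwall bound carries an additive constant that does not vanish as $\|w_0\|_{H^2}\rightarrow 0$. That gives boundedness, not the claimed Lipschitz estimate $\|w(t)\|_{H^2}\leq C_2\|w_0\|_{H^2}$. The paper avoids this loss of derivatives by a genuinely different device: from (\ref{deriv4}) it derives the second-order-in-time equation (\ref{deriv6}) for $\omega$, runs the energy $\|\partial_t\omega\|^2_{L^2}+\|\Delta_g\omega\|^2_{L^2}$ tested against $\partial_t\omega$, and handles the unique third-order term $E_2=\int g^2(\partial_x z\cdot\partial^3_x\omega)(z\cdot\partial_t\omega)+E_{21}$ by the constraint identity $z\cdot\partial_t\omega=-\omega\cdot\partial_t z$, which holds because $z\cdot\omega=\tfrac12(|u|^2-|\tilde u|^2)=0$; after this substitution one integration by parts produces only quantities quadratic in $(\omega,\partial_x\omega,\partial^2_x\omega,\partial_t\omega)$ with a priori bounded coefficients, and $\|\partial_t\omega|_{t=0}\|_{L^2}\leq C\|\omega_0\|_{H^2}$ follows from (\ref{deriv5}). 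Note that this identity is exactly what your non-symmetric splitting destroys ($\tilde u\cdot w=\tilde u\cdot u-1\neq 0$), so the paper's fix is not available in your framework without first symmetrizing.
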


Concerning the case $g=g(t,x,\gamma)$, we have 
\begin{theorem}\label{ex_fcb_g(gamma)}
Assume that $g=g(t,x,\gamma)$ for some function $g \in W^{1,\infty}(\R^+, W^{2,\infty}(\mathbb{R}\times\mathbb{R}^3))$ and that there  exists $\alpha>0$ such that
$g\geq\alpha.$  Let $\gamma_0: \mathbb{R}\rightarrow\mathbb{R}^3$ be such that $\frac{d^2\gamma_0}{dx^2}\in H^1(\mathbb{R}).$
Then there exists $T_1 =T_1(g,\gamma_0)$ such that equation (\ref{FCB}) has a solution $\gamma \in L^\infty(0,T_1,H^3_{loc}(\mathbb{R}))$
with $\gamma(0,.)= \gamma_0$ and $\partial^2_x \gamma\in L^\infty(0,T_1,H^1(\mathbb{R}))$.
\end{theorem}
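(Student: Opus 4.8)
The plan is to decouple the dependence of $g$ on $\gamma$ by a fixed‑point argument on a short time interval, reducing at each step to the $g=g(t,x)$ theory already established. Concretely, I would introduce a solution operator $\Phi$: given a curve $\beta$ on $[0,T_1]\times\R$ with $\beta(0,\cdot)=\gamma_0$ lying in a suitable ball, freeze the position variable inside $g$ by setting $\tilde g(t,x):=g(t,x,\beta(t,x))$, which now depends only on $(t,x)$; then solve the frozen problem $\partial_t\gamma=\tilde g\,\partial_x\gamma\wedge\partial_x^2\gamma$ with datum $\gamma_0$ by passing to $u=\partial_x\gamma$ and solving \eqref{LIA} with the $(t,x)$‑coefficient $\tilde g$, reconstructing the curve through Lemma \ref{FCB_LIA} (which applies precisely because $\tilde g$ no longer sees $\gamma$); set $\Phi(\beta):=\gamma$. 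A fixed point of $\Phi$ is a curve $\gamma$ whose frozen coefficient is $g(t,x,\gamma)$ itself, hence a solution of \eqref{FCB}.

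Before running the fixed point I would check that $\Phi$ is well defined on the relevant ball, and this is exactly where the hypothesis $g\in W^{1,\infty}(\R^+,W^{2,\infty}(\R\times\R^3))$ enters. Since $\partial_x^2\gamma_0\in H^1(\R)$, the natural class is $u=\partial_x\beta$ with $\partial_x u\in L^\infty(0,T_1,H^1)$; for $\beta$ in this class the chain rule gives $\partial_x\tilde g=\partial_xg+\nabla_\gamma g\cdot\partial_x\beta$ and $\partial_x^2\tilde g=\partial_x^2g+2\,\partial_x\nabla_\gamma g\cdot\partial_x\beta+D^2_\gamma g(\partial_x\beta,\partial_x\beta)+\nabla_\gamma g\cdot\partial_x^2\beta$. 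Here $\partial_x\beta\in L^\infty$ (since $|\partial_x\beta|=1$) and $\partial_x^2\beta=\partial_x u\in H^1(\R)\subset L^\infty(\R)$, so every term is controlled by the $W^{2,\infty}$ bounds on $g$ together with $\|\partial_x^2\beta\|_{H^1}$; consequently $\tilde g\in W^{1,\infty}_tW^{2,\infty}_x$ uniformly over the ball. This is exactly the amount of regularity needed to close the energy estimate for $\partial_x u$ in $H^1$: I would reproduce the a priori estimates behind Theorem \ref{th_ex_npf}, but one derivative lower, which is why $W^{2,\infty}$ rather than $W^{3,\infty}$ suffices here.

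Next I would establish the two properties of a contraction scheme on a possibly smaller $[0,T_1]$. For stability of the ball, the $H^1$‑energy estimate for $\partial_x u$ yields a differential inequality $\tfrac{d}{dt}\mathcal E\le C(\|g\|_{W^{1,\infty}_tW^{2,\infty}_x},\mathcal E)$ with $\mathcal E(t)=\|\partial_x u(t,\cdot)\|_{H^1}^2$, whose solution stays below a fixed bound up to a time $T_1=T_1(g,\gamma_0)$ independent of $\beta$; choosing the radius and $T_1$ accordingly makes $\Phi$ map the ball into itself. For contractivity, the decisive point is that $g$ is Lipschitz in its last variable, so two frozen coefficients satisfy $|\tilde g_1-\tilde g_2|\le\|\nabla_\gamma g\|_\infty|\beta_1-\beta_2|$ with analogous bounds for one $x$‑derivative. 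Writing $w=\partial_x\Phi(\beta_1)-\partial_x\Phi(\beta_2)$, which solves a difference equation with $w(0,\cdot)=0$ and with the discrepancy $\tilde g_1-\tilde g_2$ entering as a forcing term, I would run the comparison energy method used for Theorem \ref{th_un} to obtain $\|\Phi(\beta_1)-\Phi(\beta_2)\|_{Y}\le C\,T_1^{\theta}\|\beta_1-\beta_2\|_{Y}$ in a weaker norm $Y$ such as $L^\infty(0,T_1,H^1)$. This is a strict contraction for small $T_1$; since the high‑norm ball is closed for the weak metric by lower semicontinuity, the Banach fixed‑point theorem produces the desired solution, and the same estimate yields its uniqueness in the class.

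The main obstacle I anticipate is the loss of one $x$‑derivative in the composition $g(t,x,\beta(t,x))$: the frozen coefficient $\tilde g$ is genuinely less regular in $x$ than $\beta$, so one cannot quote Theorem \ref{th_ex_npf} verbatim — its hypothesis $g\in W^{3,\infty}$ simply fails for $\tilde g$ — and must instead re‑run the energy (or semi‑discrete) estimates at the $\partial_x u\in H^1$ level, checking that every term produced by differentiating $\tilde g$ twice is absorbed using only the $W^{2,\infty}$ bounds on $g$ and the $H^1$ control of $\partial_x^2\beta$. The delicate balancing point is to choose the contraction norm $Y$ high enough that $\tilde g_1-\tilde g_2$ and its first derivative are controlled, yet low enough that the comparison estimate of Theorem \ref{th_un} closes; reconciling these two requirements, and keeping the lifespan $T_1$ uniform over the ball, is where the real work lies.
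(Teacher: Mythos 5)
Your setup (freezing the coefficient, the chain-rule count showing $\tilde g\in W^{1,\infty}_tW^{2,\infty}_x$, and the plan to redo the energy estimates one derivative below Theorem \ref{th_ex_npf}) is reasonable, but the fixed-point superstructure has a genuine gap: the contraction step cannot be closed at the regularity the theorem provides. The comparison method of Theorem \ref{th_un} is not quotable and not reproducible here: its proof assumes the two solutions being compared satisfy $\partial_x u,\partial_x\tilde u\in L^\infty(0,T,H^2(\R))$ and the coefficient lies in $W^{3,\infty}$ (it invokes Corollaries \ref{cor_reg_1} and \ref{cor_reg_2}, i.e. $L^2$ control of three spatial derivatives of the solutions, in the key estimate (\ref{eg_un2})), whereas under the hypotheses of Theorem \ref{ex_fcb_g(gamma)} your iterates only have $\partial_x u\in L^\infty(0,T_1,H^1(\R))$ (because only $\partial_x^2\gamma_0\in H^1$) and $\tilde g\in W^{2,\infty}$. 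If you try to re-derive the difference estimate at this level, with $w=u_1-u_2$, the $H^1$ energy identity produces the term $\int \tilde g\,(w\wedge\partial_x\Delta_{\tilde g}z)\cdot\partial_x w$ (with $z$ the average of the two solutions), which requires $\partial_x^3 z\in L^2$ --- one derivative more than you have; integrating by parts instead converts it into $\int \tilde g\,(\partial_x^2 w\wedge w)\cdot\Delta_{\tilde g}z$, and $\|\partial_x^2 w\|_{L^2}$ can only be bounded by the radius $M$ of the strong ball, never by $\|w\|_{H^1}$. The resulting inequality has the form $\frac{d}{dt}\|w\|_{H^1}^2\lesssim M^2\|w\|_{H^1}+\cdots$, which is no longer homogeneous in the difference: with $w(0)=0$ Gr\"onwall yields $\|w\|_{H^1}\lesssim M^2T_1$, a quantity that does not shrink as $\|\beta_1-\beta_2\|_Y\to 0$, so $\Phi$ is not a strict contraction (the same derivative loss already appears in the $L^2$ difference estimate through the irreducible term $\int\tilde g\,(w\wedge\partial_x u_2)\cdot\partial_x w$, and interpolation only trades it for a non-Lipschitz power $\|w\|^{3/2}$, which defeats both contraction and uniqueness). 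For the same reason uniqueness of the frozen problem is out of reach here, so $\Phi$ is not even known to be single-valued. It is no accident that Theorem \ref{ex_fcb_g(gamma)} asserts existence only.

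The paper avoids the decoupling entirely and therefore never needs a stability estimate for differences of solutions: it discretizes the full equation (\ref{FCBgD}) with $g_h=\{g(t,x_i,\gamma_h(t,x_i))\}_i$ evaluated along the unknown discrete curve, derives the a priori bounds (\ref{estim8})--(\ref{estim11}) --- which contain the extra terms $\tilde{\Delta}_{g_h}u_h$ generated by $\nabla_\gamma g$, and this is exactly where the $W^{2,\infty}(\R\times\R^3)$ hypothesis is spent --- obtains a uniform lifespan $T_1$ by the Riccati-type argument of Lemma \ref{lem_born2}, and then passes to the limit by compactness (Lemmas \ref{compa}, \ref{lem_cv_l2f}, \ref{lem_cv_h-1}), identifying the limit equation and recovering $\partial_x\gamma=u$ and $\partial_x^2 u\in L^\infty(0,T_1,L^2)$ directly. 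In other words, your "stability of the ball" step parallels the paper, but where you need Banach's theorem the paper needs only weak-star compactness; to repair your argument you would have to replace contraction by compactness of the iterates (abandoning uniqueness), at which point you have essentially rebuilt the paper's proof in a more roundabout way.
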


\subsection{Reconstruction of the flow $\gamma$}  
Let $I\subset \mathbb{R}^+$ be an interval containing $0,$ let $u\in L^\infty(I,H^1_{loc}(\mathbb{R}))$ be a solution for (\ref{LIA}) and let
the function $\Gamma_u\in L^\infty(I,H^2_{loc}(\mathbb{R}))$ be defined by
\begin{equation}\label{Gamma_u}
\Gamma_u(t,x)=\int_0^xu(t,z)dz.
\end{equation}
In the sense of distributions on $I\times\mathbb{R}$, We have
\begin{equation}\label{Gamma_u_1}
\partial_x\left(\partial_t\Gamma_u-g\partial_x\Gamma_u\wedge \partial^2_x\Gamma_u\right)
=0.
\end{equation}
By construction, all the curves $\Gamma_u(t,.)$ have the same base point $\Gamma_u(t,0)$ fixed at the origin. If they were smooth,
equation (\ref{Gamma_u_1}) would directly imply the existence of a  function $c_u=c_u(t)$ such that the function
$$\gamma_u(t,x)=\Gamma_u(t,x)+c_u(t)$$
is a solution to (\ref{FCB}) with $g=g(t,x)$. In this case, we have 
\begin{eqnarray*}
c_u(t)&=& \gamma_u(t,x)-\Gamma_u(t,x)\\
&=& \gamma_u(0,x)+\int_0^tg(\tau,x)u(\tau,x)\wedge \partial_xu(\tau,x)d\tau-\int_0^xu(t,z)dz\\
&=& \gamma_u(0,0)+\int_0^x\left(u(0,z)-u(t,z)\right)dz +\int_0^tg(\tau,x)u(\tau,x)\wedge \partial_xu(\tau,x)d\tau.
\end{eqnarray*}
Note that the function $c_u$ represents the evolution in time of the actual base point of the curves $\gamma_u(t,.)$. 

The relation between  the modified binormal curvature flow equation (\ref{FCB}) and the equation (\ref{LIA}) is specified in Lemma \ref{FCB_LIA},
stated above, whose proof we now present.
\subsubsection{Proof of Lemma \ref{FCB_LIA}}
Define $a\in \mathcal{D}'(I\times\mathbb{R},\mathbb{R}^3)$ by letting
 $$a(t,x) = \int_0^x\left(\omega(0,z)-\omega(t,z)\right)dz +\int_0^tg(\tau,x)\omega(\tau,x)\wedge\partial_x\omega(\tau,x)d\tau.$$
Let $\chi \in \mathcal{D}(\mathbb{R},\mathbb{R})$ be such that $\int_\mathbb{R}\chi(z)dz=1$ and set
$$c_\omega(t)=\int_\mathbb{R}\chi(z)a(t,z)dz.$$ 
By construction, we have $c_\omega(0)=0$ and, since  $\omega \in W^{1,\infty}(I,H^{-1}(\mathbb{R})),$ we also have $c_\omega\in \mathcal{C}(I,\mathbb{R}^3).$
On the other hand, we have
\begin{eqnarray}\label{Gamma_u_2}
\partial_x(\partial_ta) &=& \partial_t\partial_x\Gamma_\omega
-\partial_x\left(g\omega\wedge \partial_x\omega\right)\notag\\
&=&\partial_t\omega- \omega\wedge \Delta_g\omega \notag\\
&=& 0,
\end{eqnarray}
since $\omega$ is a solution to (\ref{LIA}). Hence $\partial_ta(t,x)$ does not depend on $x$ which implies in turn that,
for all $\varphi\in\mathcal{D}(I,\mathbb{R}^3)$, 
\begin{eqnarray}\label{Gamma_u_5}
\int_Ic_\omega(t)\cdot\varphi'(t)dt &=& \int_I\int_\mathbb{R}\chi(z)a(t,z)\cdot\varphi'(t)dtdz\notag\\
&=& -\int_\mathbb{R}\chi(z)\int_I\partial_ta(t,z)\cdot\varphi(t)dtdz\notag\\
&=&-\int_I\partial_ta(t,z)\cdot\varphi(t)dt.
\end{eqnarray}
Relation (\ref{Gamma_u_5}) means that 
\begin{equation}\label{gamma_u_6}
c_\omega'=\partial_t a=-\partial_t\Gamma+g\omega\wedge\partial_x\omega\quad \text{in}\quad \mathcal{D}'(I,\mathbb{R}^3).
\end{equation}
Now we show that the function $\gamma_\omega,$ defined on $I\times \mathbb{R}$ by
$$\gamma_\omega(t,x)=\Gamma_\omega(t,x)+c_\omega(t),$$
is a solution to (\ref{FCB}) on $I\times\mathbb{R}.$ To this end, let $\psi\in \mathcal{D}(I\times\mathbb{R},\mathbb{R}^3)$ and let
$$\varphi(t)=\int_{\mathbb{R}}\psi(t,z)dz \in \mathcal{D}(I,\mathbb{R}^3).$$
Then, in view of (\ref{gamma_u_6}), we have
\begin{eqnarray*}
\langle \partial_t\gamma_\omega
-g\partial_x\gamma_\omega\wedge\partial^2_x\gamma_\omega,\psi\rangle_{I\times\mathbb{R}}
&=& \langle \partial_t\Gamma_\omega-g\omega\wedge\partial_x\omega,\psi\rangle_{I\times\mathbb{R}}
+\langle c_\omega,\psi\rangle_{I\times\mathbb{R}}\\
&=& -\langle \partial_t a,\varphi \rangle_I+\langle c_\omega',\varphi\rangle_I\\
&=& 0,
\end{eqnarray*}
where $\langle,\rangle_{I\times\mathbb{R}}$ denotes the duality pairing between $ \mathcal{D}'(I\times\mathbb{R},\mathbb{R}^3)$ and
$ \mathcal{D}(I\times\mathbb{R},\mathbb{R}^3),$ while $\langle,\rangle_I$ denotes that between $ \mathcal{D}'(I,\mathbb{R}^3)$ and
$ \mathcal{D}(I,\mathbb{R}^3).$ This proves the existence of $c_\omega.$ The uniqueness of $c_\omega$ follows by noting that any other 
function $\tilde{c}_\omega$ is required to be continuous  with $\tilde{c}_\omega(0)=0$, and since 
its distributional derivative $\tilde{c}_\omega'=\partial_ta = c_\omega$.

\section{Approximation by discretization of the Schr\"{o}dinger map equation }
We present here the strategy of proof of theorems \ref{th_ex_np}, \ref{th_ex_p} and \ref{th_ex_npf}.
We discretise, in space, the continuous system 
\begin{equation}\label{PLIAg}
\left\{\begin{array}{lr}
\partial_t u=\partial_x\left(u\wedge g\partial_x u\right)=
u\wedge \partial_x\left(g\partial_x u\right),& t\geq 0,\quad x\in \mathbb{R},\\
u(0,.)=u_0,&
\end{array}\right.
\end{equation}
in the following sense:\\
For some $h>0,$ we consider the sequence $u_h\equiv\{u_h(t,x_i)\}_{i\in \mathbb{Z}}$ satisfying the semi-discrete system
\begin{equation}\label{PLIAgD}
\left\{\begin{array}{lr}
\frac{du_h}{dt}=D^+\left(u_h\wedge g_hD^-u_h\right)=
u_h\wedge D^+\left(g_hD^-u_h\right),& t\geq 0,\\
u_h(0,x_i)=u_h^0(x_i),\quad i\in\mathbb{Z}, &
\end{array}\right.
\end{equation}
where $\{x_i\}_{i\in \mathbb{Z}},$ is a uniform subdivision of $\mathbb{R}$ with step  $h,$  $g_h\equiv\{ g(t,x_i)\}_{i\in \mathbb{Z}},$ and 
$D^+, D^-$ are two operators approximating the derivative operator $\partial_x$. The sequence $\{ u_h^0(x_i)\}_{i\in \mathbb{Z}}$ is constructed 
such that it converges to $u_0$ in certain sense.
We solve the  problem (\ref{PLIAgD}) in some space  discretising  the space $L^\infty(\mathbb{R}^+,H^1_{loc}(\mathbb{R}))$
where our research for solving the continuous problem (\ref{PLIAg}) takes a place. Then, we prove boundedness properties for the discrete derivatives
$D^+u_h$, $D^-D^+u_h$ and $D^+D^-D^+u_h$. This allows us, 
by using the compactness properties in the spaces $L^2(\mathbb{R})$ and $H_{loc}^1(\mathbb{R}),$ to extract a subsequence $\{u_h\}_h$
\footnote{To give sense to the notation $\{u_h\}_h$, we can consider $h:\mathbb{N}\rightarrow \mathbb{R}^+$ to be a strictly decreasing function  
which  to zero when $n\rightarrow +\infty.$ We have made this choice for its simplicity.}
which converges to a solution of (\ref{PLIAg}).
The proof of Theorem \ref{th_un} is standard. We consider two solutions $u$ and $\tilde{u}$ with initial datum
$u_0$ and $\tilde{u}_0$ respectively, then we prove Gr\"{o}nwall-type inequalities for $\|u-\tilde{u}\|_{H^1}$ and $\|u-\tilde{u}\|_{H^2}$.
For  Theorem \ref{ex_fcb_g(gamma)}, we follow the same strategy followed in the proof of Theorem \ref{th_ex_npf}.

In what follows, we define the elements of the discrete problem (\ref{PLIAgD}). Then, we prove some convergence  properties before we   
skip to the proofs of main theorems.

\begin{definition}
Let $h>0.$ Let
$$\mathbb{Z}_h=\{x_i\in \mathbb{R},\quad x_{i+1}-x_i = h \quad\forall i\in \mathbb{Z}\}.$$
We define the two spaces $L_h^2$ and $L_h^\infty$ by
$$L_h^2=\{v_h=\{v_h(x_i)\}_i\in (\mathbb{R}^3)^{\mathbb{Z}_h},\quad \sum_i|v_h(x_i)|^2<+\infty \},$$
$$L_h^\infty=\{v_h=\{v_h(x_i)\}_i\in (\mathbb{R}^3)^{\mathbb{Z}_h},\quad \sup_i|v_h(x_i)|<+\infty \}.$$
We define the scalar product  $(,)_h$ on $L_h^2$ by
$$(u_h,v_h)_h=h\sum_iv_h(x_i)\cdot u_h(x_i),\quad u_h, v_h \in L_h^2.$$
Its associated norm $|.|_h$ is defined by 
$$|v_h|_h^2=h\sum_i|v_h(x_i)|^2.$$
Let $l>0,$  $N\in \mathbb{N}$ and $h=\frac{l}{N}.$ We define the space of $N$-periodic sequences 
$$P_{l,N}=\{v_h \in (\mathbb{R}^3)^{\mathbb{Z}_h},\quad v_h(x_i)=v_h(x_{i+N}),\quad i\in\mathbb{Z}\}.$$
We define the scalar product $(,)_{l,N}$ by 
$$(u_h,v_h)_{l,N}=h\sum_{i=1}^{N}v_h(x_i)\cdot u_h(x_i).$$
Its associated norm $|.|_{l,N}$ is defined by 
$$|v_h|_{l,N}^2=h\sum_{i=1}^{i=N}|v_h(x_i)|^2.$$
Let $v_h\in (\mathbb{R}^3)^{\mathbb{Z}_h}$. We define the left and the right approximations of the derivatives in $x_i$ by 
\begin{equation*}
\left\{\begin{array}{lr}
D^-v_h(x_i)=\frac{v_h(x_i)-v_h(x_{i-1})}{h},&\\
D^+v_h(x_i)=\frac{v_h(x_{i+1})-v_h(x_i)}{h}.&
\end{array}\right.
\end{equation*}
It is clear that for two sequences $u_h=\{u_h(x_i)\}_i$ and $v_h=\{v_h(x_i)\}_i$,  we have 
$$D^\pm(u_hv_h)=\tau^\pm u_hD^\pm v_h+ D^\pm u_h v_h,$$ with
$$\tau^\pm u_h(x_i) = u_h(x_{i\pm1}).$$
\end{definition}
The two spaces  $L^2_h$ and $P_{l,N}$ verify the following property  
\begin{lemma}\label{Pd}
\begin{enumerate}
 \item  Let $v_h\in L_h^2,$ then  $D^+v_h\in L_h^2$ and we have 
$$|D^+v_h|_h\leq \frac{2}{h}|v_h|_h.$$
\item  Let $v_h\in P_{l,N},$ then  $D^+v_h\in P_{l,N}$  and we have 
$$|D^+v_h|_{l,N}\leq \frac{2}{h}|v_h|_{l,N},\quad h = \frac{l}{N}.$$
\end{enumerate}

\end{lemma}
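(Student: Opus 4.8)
The plan is to reduce both parts to a single elementary pointwise inequality together with a shift-invariance of the relevant sum. The key ingredient is that for any $a,b\in\R^3$ one has $|a-b|^2\le 2|a|^2+2|b|^2$ (immediate from $-2\,a\cdot b\le |a|^2+|b|^2$). First I would unwind the definitions of $D^+$ and of the discrete norm:
\begin{equation*}
|D^+v_h|_h^2 = h\sum_i\left|\frac{v_h(x_{i+1})-v_h(x_i)}{h}\right|^2 = \frac{1}{h}\sum_i|v_h(x_{i+1})-v_h(x_i)|^2.
\end{equation*}
Applying the inequality above with $a=v_h(x_{i+1})$ and $b=v_h(x_i)$ and summing yields
\begin{equation*}
\sum_i|v_h(x_{i+1})-v_h(x_i)|^2 \le 2\sum_i|v_h(x_{i+1})|^2 + 2\sum_i|v_h(x_i)|^2 = 4\sum_i|v_h(x_i)|^2,
\end{equation*}
where the final equality uses that the re-indexing $i\mapsto i+1$ leaves a sum over all of $\Z$ unchanged. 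Substituting back gives $|D^+v_h|_h^2\le \frac{4}{h^2}\cdot h\sum_i|v_h(x_i)|^2 = \frac{4}{h^2}|v_h|_h^2$, and taking square roots produces the claimed bound. In particular the right-hand side is finite, so $D^+v_h\in L_h^2$, establishing the first assertion.

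For the periodic statement the arithmetic is identical, but two bookkeeping verifications are needed. First, $D^+v_h$ inherits $N$-periodicity: using the defining relation $v_h(x_{j+N})=v_h(x_j)$ we get $D^+v_h(x_{i+N}) = h^{-1}\bigl(v_h(x_{i+N+1})-v_h(x_{i+N})\bigr) = h^{-1}\bigl(v_h(x_{i+1})-v_h(x_i)\bigr) = D^+v_h(x_i)$, so $D^+v_h\in P_{l,N}$. Second, in the finite sum $\sum_{i=1}^N$ the shift $i\mapsto i+1$ now produces the range $\{2,\dots,N+1\}$, and the endpoint term $|v_h(x_{N+1})|^2$ equals $|v_h(x_1)|^2$ by periodicity; hence $\sum_{i=1}^N|v_h(x_{i+1})|^2 = \sum_{i=1}^N|v_h(x_i)|^2$, exactly as the unbounded shift behaves above. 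The same chain of inequalities then gives $|D^+v_h|_{l,N}\le \frac{2}{h}|v_h|_{l,N}$.

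I do not expect any genuine analytic obstacle here: the only points requiring attention are the two index-shift arguments. In the unbounded case the shift is legitimate precisely because $v_h\in L_h^2$ makes the series absolutely convergent, so re-indexing is permitted; in the periodic case one must check that the shifted finite sum closes up correctly, which is exactly the role of the endpoint identity $|v_h(x_{N+1})|^2=|v_h(x_1)|^2$. The constant $2$ is inherited directly from the crude bound $|a-b|^2\le 2|a|^2+2|b|^2$, and no sharper estimate is required, since the lemma is needed only to guarantee that $D^+$ maps each discrete space boundedly into itself for fixed $h$.
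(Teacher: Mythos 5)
Your proof is correct and follows essentially the same route as the paper, which likewise reduces everything to the pointwise inequality $|D^+v_h(x_i)|^2 \leq \frac{2}{h^2}\bigl(|v_h(x_i)|^2+|v_h(x_{i+1})|^2\bigr)$ and sums over $i$. Your write-up simply makes explicit the index-shift and periodicity bookkeeping that the paper leaves implicit.
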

\begin{proof}
It follows directly from the inequality  
$$|D^+_hv_h(x_i)|^2 \leq \frac{2}{h^2}(|v_h(x_i)|^2+|v_h(x_{i+1})|^2).$$
\end{proof}

\begin{definition}
We define the norm 
$$|v_h|^2_{H_h^1}=|v_h|^2_h+|D^+v_h|^2_h, \quad v_h \in L_h^2,$$
and the space 
$$H_h^{-1}=\left\{v_h\in (\mathbb{R}^3)^{\mathbb{Z}_h},\quad \sup_{u_h\in L_h^2}\frac{\langle v_h,u_h\rangle_h}{|u_h|_{H_h^1}}<+\infty \right\}.$$
It is clear that $L_h^2\subset H_h^{-1}$ and the function
$v_h\mapsto |v_h|_{H_h^{-1}}\equiv\sup_{u_h\in L_h^2}\frac{\langle v_h,u_h\rangle_h}{|u_h|_{H_h^1}}$ define a norm on $H_h^{-1}.$
Similarly, we define the norms 
$$|v_h|^2_{H_{l,N}^1}=|v_h|^2_{l,N}+|D^+v_h|^2_{l,N},$$
$$|v_h|_{H_{l,N}^{-1}}=\sup_{u_h\in P_{l,N}}\frac{\langle v_h,u_h\rangle_{l,N}}{|u_h|_{H_{l,N}^1}},\quad v_h\in P_{l,N}.$$
The two norms $|.|_{H_h^{-1}}$ and $|.|_{H_{l,N}^{-1}}$ are the dual norms of $|.|_{H_h^1}$ and $|.|_{H_{l,N}^1}$   
with respect to scalar product $\langle,\rangle_h$ et $\langle,\rangle_{l,N}$ respectively.
\end{definition}

\begin{lemma}[discrete integration by parts formula]
For all $(v_h,u_h) \in L_h^\infty\times\in L^2_h,$  we have 
\begin{equation}\label{IPPd}
\sum_iv_h(x_i)\cdot D^+u_h(x_i)=-\sum_iu_h(x_i)\cdot D^-v_h(x_i).
\end{equation}
Similarly, for all $v_h,u_h\in P_{l,N},$ we have  
\begin{equation}\label{IPPdp}
\sum_{i=1}^{N}v_h(x_i)\cdot D^+u_h(x_i)=-\sum_{i=1}^{N}u_h(x_i)\cdot D^-v_h(x_i).
\end{equation}
\end{lemma}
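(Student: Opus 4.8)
The plan is to reduce both identities to a single application of the discrete Leibniz rule recalled above, followed by telescoping. First I would introduce the scalar sequence $w_h(x_i):=(\tau^-v_h(x_i))\cdot u_h(x_i)=v_h(x_{i-1})\cdot u_h(x_i)$ and apply the product rule $D^\pm(u_hv_h)=\tau^\pm u_h\,D^\pm v_h+D^\pm u_h\,v_h$ (in its dot-product form, which is checked by the same one-line computation) to the pair $p_h:=\tau^-v_h$ and $u_h$. Since $\tau^+p_h=v_h$ and $D^+p_h=D^-v_h$, this gives the exact identity
$$D^+w_h(x_i)=v_h(x_i)\cdot D^+u_h(x_i)+u_h(x_i)\cdot D^-v_h(x_i).$$
Thus the sum we wish to make vanish is an exact discrete derivative, and everything reduces to showing that $\sum_i D^+w_h(x_i)=0$ in the appropriate sense.

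For the periodic case the conclusion is immediate. If $v_h,u_h\in P_{l,N}$ then $w_h\in P_{l,N}$ as well, and telescoping over a single period gives $\sum_{i=1}^N D^+w_h(x_i)=\frac1h\left(w_h(x_{N+1})-w_h(x_1)\right)=0$ by $N$-periodicity. Summing the displayed identity over $1\le i\le N$ and rearranging then yields \eqref{IPPdp} with no boundary contribution.

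For the case $(v_h,u_h)\in L_h^\infty\times L_h^2$ I would argue by truncation. For integers $M,N$, telescoping gives exactly $\sum_{i=-M}^{N}D^+w_h(x_i)=\frac1h\left(w_h(x_{N+1})-w_h(x_{-M})\right)$. Because $u_h\in L_h^2$ forces $u_h(x_i)\to0$ as $|i|\to\infty$ while $v_h$ remains bounded, we have $w_h(x_i)=v_h(x_{i-1})\cdot u_h(x_i)\to0$, so the boundary terms vanish in the limit $M,N\to\infty$. Summing the displayed identity over $-M\le i\le N$ and passing to the limit then produces \eqref{IPPd}.

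The delicate point, which I expect to be the main obstacle, is the convergence of the two infinite series themselves: under the stated hypotheses boundedness of $v_h$ together with square-summability of $u_h$ controls only $L_h^2$-type tails, not $L^1$-type ones, so neither $\sum_i v_h\cdot D^+u_h$ nor $\sum_i u_h\cdot D^-v_h$ need converge absolutely. What the truncation argument genuinely establishes is that the matched partial sums of the two series differ by the boundary quantity $\frac1h\left(w_h(x_{N+1})-w_h(x_{-M})\right)$, which tends to $0$; hence the two series converge or diverge together, and when read as limits of symmetric partial sums they coincide. I would state this interpretation explicitly, and remark that in every application of the formula in this paper one of the two factors carries enough decay (or, after testing against a function in $\mathcal{D}$, compact support) to make both series genuinely absolutely convergent, so that no ambiguity remains.
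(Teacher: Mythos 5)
Your proof is correct and follows essentially the same route as the paper's: both are a discrete summation by parts (your telescoping of $D^+w_h$ is exactly the paper's ``develop the sum and make a change in index''), with the boundary terms killed by $u_h(x_i)\rightarrow 0$ (a consequence of $u_h\in L^2_h$) together with the boundedness of $v_h$ in the first case, and by periodicity in the second. Your closing remark that neither series need converge absolutely under the stated hypotheses --- so that the identity should be read as an equality of matched partial sums whose difference is the vanishing boundary term --- is a genuine refinement of a point the paper's one-line proof glosses over.
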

\begin{proof}
Let $v_h \in L_h^\infty,$ $u_h\in L^2_h$ and $K\in \mathbb{N}.$ We develop the sum $\sum_{i=-K}^{K}v_h(x_i)\cdot D^+u_h(x_i)$ and 
we make a change in index, then (\ref{IPPd}) holds by using the property 
$\lim_{|i|\rightarrow +\infty}|u_h(x_i)|=0$ and the assumption  ($v_h \in L_h^\infty$).  
In the second case, we simply develop the sum $\sum_{i=1}^{N}v_h(x_i)\cdot D^+u_h(x_i)$ and 
make a change in index, then we use the periodicity of $v_h$ and $u_h.$
\end{proof}

\begin{definition}
Let $h>0.$  We denote $C_i=[x_i,x_{i+1}[, i\in \mathbb{Z}.$ Let $P_h$ and $Q_h$ be the two interpolation operators 
defined, for all $v_h=\{v_h(x_i)\}_i\in (\mathbb{R}^3)^{\mathbb{Z}_h}$, by  
$$Q_hv_h:\quad \mathbb{R}\rightarrow \mathbb{R}^3,\quad x\mapsto Q_hv_h(x)=v_h(x_i),\quad \forall x\in C_i,\forall i\in \mathbb{Z},$$
$$P_hv_h:\quad \mathbb{R}\rightarrow \mathbb{R}^3,\quad x\mapsto P_hv_h(x)=v_h(x_i)+D^+v_h(x_i)(x-x_i),\quad \forall x\in C_i,\forall i\in \mathbb{Z}.$$
\end{definition}
In all what follows we keep the notation of this definition. We have the following important lemma 
\begin{lemma}\label{lem_born}
\begin{enumerate}
 \item  Let $\{v_h\}_h \in (\R^3)^{\Z_h}$ be such that
\begin{equation*}\left\{\begin{array}{lr}
v_h\in H^{-1}_h,\quad \forall h>0,&\\
\exists C>0,\quad |v_h|_{H^{-1}_h} < C.
\end{array}\right.\end{equation*}
Then the sequence $\{P_hv_h\}_h$ is bounded in $H^{-1}(\mathbb{R}).$
\item  Let $l>0$ and $\{v_h\}_h$ be a sequence satisfying
\begin{equation*}\left\{\begin{array}{lr}
h=\frac{l}{N},&\\
v_h\in P_{l,N},\quad \forall N \in \mathbb{N},&\\
\exists C>0,\quad |v_h|_{H^{-1}_{l,N}} < C,\quad \forall N \in \mathbb{N}.&
\end{array}\right.\end{equation*}
Then the sequence $\{P_hv_h\}_h$ is bounded in $H^{-1}(\mathbb{T}^l).$
\end{enumerate}

\end{lemma}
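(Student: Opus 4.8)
The plan is to use the duality characterisation $H^{-1}(\mathbb R)=\bigl(H^1(\mathbb R)\bigr)'$ and to show that the very constant $C$ that bounds $|v_h|_{H_h^{-1}}$ controls $\|P_hv_h\|_{H^{-1}(\mathbb R)}$ uniformly in $h$. Since $C_c^\infty(\mathbb R,\mathbb R^3)$ is dense in $H^1(\mathbb R,\mathbb R^3)$, it suffices to estimate the pairing $\langle P_hv_h,\phi\rangle=\int_{\mathbb R}P_hv_h\cdot\phi$ for $\phi\in C_c^\infty$ and to prove $|\langle P_hv_h,\phi\rangle|\le C'\,|v_h|_{H_h^{-1}}\,\|\phi\|_{H^1(\mathbb R)}$ with $C'$ independent of $h$ and $\phi$; restricting to compactly supported $\phi$ also keeps every sum below finite and sidesteps the question of whether $P_hv_h$ itself lies in $L^2$.

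The first step is to turn this continuous pairing into a discrete one. Writing $\Lambda_i$ for the continuous, piecewise linear ``hat'' function equal to $1$ at $x_i$, to $0$ at every other node, and supported on $[x_{i-1},x_{i+1}]$, one has $P_hv_h=\sum_i v_h(x_i)\Lambda_i$, so that
$$\langle P_hv_h,\phi\rangle=\sum_i v_h(x_i)\cdot\int_{\mathbb R}\Lambda_i\phi=\langle v_h,u_h\rangle_h,\qquad u_h(x_i):=\frac1h\int_{\mathbb R}\Lambda_i(x)\phi(x)\,dx.$$
The sequence $u_h$ has finitely many nonzero entries, hence lies in $L_h^2$, and the definition of $|\cdot|_{H_h^{-1}}$ gives $|\langle P_hv_h,\phi\rangle|\le|v_h|_{H_h^{-1}}\,|u_h|_{H_h^1}\le C\,|u_h|_{H_h^1}$. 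Everything therefore reduces to the uniform estimate $|u_h|_{H_h^1}\le C'\|\phi\|_{H^1(\mathbb R)}$, i.e.\ to bounding $\phi\mapsto u_h$ (the adjoint of $P_h$) as an operator $H^1(\mathbb R)\to H_h^1$ uniformly in $h$.

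The control of $|u_h|_h$ is the easy half: by Cauchy--Schwarz $\bigl|\int\Lambda_i\phi\bigr|^2\le\bigl(\int\Lambda_i\bigr)\bigl(\int\Lambda_i|\phi|^2\bigr)=h\int\Lambda_i|\phi|^2$, whence $|u_h|_h^2=\tfrac1h\sum_i\bigl|\int\Lambda_i\phi\bigr|^2\le\sum_i\int\Lambda_i|\phi|^2=\int_{\mathbb R}\bigl(\sum_i\Lambda_i\bigr)|\phi|^2=\|\phi\|_{L^2}^2$, using the partition of unity $\sum_i\Lambda_i\equiv1$. The main obstacle is the bound on $|D^+u_h|_h$, precisely because we are given \emph{no} control on the discrete derivatives of $v_h$, so the naive comparison of $P_h$ with the piecewise constant interpolant $Q_h$ is unavailable. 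Here I would exploit that $\int_{\mathbb R}(\Lambda_{i+1}-\Lambda_i)=0$: since $h^2D^+u_h(x_i)=\int(\Lambda_{i+1}-\Lambda_i)\phi=\int(\Lambda_{i+1}-\Lambda_i)(\phi-\bar\phi_i)$ for $\bar\phi_i$ the mean of $\phi$ over $\operatorname{supp}(\Lambda_{i+1}-\Lambda_i)=[x_{i-1},x_{i+2}]$, Cauchy--Schwarz together with $\|\Lambda_{i+1}-\Lambda_i\|_{L^2}=O(\sqrt h)$ and the one–dimensional Poincaré inequality $\|\phi-\bar\phi_i\|_{L^2([x_{i-1},x_{i+2}])}\le\tfrac{3h}{\pi}\|\phi'\|_{L^2([x_{i-1},x_{i+2}])}$ give $|D^+u_h(x_i)|\le C\,h^{-1/2}\|\phi'\|_{L^2([x_{i-1},x_{i+2}])}$. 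Squaring, multiplying by $h$, summing, and using that the windows $[x_{i-1},x_{i+2}]$ overlap with multiplicity at most three yields $|D^+u_h|_h^2\le 3C^2\|\phi'\|_{L^2}^2$, hence $|u_h|_{H_h^1}\le C'\|\phi\|_{H^1}$ and the claim on $\mathbb R$.

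Finally, item (2) is proved verbatim on $\mathbb T^l$: one takes $\phi\in C^\infty(\mathbb T^l)$, uses the $l$-periodic hat functions and the finite sums $\sum_{i=1}^N$, so that $P_hv_h=\sum_{i=1}^N v_h(x_i)\Lambda_i$ and $u_h\in P_{l,N}$. The partition of unity, the zero-average identity, the Poincaré inequality and the threefold-overlap counting are unchanged, and the bound $|v_h|_{H_{l,N}^{-1}}<C$ closes the argument exactly as above.
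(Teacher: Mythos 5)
Your proof is correct, but it follows a genuinely different route from the paper's. Both arguments start from the duality $\|P_hv_h\|_{H^{-1}(\mathbb{R})}=\sup_{\varphi\in\mathcal{D}(\mathbb{R})}\langle P_hv_h,\varphi\rangle/\|\varphi\|_{H^1(\mathbb{R})}$, but the paper then replaces the test function $\varphi$ by the piecewise linear interpolant $P_h\varphi_h$ of its nodal values $\varphi_h=\{\varphi(x_i)\}_i$. This creates an interpolation error $\langle P_hv_h,\varphi-P_h\varphi_h\rangle$, controlled through $h\|P_hv_h\|_{L^2(\mathbb{R})}$, plus leftover discrete terms of the form $h|v_h|_h$ and $h^2|D^+v_h|_h$; absorbing all of these forces the paper to prove the auxiliary inverse estimate $h|v_h|_h\leq\sqrt{h^2+4}\,|v_h|_{H_h^{-1}}$ (via $|D^+u_h|_h\leq\frac{2}{h}|u_h|_h$ from Lemma \ref{Pd}) and to use the comparison $\|\varphi\|_{H^1(\mathbb{R})}\geq\frac{1}{\sqrt{2}}\|P_h\varphi_h\|_{H^1(\mathbb{R})}$, which only holds for $h$ below a threshold $h_0$. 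You instead compute the adjoint of $P_h$ exactly: writing $P_hv_h=\sum_iv_h(x_i)\Lambda_i$ yields the identity $\langle P_hv_h,\phi\rangle=\langle v_h,u_h\rangle_h$ with $u_h(x_i)=h^{-1}\int_{\mathbb{R}}\Lambda_i\phi$, with no error term at all, so the whole lemma reduces to the uniform operator bound $|u_h|_{H_h^1}\leq C'\|\phi\|_{H^1(\mathbb{R})}$, which you get from the partition of unity $\sum_i\Lambda_i\equiv1$ (for $|u_h|_h$) and from the zero-mean identity $\int(\Lambda_{i+1}-\Lambda_i)=0$ combined with the local Poincar\'e--Wirtinger inequality and the finite overlap of the windows $[x_{i-1},x_{i+2}]$ (for $|D^+u_h|_h$). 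Your route buys explicit constants, no smallness restriction on $h$, and no need for any control of $|v_h|_h$ or $|D^+v_h|_h$; as you note, it does not even require making sense of $v_h$ or $P_hv_h$ as $L^2$ objects, since the test sequences $u_h$ are finitely supported. What the paper's route buys is that it stays entirely inside the discrete toolbox already set up ($P_h$, $Q_h$, $D^\pm$, and the inverse inequality of Lemma \ref{Pd}) rather than introducing the hat-function/adjoint viewpoint. Both arguments adapt verbatim to the periodic case (in yours, only the overlap count of the wrapped windows changes for the trivial cases $N\leq2$).
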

\begin{proof}
 We have 
\begin{eqnarray}\label{norm_du}
\|P_hv_h\|_{H^{-1}(\mathbb{R})}&=&\sup_{\varphi\in\mathcal{D}(\mathbb{R})}\frac{\langle P_hv_h,\varphi\rangle_{L^2(\mathbb{R})}}{\|\varphi\|_{H^1(\mathbb{R})}}\notag\\
&\leq& \sup_{\varphi\in\mathcal{D}(\mathbb{R})}\frac{\langle P_hv_h,P_h\varphi_h\rangle_{L^2(\mathbb{R})}}{\|\varphi\|_{H^1(\mathbb{R})}}+
\sup_{\varphi\in\mathcal{D}(\mathbb{R})}\frac{\langle P_hv_h,\varphi-P_h\varphi_h\rangle_{L^2(\mathbb{R})}}{\|\varphi\|_{H^1(\mathbb{R})}},
\end{eqnarray}
with $\varphi_h=\{\varphi(x_i)\}_i.$ Since 
$$\|\varphi-P_h\varphi_h\|_{L^2(\mathbb{R})}\leq h\|(\varphi-P_h\varphi_h)'\|_{L^2(\mathbb{R})}\quad (\text{Poincar\'{e}}),$$
we have 
\begin{eqnarray*}
\|\varphi\|^2_{H^1(\mathbb{R})}&=&\|P_h\varphi_h\|^2_{H^1(\mathbb{R})}+\|\varphi-P_h\varphi_h\|^2_{H^1(\mathbb{R})}+
2\int_{\mathbb{R}}(P_h\varphi_h).(\varphi-P_h\varphi_h)dx+2\int_{\mathbb{R}}(P_h\varphi_h)'.(\varphi-P_h\varphi_h)'dx\\
&\geq& \|P_h\varphi_h\|^2_{H^1(\mathbb{R})}+\|\varphi-P_h\varphi_h\|^2_{H^1(\mathbb{R})}-
2h\|P_h\varphi_h\|_{L^2(\mathbb{R})}\|(\varphi-P_h\varphi_h)'\|_{L^2(\mathbb{R})}.
\end{eqnarray*}
Then there exists $h_0>0$ such that for all $h<h_0,$ we have 
\begin{eqnarray*}
\|\varphi\|^2_{H^1(\mathbb{R})}&\geq& \frac{1}{2}\left(\|P_h\varphi_h\|^2_{H^1(\mathbb{R})}+\|\varphi-P_h\varphi_h\|^2_{H^1(\mathbb{R})}\right)\\
&\geq& \frac{1}{2}\max\left(\|P_h\varphi_h\|^2_{H^1(\mathbb{R})}, \|\varphi-P_h\varphi_h\|^2_{H^1(\mathbb{R})}\right).
\end{eqnarray*}
We obtain by substituting in (\ref{norm_du}) 
\begin{equation}\label{norm_du1}
\|P_hv_h\|_{H^{-1}(\mathbb{R})}\leq \sup_{\varphi\in\mathcal{D}(\mathbb{R})}\frac{\langle P_hv_h,P_h\varphi_h\rangle_{L^2(\mathbb{R})}} {\frac{1}{\sqrt{2}}\|P_h\varphi_h\|_{H^1(\mathbb{R})}}
+\sqrt{2}h\|P_hv_h\|_{L^2(\mathbb{R})}.
\end{equation}
Next, we have 
\begin{eqnarray*}
\|P_h\varphi_h\|^2_{H^1(\mathbb{R})}&=&\sum_i\int_{x_i}^{x_{i+1}}\left|\frac{x_i-x}{h}\varphi(x_i)+\frac{x-x_i}{h}\varphi(x_{i+1})\right|^2dx+
\sum_ih\left|\frac{\varphi(x_i)-\varphi(x_{i+1})}{h}\right|^2dx\\
&=&\sum_i\frac{h}{3}\left(|\varphi(x_i)|^2+|\varphi(x_{i+1})|^2+\varphi(x_{i+1})\varphi(x_i)\right)+|D^+\varphi_h|^2_h\\
&\geq& \sum_i\frac{h}{6}\left(|\varphi(x_i)|^2+|\varphi(x_{i+1})|^2\right)+|D^+\varphi_h|^2_h,
\end{eqnarray*}
from which we can write
\begin{equation}\label{norm_du2}
\|P_h\varphi_h\|^2_{H^1(\mathbb{R})}\geq\frac{1}{3}|\varphi_h|^2_h+|D^+\varphi_h|^2_h\geq \frac{1}{3}|\varphi_h|^2_{H^1_h}.
\end{equation}
We have on the one hand
\begin{eqnarray}\label{norm_du3}
\langle P_hv_h,P_h\varphi_h\rangle_{L^2(\mathbb{R})}&=&\sum_i\int_{x_i}^{x_{i+1}}(v_h(x_i)+D^+v_h(x_i)(x-x_i)).(\varphi(x_i)+D^+\varphi_h(x_i)(x-x_i))dx \notag\\
&=&(v_h,\varphi_h)_h+\frac{h}{2}(v_h,D^+\varphi_h)_h+\frac{h}{2}(D^+v_h,\varphi_h)_h+\frac{h^2}{3}(D^+v_h,D^+\varphi_h)_h\notag\\
&=&(v_h,\varphi_h)_h+\frac{h}{2}(v_h,D^+\varphi_h)_h-\frac{h}{2}(v_h,D^-\varphi_h)_h+\frac{h^2}{3}(D^+v_h,D^+\varphi_h)_h\notag\\
&\leq& (v_h,\varphi_h)_h+h|v_h|_h|D^+\varphi_h|_h+\frac{h^2}{3}|D^+v_h|_h|D^+\varphi_h|_h,
\end{eqnarray}
and on the other hand
\begin{eqnarray}\label{norm_du4}
\|P_hv_h\|^2_{L^2(\mathbb{R})}&=&\sum_i\int_{x_i}^{x_{i+1}}|v_h(x_i)+D^+v_h(x_i)(x-x_i)|^2dx\notag\\
&\leq&2\sum_i\int_{x_i}^{x_{i+1}}(|v_h(x_i)|^2+|D^+v_h(x_i)|^2(x-x_i)^2)dx\notag\\
&=&2|v_h|^2_h+\frac{2h^2}{3}|D^+v_h|^2_h.
\end{eqnarray}
Then by combining (\ref{norm_du1}), (\ref{norm_du2}), (\ref{norm_du3}) and (\ref{norm_du4}) we get 
\begin{eqnarray*}
\|P_hv_h\|_{H^{-1}(\mathbb{R})}&\leq&
\sup_{\varphi\in\mathcal{D}(\mathbb{R})}\frac{(v_h,\varphi_h)_h+h|v_h|_h|D^+\varphi_h|_h+\frac{h^2}{3}|D^+v_h|_h|D^+\varphi_h|_h}{\frac{1}{\sqrt{6}}|\varphi_h|_{H^1_h}}+
2h|v_h|_h+\frac{2h^2}{\sqrt{3}}|D^+v_h|_h\\
&\leq&\sqrt{6}(|v_h|_{H^{-1}_h}+h|v_h|_h+\frac{h^2}{3}|D^+v_h|_h)+2h|v_h|_h+\frac{2h^2}{\sqrt{3}}|D^+v_h|_h\\
&\leq& \sqrt{6}|v_h|_{H^{-1}_h}+(\sqrt{6}+2)h|v_h|_h+2(\frac{\sqrt{6}}{3}+\frac{2}{\sqrt{3}})h|v_h|_h \footnote{On a utilis\'{e} (\ref{Pd})} \\
&\leq& \sqrt{6}|v_h|_{H^{-1}_h}+(\sqrt{6}+2+2\frac{2+\sqrt{2}}{\sqrt{3}})h|v_h|_h\\
&\leq& C|v_h|_{H^{-1}_h},
\end{eqnarray*}
since
\begin{eqnarray*}
|v_h|_{H_h^{-1}}&=&\sup_{u_h}\frac{(v_h,u_h)_h}{|u_h|_{H_h^1}}\\
&\geq&\sup_{u_h}\frac{(v_h,u_h)_h}{[|u_h|^2_h+\frac{4}{h^2}|u_h|^2_h]^{\frac{1}{2}}}\\
&=&\frac{h}{\sqrt{h^2+4}}\sup_{u_h}\frac{( v_h,u_h)_h}{|u_h|_h}\\
&\geq&\frac{h}{\sqrt{h_0^2+4}}|v_h|_h,\quad \forall h\geq h_0.
\end{eqnarray*}
The  proof of 2 is similar to that of 1.
\end{proof}

The following lemma shows that the space $L_h^2,$ equipped with the norm $|.|_{H_h^1},$  is continuously embedded in $L_h^\infty.$
\begin{lemma}\label{equiv_norms}
There exist two constants $C_1, C_2>0$ such that for all $h>0$ and  $v_h\in L_h^2$ we have 
$$C_2|v_h|_h\leq \|P_hv_h\|_{L^2(\mathbb{R})}\leq C_1|v_h|_h.$$
\end{lemma}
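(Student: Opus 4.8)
The plan is to compute $\|P_hv_h\|^2_{L^2(\mathbb{R})}$ exactly, cell by cell, and then sandwich the resulting quadratic form between multiples of $|v_h|^2_h$ using Young's inequality, taking care that the constants are independent of $h$. On each cell $C_i=[x_i,x_{i+1}[$ the interpolant $P_hv_h$ is, by definition, the affine segment joining $v_h(x_i)$ to $v_h(x_{i+1})$. Substituting $s=(x-x_i)/h$ and using $\int_0^1(1-s)^2\,ds=\int_0^1 s^2\,ds=\tfrac13$, $\int_0^1 s(1-s)\,ds=\tfrac16$, I get
$$\int_{x_i}^{x_{i+1}}|P_hv_h|^2\,dx=\frac{h}{3}\left(|v_h(x_i)|^2+v_h(x_i)\cdot v_h(x_{i+1})+|v_h(x_{i+1})|^2\right),$$
which is exactly the local quantity that already appeared in the derivation of \eqref{norm_du2} in the proof of Lemma \ref{lem_born}. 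Summing over $i\in\mathbb{Z}$ then gives a closed expression for $\|P_hv_h\|^2_{L^2(\mathbb{R})}$ as a series, which converges absolutely because $v_h\in L^2_h$.

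For the upper bound I would use $v_h(x_i)\cdot v_h(x_{i+1})\leq\tfrac12(|v_h(x_i)|^2+|v_h(x_{i+1})|^2)$, so that each cell contributes at most $\tfrac{h}{2}(|v_h(x_i)|^2+|v_h(x_{i+1})|^2)$; reindexing the shifted sum via $\sum_i\big(|v_h(x_i)|^2+|v_h(x_{i+1})|^2\big)=2\sum_i|v_h(x_i)|^2$ (legitimate by absolute convergence) yields $\|P_hv_h\|^2_{L^2(\mathbb{R})}\leq h\sum_i|v_h(x_i)|^2=|v_h|^2_h$, i.e. $C_1=1$. For the lower bound I would use $v_h(x_i)\cdot v_h(x_{i+1})\geq-\tfrac12(|v_h(x_i)|^2+|v_h(x_{i+1})|^2)$, so each cell contributes at least $\tfrac{h}{6}(|v_h(x_i)|^2+|v_h(x_{i+1})|^2)$; the same reindexing gives $\|P_hv_h\|^2_{L^2(\mathbb{R})}\geq\tfrac13|v_h|^2_h$, i.e. $C_2=1/\sqrt{3}$. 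Both constants are manifestly independent of $h$, which is the whole point.

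There is no genuine obstacle here: the statement is just the $h$-uniform spectral equivalence of the $2\times2$ elementwise mass matrix $\left(\begin{smallmatrix}1&1/2\\1/2&1\end{smallmatrix}\right)$, whose eigenvalues $\tfrac32$ and $\tfrac12$ produce precisely the bounds $\tfrac12(|a|^2+|b|^2)\leq|a|^2+a\cdot b+|b|^2\leq\tfrac32(|a|^2+|b|^2)$ used above. The only two points that require a word of care are the uniformity in $h$ (automatic, since the factor $h$ from the integral over each cell cancels against the $h$ in $|v_h|^2_h$ and the Young constants carry no $h$) and the convergence of the reindexed series (guaranteed by $v_h\in L^2_h$, so that no boundary contributions survive at infinity). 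The periodic case $P_{l,N}$ with $|.|_{l,N}$ is identical word for word, the infinite sums being replaced by sums over a period.
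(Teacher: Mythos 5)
Your proof is correct and follows essentially the same route as the paper's: an exact cell-by-cell computation of $\|P_hv_h\|^2_{L^2(\mathbb{R})}$, Young's inequality on the cross terms, and a reindexing of the absolutely convergent sums. One remark worth recording: your cell formula $\tfrac{h}{3}\left(|v_h(x_i)|^2+v_h(x_i)\cdot v_h(x_{i+1})+|v_h(x_{i+1})|^2\right)$ is the correct one (it agrees with the computation \eqref{norm_du2} inside the proof of Lemma \ref{lem_born}), whereas the paper's own proof of this lemma carries an arithmetic slip --- the cross term of the cell integral is written with coefficient $\tfrac{1}{2}h^2$ instead of $h^2$ --- which leads it to the intermediate expression $\tfrac{5}{6}|v_h(x_i)|^2-\tfrac{1}{6}v_h(x_i)\cdot v_h(x_{i+1})+\tfrac{1}{3}|v_h(x_{i+1})|^2$ and to the claimed bounds $|v_h|_h^2\leq\|P_hv_h\|^2_{L^2(\mathbb{R})}\leq\tfrac{4}{3}|v_h|_h^2$, whose lower constant is in fact false (for $v_h$ supported at a single node one has $\|P_hv_h\|^2_{L^2(\mathbb{R})}=\tfrac{2}{3}|v_h|_h^2$), while your constants $C_1=1$ and $C_2=1/\sqrt{3}$, coming from the eigenvalues $\tfrac{3}{2}$ and $\tfrac{1}{2}$ of the elementwise mass matrix, are valid and asymptotically sharp.
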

\begin{proof}
Since  
\begin{eqnarray*}
\int_{x_i}^{x_{i+1}}|u_h(x_i)+D^+u_h(x_i)(x-x_i)|^2dx &=&h|u_h(x_i)|^2+\frac{1}{2}h^2u_h(x_i)\cdot D^+u_h(x_i)+\frac{1}{3}h^3|D^+u_h(x_i)|^2\\
&=& \frac{5}{6}h|u_h(x_i)|^2- \frac{1}{6}hu_h(x_i)\cdot D^+u_h(x_i)+\frac{1}{3}h|u_h(x_{i+1})|^2,
\end{eqnarray*}
and 
$$\frac{3}{4}|u_h(x_i)|^2+\frac{1}{4}|u_h(x_{i+1})|^2\leq\frac{5}{6}|u_h(x_i)|^2- \frac{1}{6}u_h(x_i) \cdot D^+u_h(x_i)+\frac{1}{3}|u_h(x_{i+1})|^2\leq \frac{11}{12}|u_h(x_i)|^2+\frac{5}{12}|u_h(x_{i+1})|^2,$$
we have 
$$|v_h|_h^2\leq \|P_hv_h\|^2_{L^2(\mathbb{R})}\leq \frac{4}{3}|v_h|_h^2.$$
\end{proof}

\begin{corollary}\label{Sobolev_discret}
If $v_h\in L_h^2\subset L_h^\infty,$ then $P_hv_h\in H^1(\mathbb{R})$ and there exists $C>0$ (which does not depend on $h$) such that 
$$|v_h|_{L_h^\infty}\leq C|v_h|_{H_h^1}.$$
\end{corollary}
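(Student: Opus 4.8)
The plan is to transfer the desired discrete inequality to the continuous setting, where it becomes the ordinary one-dimensional Sobolev embedding $H^1(\mathbb{R})\hookrightarrow L^\infty(\mathbb{R})$, and then to read the result back using the fact that the interpolant $P_hv_h$ agrees with $v_h$ at the grid points. The whole argument is essentially a bookkeeping of the norm equivalences already furnished by Lemma \ref{equiv_norms}.

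First I would verify that $P_hv_h\in H^1(\mathbb{R})$. The function $P_hv_h$ is continuous, being affine on each cell $C_i$ and matching the value $v_h(x_{i+1})$ at each interface; its distributional derivative is the piecewise constant function equal to $D^+v_h(x_i)$ on $C_i$. Hence $\|(P_hv_h)'\|^2_{L^2(\mathbb{R})}=h\sum_i|D^+v_h(x_i)|^2=|D^+v_h|^2_h<+\infty$, while Lemma \ref{equiv_norms} gives $\|P_hv_h\|_{L^2(\mathbb{R})}\leq \frac{2}{\sqrt 3}|v_h|_h<+\infty$. Combining these, $P_hv_h\in H^1(\mathbb{R})$ with
\[
\|P_hv_h\|^2_{H^1(\mathbb{R})}=\|P_hv_h\|^2_{L^2(\mathbb{R})}+|D^+v_h|^2_h\leq \tfrac{4}{3}|v_h|^2_h+|D^+v_h|^2_h\leq \tfrac{4}{3}|v_h|^2_{H_h^1}.
\]

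Next I would invoke the one-dimensional Sobolev inequality $\|f\|_{L^\infty(\mathbb{R})}\leq \|f\|_{H^1(\mathbb{R})}$, which follows from $\|f\|^2_{L^\infty(\mathbb{R})}\leq 2\|f\|_{L^2(\mathbb{R})}\|f'\|_{L^2(\mathbb{R})}\leq \|f\|^2_{H^1(\mathbb{R})}$ for $f\in H^1(\mathbb{R})$, and whose constant is universal and in particular independent of $h$. Since $P_hv_h(x_i)=v_h(x_i)$ at every node, we have $|v_h|_{L_h^\infty}=\sup_i|v_h(x_i)|=\sup_i|P_hv_h(x_i)|\leq \|P_hv_h\|_{L^\infty(\mathbb{R})}$. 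Chaining the three estimates yields $|v_h|_{L_h^\infty}\leq \|P_hv_h\|_{H^1(\mathbb{R})}\leq \frac{2}{\sqrt 3}|v_h|_{H_h^1}$, which is the claim.

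The only point requiring care — and the place where one might naively lose uniformity — is the independence of the final constant on $h$. This is guaranteed because the Sobolev constant is absolute and the norm comparison of Lemma \ref{equiv_norms} holds with $h$-independent constants, so no power of $h$ enters the chain. I therefore do not expect any genuine obstacle here: the statement is a direct consequence of the already established equivalence between the discrete norms and the $L^2$ and $H^1$ norms of the piecewise linear interpolant, together with the continuous embedding.
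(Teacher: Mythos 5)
Your proof is correct and follows essentially the same route as the paper: pass to the piecewise linear interpolant $P_hv_h$, bound its $H^1(\mathbb{R})$ norm by $|v_h|_{H_h^1}$ via Lemma \ref{equiv_norms} and the identity $(P_hv_h)'=Q_hD^+v_h$, and conclude with the continuous Sobolev embedding $H^1(\mathbb{R})\hookrightarrow L^\infty(\mathbb{R})$. The only (immaterial) difference is that the paper uses the exact equality $\|P_hv_h\|_{L^\infty(\mathbb{R})}=|v_h|_{L_h^\infty}$, whereas you use only the inequality $|v_h|_{L_h^\infty}\leq\|P_hv_h\|_{L^\infty(\mathbb{R})}$, which is all that is needed.
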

\begin{proof}
Since $\frac{dP_hv_h}{dx}=Q_hD^+v_h \in L^2(\mathbb{R}),$  we have $P_hv_h\in H^1(\mathbb{R})$ (Lemma \ref{Pd}). On the other hand, we have 
\begin{eqnarray*}
\|P_hv_h\|_{L^\infty(\mathbb{R})}&=&\sup_{i\in \mathbb{Z}}\sup_{x\in [x_i, x_{i+1}[}|u_h(x_i)+D^+u_h(x_i)(x-x_i)|\\
&=&\sup_{i\in \mathbb{Z}}\max(|u_h(x_i)|,|u_h(x_{i+1})|)\\
&=& |v_h|_{L_h^\infty}.
\end{eqnarray*}
The space $L^\infty(\mathbb{R})$ is continuously embedded in the space $H^1(\mathbb{R})$ (Sobolev) and there exists $\tilde{C}>0$ such that 
$$\|v\|_{L^\infty(\mathbb{R})}\leq \tilde{C}\|v\|_{H^1(\mathbb{R})},\quad \forall v\in H^1(\mathbb{R}).$$
Consequently, 
\begin{eqnarray*}
|v_h|^2_{L_h^\infty}&=&\|P_hv_h\|^2_{L^\infty(\mathbb{R})}\\
&\leq& \tilde{C}^2 \|P_hv_h\|^2_{H^1(\mathbb{R})}\\
&\leq& \tilde{C}^2(C_1^2|v_h|_h^2+\|Q_hD^+v_h\|^2_{L^2(\mathbb{R})})\\
&\leq&  C^2|v_h|^2_{H_h^1}.
\end{eqnarray*}
\end{proof}

\section{Proofs of main theorems}
Let us first show some important properties. 
\subsection{Convergence properties}
\begin{lemma}\label{lem_cv_l2f}
\begin{enumerate}
 \item Let $\{v_h\}_h \in (\R^3)^{\Z_h}$ be such that   
$$v_h\in L^2_h,\quad \forall h,$$ and 
\begin{equation}\label{suit_borne}
\exists C>0,\quad |v_h|_h\leq C.
\end{equation}
Then the sequence  $\{P_hv_h-Q_hv_h\}_h$ converges weakly to zero in $L^2(\mathbb{R}).$
\item Let $l>0$ and $\{v_h\}_h$ be a sequence satisfying   
\begin{equation*}\left\{\begin{array}{lr}
h=\frac{l}{N},&\\
v_h\in P_{l,N},\quad \forall N\in \mathbb{N},&
\end{array}\right.\end{equation*}
and 
\begin{equation}
\exists C>0,\quad |v_h|_{l,N}\leq C.
\end{equation}
Then $\{P_hv_h-Q_hv_h\}_h$ converges weakly to zero in $L^2(\mathbb{T}^l).$ 
\end{enumerate}
Moreover, if $\{Q_hv_h\}_h$ converges to $v$ in $L^2$ ($L^2(\mathbb{R})$ or $L^2(\mathbb{T}^l)$), then
$\{P_hv_h\}_h$ converges to the same limit in $L^2.$
\end{lemma}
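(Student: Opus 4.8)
The plan is to start from the explicit expression of the difference on each cell. On $C_i=[x_i,x_{i+1}[$ one has $(P_hv_h-Q_hv_h)(x)=D^+v_h(x_i)(x-x_i)$, so integrating $(x-x_i)^2$ over $C_i$ gives the norm identity
$$\|P_hv_h-Q_hv_h\|_{L^2(\mathbb{R})}^2=\frac{h^3}{3}\sum_i|D^+v_h(x_i)|^2=\frac{h^2}{3}|D^+v_h|_h^2.$$
Invoking Lemma \ref{Pd}, namely $|D^+v_h|_h\le\frac2h|v_h|_h$, together with the bound (\ref{suit_borne}) shows that $\{P_hv_h-Q_hv_h\}_h$ is bounded in $L^2(\mathbb{R})$ (by $\tfrac{2}{\sqrt3}C$). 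Since a bounded sequence in $L^2$ converges weakly to zero as soon as its pairing with every element of a dense set tends to zero, it suffices to test against $\varphi\in\mathcal{D}(\mathbb{R})$.

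For the pairing I would write
$$\langle P_hv_h-Q_hv_h,\varphi\rangle_{L^2(\mathbb{R})}=\sum_iD^+v_h(x_i)\cdot\int_{x_i}^{x_{i+1}}(x-x_i)\varphi(x)\,dx$$
and, on each $C_i$, split $\varphi(x)=\varphi(x_i)+(\varphi(x)-\varphi(x_i))$. The principal part equals $\frac h2(D^+v_h,\varphi_h)_h$ with $\varphi_h=\{\varphi(x_i)\}_i$; the discrete integration by parts (\ref{IPPd}) rewrites it as $-\frac h2(v_h,D^-\varphi_h)_h$, which Cauchy--Schwarz bounds by $\frac h2|v_h|_h|D^-\varphi_h|_h=O(h)$, since $|D^-\varphi_h|_h$ is a Riemann sum for $\|\varphi'\|_{L^2}$ and stays bounded. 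For the remainder, $|\varphi(x)-\varphi(x_i)|\le h\|\varphi'\|_{L^\infty}$ on $C_i$ gives a per-cell bound $\tfrac{h^3}{2}\|\varphi'\|_{L^\infty}|D^+v_h(x_i)|$, and only the $|S|=O(1/h)$ cells meeting $\mathrm{supp}\,\varphi$ contribute; a Cauchy--Schwarz in $i$ then bounds $\sum_{i\in S}|D^+v_h(x_i)|$ by $|S|^{1/2}\big(\sum_i|D^+v_h(x_i)|^2\big)^{1/2}=O(h^{-2})|v_h|_h$, so the remainder is $O(h)$ as well. Letting $h\to0$ yields the weak limit zero.

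Part 2 is identical after replacing sums over $\mathbb{Z}$ by sums over a single period, $|\cdot|_h$ by $|\cdot|_{l,N}$, and (\ref{IPPd}) by (\ref{IPPdp}); on the compact torus the support bookkeeping is automatic. For the final assertion I would recast the norm identity through the shift $\tau_hf(x)=f(x-h)$: a direct computation gives
$$\|P_hv_h-Q_hv_h\|_{L^2}^2=\tfrac13\|Q_hv_h-\tau_hQ_hv_h\|_{L^2}^2.$$
If $Q_hv_h\to v$ in $L^2$, then writing $Q_hv_h-\tau_hQ_hv_h=(Q_hv_h-v)+(v-\tau_hv)+\tau_h(v-Q_hv_h)$ and using that $\tau_h$ is an isometry together with the strong continuity of translations ($\|v-\tau_hv\|_{L^2}\to0$) gives $\|Q_hv_h-\tau_hQ_hv_h\|_{L^2}\to0$. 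Hence $P_hv_h-Q_hv_h\to0$ strongly, and therefore $P_hv_h=Q_hv_h+(P_hv_h-Q_hv_h)\to v$ strongly.

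The main obstacle is the remainder estimate in the weak-convergence step: because $D^+v_h$ is controlled only in the rescaled norm $|\cdot|_h$ and may be of pointwise size $O(1/h)$, a naive bound fails, and one must exploit the compact support of the test function to limit the number of active cells to $O(1/h)$ so that the factor $|S|^{1/2}$ in Cauchy--Schwarz recovers the missing power of $h$. A second, conceptual point is that the weak convergence in parts 1--2 cannot be upgraded to strong convergence in general---the norm identity shows the difference retains an $L^2$ mass comparable to $h|D^+v_h|_h$---so the final assertion genuinely depends on the extra hypothesis $Q_hv_h\to v$, which enters precisely through the strong continuity of translations.
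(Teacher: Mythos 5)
Your proof is correct and follows essentially the same route as the paper: the uniform $L^2$ bound via the norm identity $\|P_hv_h-Q_hv_h\|_{L^2}^2=\tfrac{h^2}{3}|D^+v_h|_h^2$, pairing against the test function split through $Q_h\varphi$ with discrete integration by parts (\ref{IPPd}) for the principal term, and the translation identity $\|P_hv_h-Q_hv_h\|_{L^2}^2=\tfrac13\|Q_hv_h-\tau_hQ_hv_h\|_{L^2}^2$ plus strong continuity of translations for the final strong-convergence assertion. The only deviation is cosmetic: for the remainder you use an $L^\infty$ bound on $\varphi-\varphi(x_i)$ with support-counting and a discrete Cauchy--Schwarz, whereas the paper pairs $P_hv_h-Q_hv_h$ (already bounded in $L^2$) against $\varphi-Q_h\varphi_h$ and invokes the Poincar\'e-type bound $\|\varphi-Q_h\varphi_h\|_{L^2}\le \tfrac{h}{\sqrt{2}}\|\varphi'\|_{L^2}$; both yield the same $O(h)$ decay.
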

\begin{proof}
We write
\begin{eqnarray}\label{a}
\|P_hv_h-Q_hv_h\|^2_{L^2(\mathbb{R})}&=&\sum_i\int_{x_i}^{x_{i+1}}|D^+v_h(x_i)|^2(x-x_i)^2dx\notag\\
&\leq&\frac{1}{3}h^3\sum_i|D^+v_h(x_i)|^2\notag\\
&=&\frac{1}{3}h^2|D^+v_h|^2_h\notag\\
&\leq& \frac{4}{3}|v_h|^2_h\notag\\
&\leq& \frac{4}{3}C^2.
\end{eqnarray}
Furthermore, for all $\varphi\in \mathcal{D}(\mathbb{R}),$ we have 
\begin{eqnarray}\label{b}
|\langle P_hv_h-Q_hv_h,\varphi\rangle_{L^2(\mathbb{R})}| &\leq& |\langle P_hv_h-Q_hv_h,Q_h\varphi\rangle_{L^2(\mathbb{R})}|\notag\\
                                                         &&+ \|P_hv_h-Q_hv_h\|_{L^2(\mathbb{R})}\|\varphi-Q_h\varphi_h\|_{L^2(\mathbb{R})},
\end{eqnarray}
where $\varphi_h=\{\varphi_(x_i)\}_i.$ We have on the one hand
\begin{eqnarray}\label{c}
\|\varphi-Q_h\varphi_h\|^2_{L^2(\mathbb{R})}&=&\sum_i\int_{x_i}^{x_{i+1}}|\varphi(x)-\varphi(x_i)|^2dx\notag\\
&=& \sum_i\int_{x_i}^{x_{i+1}}|\int_{x_i}^{x}\varphi'(s)ds|^2dx\notag\\
&\leq& \sum_i\int_{x_i}^{x_{i+1}}(\int_{x_i}^{x}|\varphi'(s)|^2ds)(x-x_i)dx\notag\\
&\leq& \frac{h^2}{2}\sum_i\int_{x_i}^{x}|\varphi'(s)|^2ds\notag\\
&=& \frac{h^2}{2}\|\varphi'\|^2_{L^2(\mathbb{R})}.
\end{eqnarray}
On the other hand, we have 
\begin{eqnarray}\label{d}
|\langle P_hv_h-Q_hv_h,Q_h\varphi\rangle_{L^2(\mathbb{R})}|&=& \frac{h}{2}|\langle D^+v_h,\varphi_h\rangle_h|\notag\\
&=&\frac{h}{2}|\langle v_h,D^-\varphi_h\rangle_h|\notag\\
&\leq& \frac{h}{2}|v_h|_h|D^-\varphi_h|_h\notag\\
&\leq& \frac{1}{2}C\left[h\sum_i|\int_{x_{i-1}}^{x_i}\varphi'(s)ds|^2\right]^{\frac{1}{2}}\notag\\
&\leq& \frac{1}{2}Ch\|\varphi'\|_{L^2(\mathbb{R})}.
\end{eqnarray}
Then combining (\ref{a}), (\ref{b}), (\ref{c}) and (\ref{d}), we obtain
$$|\langle P_hv_h-Q_hv_h,\varphi\rangle_{L^2(\mathbb{R})}|\leq (\frac{2}{\sqrt{6}}+\frac{1}{2})C\|\varphi'\|_{L^2(\mathbb{R})}h.$$
Thus the proof of 1. is completed. The proof of 2 is similar to that of 1. To prove the strong convergence property, let
$v\in L^2,$ then it suffices to note that  
\begin{eqnarray*}
\|P_hv_h-Q_hv_h\|^2_{L^2}&=&\sum_i\int_{x_i}^{x_{i+1}}|D^+v_h(x_i)|^2(x-x_i)^2dx\notag\\
&=&\frac{1}{3}h^3\sum_i|D^+v_h(x_i)|^2\notag\\
&=&\frac{1}{3}\|\tau_{-h}Q_hv_h-Q_hv_h\|^2_{L^2}\notag,\\
\end{eqnarray*}
with $\tau_{h}w = w(\cdot-h),$ and  
\begin{eqnarray*}
\|\tau_{-h}Q_hv_h-Q_hv_h\|_{L^2}&\leq& \|\tau_{-h}Q_hv_h-v\|_{L^2} +\|Q_hv_h-v\|_{L^2}\\
&\leq& \|\tau_hv-v\|_{L^2} +2\|Q_hv_h-v\|_{L^2}.\\
\end{eqnarray*}
Thus the fact that $\lim_{h\rightarrow0}\|\tau_hv-v\|_{L^2} = 0$ completes the proof. 
\end{proof}

\begin{lemma}\label{lem_cv_h-1}
\begin{enumerate}
 \item  Let $v \in H^{-1}(\mathbb{R}),$ and $\{v_h\}_h\in (\R^3)^{\Z_h}$ be  such that the sequence $\{Q_hv_h\}_h$ converges to $v$ in $H^{-1}(\mathbb{R})$ 
weak star. Then the sequence $\{P_hv_h\}_h$ converges to $v$ in $H^{-1}(\mathbb{R})$ weak star.
\item Let $l>0,$ $v^l \in H^{-1}(\mathbb{T}^l)$ and $\{v_h\}_h$ be a sequence satisfying
\begin{equation*}\left\{\begin{array}{lr}
h=\frac{l}{N},&\\
v_h\in l_N,\quad \forall N\in \mathbb{N},&\\
Q_hv_h\rightarrow v^l \quad \text{in}\quad H^{-1}(\mathbb{T}^l)\quad \text{weak star}.&
\end{array}\right.\end{equation*}
Then $\{P_h^lv_h\}_h$ converges to $v^l$ in $H^{-1}(\mathbb{T}^l)$ weak star.
\end{enumerate}
\end{lemma}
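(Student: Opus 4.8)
The plan is to exploit the hypothesis $Q_hv_h\to v$ in order to reduce the statement to showing that the difference $w_h:=P_hv_h-Q_hv_h$ converges to $0$ in $H^{-1}(\mathbb{R})$ weak star. Since the predual $H^1(\mathbb{R})$ is separable, this is in turn equivalent to two facts: first, that $\{P_hv_h\}_h$ is bounded in $H^{-1}(\mathbb{R})$; and second, that $\langle w_h,\varphi\rangle_{L^2(\mathbb{R})}\to 0$ for every $\varphi$ in the dense subset $\mathcal{D}(\mathbb{R})$. The boundedness follows from the hypothesis itself: a weak-star convergent sequence is bounded, so $\{Q_hv_h\}_h$ is bounded in $H^{-1}(\mathbb{R})$, which controls $|v_h|_{H_h^{-1}}$ and hence, by Lemma \ref{lem_born}, bounds $\{P_hv_h\}_h$ in $H^{-1}(\mathbb{R})$.

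For the convergence of the pairings I would compute $\langle w_h,\varphi\rangle$ directly from the explicit form of $w_h$, which on each cell $C_i=[x_i,x_{i+1})$ equals $D^+v_h(x_i)(x-x_i)$. Writing $\varphi_h=\{\varphi(x_i)\}_i$ and splitting $\varphi=Q_h\varphi+(\varphi-Q_h\varphi)$, the first contribution is exactly $\frac{h}{2}(D^+v_h,\varphi_h)_h$. The crucial point is that this is of order $h$: since $\varphi$ has compact support, the discrete integration by parts formula \eqref{IPPd} applies with no boundary term and yields $(D^+v_h,\varphi_h)_h=-(v_h,D^-\varphi_h)_h$, which is bounded by $|v_h|_{H_h^{-1}}\,|D^-\varphi_h|_{H_h^1}$; the second factor stays bounded uniformly in $h$ because $\varphi$ is smooth with compact support. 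The remaining contribution $\langle w_h,\varphi-Q_h\varphi\rangle$ is handled by an analogous summation-by-parts (Abel) manipulation, recasting it as $h^{-1}$ times a pairing $(v_h,\psi_h)_h$ whose discrete weight $\psi_h$, built from the second-order remainder of $\varphi$, is $O(h^2)$ in $H_h^1$. Both contributions are therefore $O(h)$ and vanish as $h\to 0$.

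Putting the two facts together, a standard subsequence argument concludes: being bounded in $H^{-1}(\mathbb{R})$, every subsequence of $\{P_hv_h\}_h$ admits a weak-star convergent sub-subsequence, whose limit must coincide with $v$ since the two agree against the dense family $\mathcal{D}(\mathbb{R})$; uniqueness of the limit then forces the whole sequence to converge weak star to $v$. The periodic statement (part 2) is proved identically, replacing $\mathbb{R}$ by $\mathbb{T}^l$, the dense test class by smooth $l$-periodic functions, and using \eqref{IPPdp} in place of \eqref{IPPd}; here no decay at infinity is needed, as all sums run over a single period. The main obstacle is precisely that only $H^{-1}$ control on $v_h$ is available: one cannot invoke the $L^2$ identity $\|w_h\|_{L^2}^2=\tfrac13 h^2|D^+v_h|_h^2$ from the proof of Lemma \ref{lem_cv_l2f}, and the entire difficulty lies in re-expressing $\langle w_h,\varphi\rangle$ purely through discretely $H_h^{-1}$-controllable pairings $(v_h,\psi_h)_h$, which is made possible by the smoothness and compact support of the test function.
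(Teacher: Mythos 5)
Your overall architecture (split $P_hv_h=Q_hv_h+w_h$, establish a uniform $H^{-1}(\mathbb{R})$ bound, test $w_h$ against $\mathcal{D}(\mathbb{R})$, conclude by density and boundedness) matches the paper's, and your computations of the two contributions to $\langle w_h,\varphi\rangle_{L^2(\mathbb{R})}$ are essentially correct \emph{granted} your starting point. The genuine gap is precisely that starting point: the assertion that boundedness of $\{Q_hv_h\}_h$ in $H^{-1}(\mathbb{R})$ (which does follow from weak-star convergence by Banach--Steinhaus) ``controls $|v_h|_{H_h^{-1}}$.'' This implication is the technical heart of the lemma, it is not available as a black box anywhere in the paper, and your whole proof rests on it: both the appeal to Lemma \ref{lem_born} and your two $O(h)$ estimates use the uniform bound $|v_h|_{H_h^{-1}}\leq C$. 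Passing from a bound on the continuous functional $Q_hv_h$ (tested against $H^1(\mathbb{R})$) to a bound on the discrete functional $v_h$ (tested against discrete $u_h$ in the $|\cdot|_{H_h^1}$ norm) requires producing, for every $u_h\in L_h^2$, a genuine $H^1(\mathbb{R})$ function $\psi$ whose cell averages satisfy $\frac1h\int_{x_i}^{x_{i+1}}\psi\,dx=u_h(x_i)$ with $\|\psi\|_{H^1(\mathbb{R})}\leq C|u_h|_{H_h^1}$. This is true (one can take $P_hu_h$ and correct each cell average with a tent function supported in $[x_i,x_{i+1}]$), but it is an additional explicit construction of exactly the same nature as the one the paper spends most of its proof on; it is not a consequence of anything you have stated.

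It is worth seeing how the paper avoids needing the full bound $|v_h|_{H_h^{-1}}\leq C$: from $\|Q_hv_h\|_{H^{-1}(\mathbb{R})}\leq C$ it extracts only the weaker estimate $h|v_h|_h\leq C$, and even that requires the explicit piecewise test functions $R_h^Nv_h$ (built with the corrector $\chi$) satisfying $\langle Q_hv_h,R_h^Nv_h\rangle_{L^2(\mathbb{R})}=h\sum_{-N}^N|v_i|^2$ and $\|R_h^Nv_h\|^2_{H^1(\mathbb{R})}\leq h^{-1}\sum_{-N}^N|v_i|^2$. The weaker bound then forces a different decomposition in the convergence step: with only $h|v_h|_h\leq C$, your first contribution $\frac h2(D^+v_h,\varphi_h)_h=-\frac h2(v_h,D^-\varphi_h)_h$ is merely bounded by $\frac C2|D^-\varphi_h|_h$, not $o(1)$. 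That is precisely why the paper pairs $Q_hv_h$ against the symmetric average $\frac12(\varphi+\varphi(\cdot-h))$, a second-order accurate quadrature, so that the remainder involves $\varphi''$ and carries the extra power of $h$, namely $h^2|v_h|_h\|\varphi''\|_{L^2(\mathbb{R})}\leq Ch\|\varphi''\|_{L^2(\mathbb{R})}$. So you must either prove the discrete duality bound $|v_h|_{H_h^{-1}}\leq C\|Q_hv_h\|_{H^{-1}(\mathbb{R})}$ via the cell-average construction sketched above, or fall back on the paper's weaker bound together with its second-order decomposition; as written, the proposal is incomplete at its crucial step.
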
                                                                                          
\begin{proof}
First, we prove that  $P_hv_h\in H^{-1}(\mathbb{R}), \forall h.$ To this end, we first write 
$$P_hv_h=Q_hv_h+(P_h-Q_h)v_h.$$
Then, it suffices to prove that $(P_h-Q_h)v_h\in H^{-1}(\mathbb{R}), \forall h.$ Let $\varphi\in\mathcal{D}(\mathbb{R}),$ and 
$\varphi_h=\{\varphi(x_i)\}_i.$ We have 
\begin{eqnarray*}
|\langle P_hv_h-Q_hv_h,\varphi\rangle_{L^2(\mathbb{R})}|&\leq &|\langle P_hv_h-Q_hv_h,Q_h\varphi\rangle_{L^2(\mathbb{R})}|+
|\langle P_hv_h-Q_hv_h,\varphi-Q_h\varphi\rangle_{L^2(\mathbb{R})}|\\
&\leq& \frac{h}{2}|(D^+v_h,\varphi_h)_h|+|\sum_i\int_{x_i}^{x_{i+1}}(D^+v_h(x_i).\int_{x_i}^{x}\varphi'(s)ds)(x-x_i)dx|\\
&\leq& \frac{h}{2}|(v_h,D^-\varphi_h)_h|+|h^2\sqrt{h}\sum_i|D^+v_h(x_i)|.\int_{x_i}^{x_{i+1}}|\varphi'(x)|^2dx|\\
&\leq& \frac{h}{2}|v_h|_h|D^-\varphi_h|_h+h^2|D^+v_h|_h\|\varphi'\|_{L^2(\mathbb{R})}\\
&\leq& \frac{h}{2}|v_h|_h\|\varphi'\|_{L^2(\mathbb{R})}+2h|v_h|_h\|\varphi'\|_{L^2(\mathbb{R})}\\
&\leq& \frac{5}{2}h|v_h|_h\|\varphi'\|_{L^2(\mathbb{R})},
\end{eqnarray*}
where the sequence $\{h|v_h|_h\}_h$ is bounded. Indeed, the sequence $\{Q_hv_h\}_h$ converges to $v$ in $H^{-1}(\mathbb{R})$ weak
star. Then there exists $C>0$ such that $\|Q_hv_h\|_{H^{-1}(\mathbb{R})}\leq C$ for all  $h,$ hence we have
\begin{equation}
\frac{\langle Q_hv_h,R_h^Nv_h\rangle_{L^2(\mathbb{R})}}{\|R_h^Nv_h\|_{H^1(\mathbb{R})}}\leq C,\quad \forall h,\forall N \in \mathbb{N},
\end{equation}
where $R_h^Nv_h$ is a piecwise function with compact support (hence $R_h^Nv_h\in H^1(\mathbb{R}) $) such that 
\begin{equation}\left\{\begin{array}{lr}
\langle Q_hv_h,R_h^Nv_h\rangle_{L^2(\mathbb{R})}=h\sum_{-N}^N|v_i|^2,&\\
\|R_h^Nv_h\|^2_{H^1(\mathbb{R})} \leq h^{-1}\sum_{-N}^N|v_i|^2.&
\end{array}\right.\end{equation}
For example, we can take $$R^N_hv_h=Q_h\tilde{v}_h+\sum_iD^+\tilde{v}_h\chi(x-x_i),$$
where  $\tilde{v}_h=\{\tilde{v}_h(x_i)\}_i$ with
$\tilde{v}_h(x_i)=\left\{\begin{array}{lr}v_h(x_i),& |i|\leq N\\ 0,& |i|> N, \end{array}\right.$ and $\chi$ is given by

\begin{equation*}\chi(x)=\left\{\begin{array}{lr}
0,\quad x<0 \quad \text{or} \quad x>h&\\
-\frac{3}{2}x,\quad x\in [0,\frac{h}{3}[&\\
\frac{3}{2}(x-\frac{2h}{3}),\quad x\in [\frac{h}{3},\frac{2h}{3}[&\\
3(x-\frac{2h}{3}),\quad x\in [\frac{2h}{3},h[.&\\
\end{array}\right.\end{equation*}
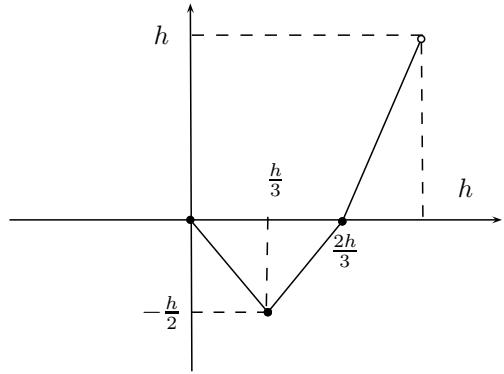
\begin{figure}[h]
 \centering
\scalebox{1} 
{
\begin{pspicture}(0,-2.44)(6.49,2.44)
\psline[linewidth=0.02cm,arrowsize=0.05291667cm 2.0,arrowlength=1.4,arrowinset=0.4]{->}(0.0,-0.43)(6.48,-0.43)
\psline[linewidth=0.02cm,arrowsize=0.05291667cm 2.0,arrowlength=1.4,arrowinset=0.4]{->}(2.4,-2.43)(2.38,2.43)
\psline[linewidth=0.02cm,dotsize=0.07055555cm 2.0]{*-o}(2.38,-0.43)(3.4,-1.65)
\psline[linewidth=0.02cm,dotsize=0.07055555cm 2.0]{*-o}(3.4,-1.65)(4.38,-0.45) 
\psline[linewidth=0.02cm,dotsize=0.07055555cm 2.0]{*-oo}(4.38,-0.45)(5.44,2.01)
\psline[linewidth=0.02cm,linestyle=dashed,dash=0.16cm 0.16cm](3.38,-1.57)(3.4,-0.39)
\psline[linewidth=0.02cm,linestyle=dashed,dash=0.16cm 0.16cm](5.42,1.91)(5.44,-0.39)
\psline[linewidth=0.02cm,linestyle=dashed,dash=0.16cm 0.16cm](5.44,2.01)(2.4,2.01)
\psline[linewidth=0.02cm,linestyle=dashed,dash=0.16cm 0.16cm](3.4,-1.65)(2.4,-1.65)
\usefont{T1}{ptm}{m}{n}
\rput(3.5,0.1){$\frac{h}{3}$}
\usefont{T1}{ptm}{m}{n}
\rput(4.414531,-0.84){$\frac{2h}{3}$}
\usefont{T1}{ptm}{m}{n}
\rput(6,0.0){$h$}
\usefont{T1}{ptm}{m}{n}
\rput(2,2.01){$h$}
\rput(2,-1.65){$-\frac{h}{2}$}
\usefont{T1}{ptm}{m}{n}
\end{pspicture} 
}
\caption{The function $\chi.$}
\end{figure}

Since $$\frac{h\sum_N^N|v_i|^2}{\left[h^{-1}\sum_{-N}^N|v_i|^2\right]^{\frac{1}{2}}}\leq C,\quad \forall h,\forall N \in \mathbb{N},$$
we get $h|v_h|_h\leq C, \forall h.$ Finally, we have $\|(P_h-Q_h)v_h\|_{H^{-1}(\mathbb{R})}\leq C, \forall h,$ then \\$\|P_hv_h\|_{H^{-1}(\mathbb{R})}\leq C, \forall h.$

To show that $\{P_hv_h\}_h$ converges to $v$ in $H^{-1}(\mathbb{R})$ weak star, we need to prove that 
$$P_hv_h \rightharpoonup v,\quad \text{in}\quad \mathcal{D}'(\mathbb{R}).$$
To this end, let $\varphi \in \mathcal{D}(\mathbb{R}).$ We denote $\tau_h\varphi=\frac{1}{2}(\varphi + \varphi(.-h)).$ Then we have
\begin{eqnarray*}
\langle P_hv_h,\varphi\rangle_{L^2(\mathbb{R})}&=& \sum_i\int_{x_i}^{x_{i+1}}(v_h(x_i)+D^+v_h(x_i)(x-x_i))\cdot\varphi(x)dx\\
&=& \sum_i\int_{x_i}^{x_{i+1}}\left(\frac{v_h(x_i)+v_h(x_{i+1})}{2}+D^+v_h(x_i)(x-x_i-\frac{h}{2})\right)\cdot\varphi(x)dx\\
&=& \langle Q_hv_v,\tau_h\varphi\rangle_{L^2(\mathbb{R})}+\sum_i\int_{x_i}^{x_{i+1}}\left(D^+v_h(x_i)(x-x_i-\frac{h}{2})\cdot\int_{x_i}^{x}\varphi(t)dt\right)dx\\
&=& \langle Q_hv_v,\tau_h\varphi\rangle_{L^2(\mathbb{R})}+\int_0^h\int_0^s(s-\frac{h}{2})\left(\sum_iD^+v_h(x_i)\cdot\varphi'(x_i+\rho)\right)d\rho ds \\
&=& \langle Q_hv_v,\tau_h\varphi\rangle_{L^2(\mathbb{R})}
+\frac{1}{h}\int_0^h\int_0^s(s-\frac{h}{2})\left(\sum_iv_h(x_i)\cdot(\varphi'(x_{i-1}+\rho)-\varphi'(x_i+\rho))\right)d\rho ds \\
&=& \langle Q_hv_v,\tau_h\varphi\rangle_{L^2(\mathbb{R})}
+\frac{1}{h}\int_0^h\int_0^s(s-\frac{h}{2})\left(\sum_iv_h(x_i)\cdot\int_{x_i}^{x_{i+1}}\varphi''(x+\rho)dx\right)d\rho ds, \\
\end{eqnarray*}
where
\begin{equation}\left\{\begin{array}{lr}
Q_hv_h\rightarrow v ,\quad\text{in}\quad H^{-1}(\mathbb{R})\quad \text{weak star},&\\
\tau_h\varphi \rightarrow \varphi,\quad \text{in}\quad H^1(\mathbb{R});&\\
\end{array}\right.\end{equation}
hence $\langle Q_hv_v,\tau_h\varphi\rangle_{L^2(\mathbb{R})}\rightarrow \langle v,\varphi\rangle_{L^2(\mathbb{R})}.$ On the other hand, we have
$$\sum_iv_i\cdot\int_{x_i}^{x_{i+1}}\varphi''(x+\rho)dx \leq |v_h|_h\|\varphi''\|_{L^2(\mathbb{R})}.$$ 
It follows that  
\begin{eqnarray*}
\left| \frac{1}{h}\int_0^h\int_0^s(s-\frac{h}{2})\left(\sum_iv_h(x_i)\cdot\int_{x_i}^{x_{i+1}}\varphi''(x+\rho)dx\right)d\rho ds\right|&\leq&
h^2|v_h|_h\|\varphi''\|_{L^2(\mathbb{R})}\\
&\leq& Ch\|\varphi''\|_{L^2(\mathbb{R})},
\end{eqnarray*}
and thus the proof of 1. is completed.  The proof of 2 is similar to that of 1.
\end{proof}

We establish now a compactness result which will be useful in the proofs of main theorems.
\begin{lemma}\label{compa}
 Let $T>0$ and $\{u_h\}_h$ be a sequence of elements in $L^\infty(0,T,H^1_{loc}(\mathbb{R})).$ Assume that $\{u_h\}_h$ is 
 bounded in $L^\infty(0,T,H^1_{loc}(\mathbb{R}))$ such that the sequence $\{\partial_tu_h\}_h$ is bounded in $L^\infty(0,T,H^{-1}(\mathbb{R})).$
 Then we can extract from $\{u_h\}_h$ a  subsequence converging in $\mathcal{C}(0,T,L^2_{loc}(\mathbb{R})).$ 
\end{lemma}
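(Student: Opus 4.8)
The plan is to prove this by a standard Aubin--Lions type compactness argument, realized through the Arzel\`a--Ascoli theorem for functions valued in a Banach space. Since the target space $L^2_{loc}(\R)$ is only local, I would first fix an integer $R\geq 1$, work on the bounded interval $J_R=[-R,R]$, and at the end recover the full statement by a diagonal extraction over $R\in\N$. On $J_R$ the three relevant spaces form a chain in which $H^1(J_R)$ is compactly embedded in $L^2(J_R)$ (Rellich--Kondrachov) and $L^2(J_R)$ is continuously embedded in $H^{-1}(J_R)$. A preliminary observation I would record is that the \emph{global} control of the time derivative dominates the \emph{local} one: for any $v\in L^2(\R)$ the restriction satisfies $\|v\|_{H^{-1}(J_R)}\leq \|v\|_{H^{-1}(\R)}$, because a test function in $H^1_0(J_R)$ extends by zero to $\R$ without changing its $H^1$ norm.

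The heart of the argument is uniform equicontinuity in time. Since $\partial_t u_h\in L^\infty(0,T,H^{-1}(\R))$ with a bound $M'$ independent of $h$, each $u_h$ has a representative that is Lipschitz from $[0,T]$ into $H^{-1}(\R)$, and the fundamental theorem of calculus in $H^{-1}$ combined with the preliminary observation gives
$$\|u_h(t)-u_h(s)\|_{H^{-1}(J_R)}\leq\|u_h(t)-u_h(s)\|_{H^{-1}(\R)}\leq M'\,|t-s|.$$
I would then invoke Ehrling's lemma for the chain above: for every $\varepsilon>0$ there is $C_\varepsilon$ with $\|w\|_{L^2(J_R)}\leq \varepsilon\|w\|_{H^1(J_R)}+C_\varepsilon\|w\|_{H^{-1}(J_R)}$. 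Applying this to $w=u_h(t)-u_h(s)$ and using the uniform-in-$h$ bound $M_R$ of $\{u_h\}$ in $H^1(J_R)$ yields
$$\|u_h(t)-u_h(s)\|_{L^2(J_R)}\leq 2M_R\,\varepsilon+C_\varepsilon M'\,|t-s|,$$
which, after first choosing $\varepsilon$ small and then $|t-s|$ small, gives a modulus of continuity uniform in $h$. In particular each $u_h$ admits a representative in $\mathcal C(0,T,L^2(J_R))$.

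With equicontinuity in hand, I would note that for each fixed $t$ the set $\{u_h(t)\}_h$ is bounded in $H^1(J_R)$, hence relatively compact in $L^2(J_R)$ by Rellich--Kondrachov; this provides pointwise relative compactness. The Arzel\`a--Ascoli theorem for $\mathcal C(0,T,L^2(J_R))$ then furnishes a subsequence converging in that space. Performing this extraction successively for $R=1,2,\dots$ and passing to the diagonal subsequence produces a single subsequence that converges in $\mathcal C(0,T,L^2(J_R))$ for every $R$, that is, in $\mathcal C(0,T,L^2_{loc}(\R))$, as required.

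I expect the main obstacle to be the equicontinuity step, namely correctly combining the interpolation (Ehrling) inequality with the $H^{-1}$-increment bound while matching the \emph{global} hypothesis on $\partial_t u_h$ to the \emph{local} compact-embedding setting on $J_R$; the passage from bounded intervals to $\R$ by diagonalization, and the verification that the limit lies in $\mathcal C(0,T,L^2_{loc})$, are routine once this uniform-in-$h$ time modulus of continuity is secured.
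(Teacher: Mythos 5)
Your proof is correct. It follows the same overall architecture as the paper's --- work on bounded intervals with the triple $H^1\subset\subset L^2\subset H^{-1}$, then pass to $L^2_{loc}$ by diagonal extraction over the intervals --- but it differs in how the key compactness is obtained. The paper treats that step as a black box: it quotes an Aubin--Lions--Simon type proposition from \cite{compacite} (a set bounded in $L^\infty(0,T,X)$ whose time derivatives are bounded in $L^r(0,T,Y)$, $1<r\le\infty$, with $X\subset\subset B\subset Y$, is relatively compact in $\mathcal{C}(0,T,B)$) and applies it with $X=H^1(I_k)$, $B=L^2(I_k)$, $Y=H^{-1}(I_k)$. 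You instead prove the needed compactness by hand: the uniform bound on $\partial_t u_h$ in $L^\infty(0,T,H^{-1}(\mathbb{R}))$ gives a uniform Lipschitz bound for the increments $u_h(t)-u_h(s)$ in $H^{-1}$, Ehrling's lemma converts this, together with the uniform $H^1(J_R)$ bound, into equicontinuity in $L^2(J_R)$, and Rellich--Kondrachov plus the Banach-valued Arzel\`a--Ascoli theorem give convergence in $\mathcal{C}(0,T,L^2(J_R))$. Your route is self-contained and more transparent about where each hypothesis acts (the $L^\infty$-in-time control of $\partial_t u_h$ is precisely what produces the uniform time modulus), and it makes explicit a detail the paper passes over silently: the hypothesis controls the global norm $\|\cdot\|_{H^{-1}(\mathbb{R})}$, while the compactness proposition needs the local norms $\|\cdot\|_{H^{-1}(I_k)}$, and these are dominated by the global one because test functions in $H^1_0(I_k)$ extend by zero. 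What the paper's route buys is brevity.

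Two minor imprecisions, neither a gap. First, since $u_h(t)$ lies only in $H^1_{loc}(\mathbb{R})$ (and in the application it is $S^2$-valued, hence certainly not in $H^{-1}(\mathbb{R})$), the function $u_h$ itself need not take values in $H^{-1}(\mathbb{R})$; only the increments $u_h(t)-u_h(s)=\int_s^t\partial_t u_h(\tau)\,d\tau$ do, and your displayed inequality uses nothing more, so phrase the step in terms of increments rather than claiming a Lipschitz representative valued in $H^{-1}(\mathbb{R})$. Second, the $H^1(J_R)$ bound holds a priori only for almost every $t$; to get the pointwise relative compactness required by Arzel\`a--Ascoli, either apply it on a dense set of good times (which suffices, given equicontinuity) or note that the bound propagates to every $t$ by weak compactness along good times converging to $t$.
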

\begin{proof}
The proof is a consequence of the following proposition 
\begin{proposition}(\cite{compacite})
 Let $X,B$ and $Y$ be three Banach spaces such that $X\subset B\subset Y.$ Assume that the embedding $X\subset B$ is compact. 
 Let $F$ be some bounded subset in $L^\infty(0,T,X)$ such that the subset $G = \{\partial_tf, \quad f\in F\}$ is bounded in $L^r(0,T,Y),$
 with $1<r\leq\infty.$ Then $F$ is relatively compact in $\mathcal{C}(0,T,B).$  
\end{proposition}
We denote by $I_k=]-k,k[$ with $k\in \mathbb{N}.$ We consider the three spaces $X=H^1(I_k),$ $B=L^2(I_k)$ and
$Y = H^{-1}(I_k).$  The embedding $H^1(I_k) \subset L^2(I_k)$ is compact, hence using previous proposition, we can extract from 
$\{u_h\}_h$ a subsequence (depending on $k$) which converges in $\mathcal{C}(0,T,L^2(I_k)).$ Thus the diagonal subsequence of Cantor converges 
in $C(0,T,L^2(I_k))$ for all $k \in \mathbb{N}$.
\end{proof}


\subsection{Proof of Theorem \ref{th_ex_np}}
We construct a weak solution for the system  
\begin{equation}\label{LIAg}
\left\{\begin{array}{lr}
\partial_tu=\partial_x\left(u\wedge g(x)\partial_x u\right)=
u\wedge\partial_x\left(g\partial_x u\right),& t\geq 0,\quad x\in \mathbb{R},\\
u(0,x)=u_0(x),&
\end{array}\right.
\end{equation}
as a limit, when $h\rightarrow 0$, of a sequence  $\{u_h\}_h$ of solutions to the semi-discrete system 
\begin{equation}\label{LIAgD}
\left\{\begin{array}{lr}
\frac{du_h}{dt}=D^+\left(u_h\wedge g_hD^-u_h\right)=
u_h\wedge D^+\left(g_hD^-u_h\right),& t\geq 0,\\
u_h(0)=u_h^0,&
\end{array}\right.
\end{equation}
where $u_h^0=\{u_h^0(x_i)\}_i\in (\mathbb{R}^3)^{\mathbb{Z}_h}$ with $|u_h^0(x_i)|=1$ and $g_h=\{ g(t,x_i)\}_i.$

\begin{proposition}\label{ex_sol_Dg}
Let $u_h^0=\{u_h^0(x_i)\}_i\in (\mathbb{R}^3)^{\mathbb{Z}_h}$ be such that $|u_h^0(x_i)|=1,$ and $D^+u_h^0\in L_h^2.$ Let \\ 
$g\in W^{1,\infty}(\R^+,L^\infty(\mathbb{R}))$ be such that $g\geq\alpha$ for some $\alpha>0$. Then equation  (\ref{LIAgD}) has a global solution  
$u_h=\{u_h(x_i)\}_i\in \mathcal{C}^1(\mathbb{R}^+,(\mathbb{R}^3)^{\mathbb{Z}_h})$ with $|u_h(t,x_i)|=1$ and $D^+u_h\in \mathcal{C}^1(\mathbb{R}^+,L_h^2).$
\end{proposition}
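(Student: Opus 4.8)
The plan is to recast (\ref{LIAgD}) as an ordinary differential equation in a Hilbert space, invoke the Cauchy--Lipschitz (Picard--Lindel\"{o}f) theorem for local existence and uniqueness, and then use the pointwise conservation of $|u_h(x_i)|$ to rule out finite-time blow-up.

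The first thing to get right is the choice of phase space, and this is the main subtlety. Since $|u_h^0(x_i)|=1$ for every $i$, the sequence $u_h$ itself does \emph{not} belong to $L_h^2$ (its squared norm is an infinite sum of ones). I would therefore work in the affine space $u_h^0+L_h^2$: set $z:=u_h-u_h^0$ and look for $z\in L_h^2$. Writing $B(t)u:=D^+(g_hD^-u)$, equation (\ref{LIAgD}) becomes $\dot z=F(t,u_h^0+z)$ with $F(t,u)=u\wedge B(t)u$. Because $h$ is fixed, Lemma \ref{Pd} shows $D^\pm$ are bounded on $L_h^2$ with norm at most $2/h$, and $D^-u_h^0\in L_h^2$ by hypothesis; together with $\|g\|_{L^\infty}<\infty$ this makes $B(t)$ a bounded linear map sending the affine space into $L_h^2$, so $F(t,u_h^0+z)\in L_h^2$. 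Using bilinearity of $\wedge$, the embedding $L_h^2\hookrightarrow L_h^\infty$ (with constant $h^{-1/2}$), and the splitting $F(t,u)-F(t,\tilde u)=u\wedge B(t)(u-\tilde u)+(u-\tilde u)\wedge B(t)\tilde u$, I would check that $z\mapsto F(t,u_h^0+z)$ is Lipschitz on bounded sets of $L_h^2$, with constant uniform in $t$. Continuity in $t$ follows from $g\in W^{1,\infty}(\R^+,L^\infty)$ (take the continuous-in-$t$ representative, and estimate $|B(t)u-B(s)u|_h\leq \tfrac{2}{h}\|g(t)-g(s)\|_{L^\infty}|D^-u|_h$). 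Cauchy--Lipschitz then yields a unique maximal solution $z\in\mathcal{C}^1([0,T_{\max}),L_h^2)$.

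Next I would prove the pointwise conservation of the norm. For each fixed $i$ the $i$-th component of the system reads $\dot u_h(x_i)=u_h(x_i)\wedge[\,\cdots]$, so $\tfrac{d}{dt}|u_h(x_i)|^2=2\,u_h(x_i)\cdot\bigl(u_h(x_i)\wedge[\,\cdots]\bigr)=0$; hence $|u_h(t,x_i)|=|u_h^0(x_i)|=1$ throughout $[0,T_{\max})$, giving the uniform bound $|u_h(t)|_{L_h^\infty}=1$. Global existence is where this conservation pays off, and I expect it to be the crux: a priori $F$ is \emph{quadratic}, so the naive estimate on $|z|_h$ is of Riccati type and yields only local existence. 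However, once $|u_h|_{L_h^\infty}=1$ is known, $|F(t,u_h)|_h=|u_h\wedge B(t)u_h|_h\leq |B(t)u_h|_h\leq \tfrac{2}{h}\|g\|_{L^\infty}|D^+u_h|_h\leq \tfrac{2}{h}\|g\|_{L^\infty}\bigl(|D^+u_h^0|_h+\tfrac{2}{h}|z|_h\bigr)$, which is only \emph{linear} in $|z|_h$. Gr\"{o}nwall's lemma then bounds $|z(t)|_h$ on every finite interval, so the blow-up alternative forces $T_{\max}=+\infty$.

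Finally, the regularity claim is immediate from the framework: $D^+u_h=D^+u_h^0+D^+z$ with $D^+u_h^0\in L_h^2$ constant in $t$ and $D^+z\in\mathcal{C}^1(\R^+,L_h^2)$ (as $D^+$ is bounded linear and $z\in\mathcal{C}^1$), whence $D^+u_h\in\mathcal{C}^1(\R^+,L_h^2)$; and $u_h=u_h^0+z\in\mathcal{C}^1(\R^+,(\R^3)^{\Z_h})$ componentwise. I would also record the conserved energy $\tfrac{h}{2}\sum_i g(x_i)|D^-u_h(x_i)|^2$, obtained from the discrete integration by parts (\ref{IPPd}) and the identity $a\cdot(u\wedge a)=0$ applied to $a=D^+(g_hD^-u_h)$; it is not needed for this proposition but underlies the uniform-in-$h$ estimates used later.
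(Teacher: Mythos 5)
Your proof is correct, but it takes a genuinely different route from the paper's on two points. First, the phase space: the paper works directly in the Banach space $E_h=\{v_h:\ v_h\in L_h^\infty,\ D^+v_h\in L_h^2\}$ equipped with the norm $|v_h|_{L_h^\infty}+|D^+v_h|_h$, which contains every sequence of unit vectors, whereas you linearize around the initial datum and solve for $z=u_h-u_h^0$ in the affine space $u_h^0+L_h^2$; both choices make Cauchy--Lipschitz applicable, and the pointwise conservation $|u_h(t,x_i)|=1$ is obtained identically in both. Second, and more substantially, the globalization mechanism differs. The paper takes the $L_h^2$-scalar product of the equation with $D^+(g_hD^-u_h)$ and uses discrete integration by parts to get $\frac{d}{dt}\sum_i g_h|D^-u_h|^2=\sum_i\partial_t g\,|D^-u_h|^2$, whence by Gr\"{o}nwall
$$|D^+u_h(t)|_h\leq \sqrt{\tfrac{\beta}{\alpha}}\,|D^+u_h^0|_h\exp\left(\tfrac{\beta_1 t}{2\alpha}\right);$$
this uses $g\geq\alpha$, but the resulting bound is \emph{uniform in $h$}, and it is precisely this estimate that feeds (\ref{dert_borne}) and the compactness argument later in the paper. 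You instead exploit $|u_h|_{L_h^\infty}=1$ to turn the a priori quadratic vector field into one with linear growth in $|z|_h$ and apply Gr\"{o}nwall to $|z|_h$: this is perfectly sound for the proposition as stated (where $h$ is fixed), and it even avoids the hypothesis $g\geq\alpha$ altogether, but it produces constants of order $e^{Ct/h^2}$ that degenerate as $h\to 0$, so the paper's energy estimate still has to be established separately before passing to the limit --- as you yourself acknowledge at the end. One small correction to that closing remark: the quantity $\tfrac{h}{2}\sum_i g|D^-u_h|^2$ is conserved only when $g$ is independent of $t$; for $g=g(t,x)$ it merely satisfies the differential identity above and is controlled, not conserved.
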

\begin{proof}
Let $h>0.$ We provide the space  
$$E_h=\{v_h \in (\mathbb{R}^3)^{\mathbb{Z}_h},\quad v_h\in L_h^\infty\quad \text{and}\quad D^+v_h\in L_h^2\},$$ 
with the norm 
$$\|v_h\|_h=|v_h|_{L_h^\infty}+|D^+v_h|_h,\quad \forall v_h\in E_h,$$
for which the space $(E_h,\|.\|_h)$ is a Banach space. Let $R>0$ and $\Omega = B_{E_h}(u^0_h,R).$
We define the function
\begin{equation*}\left\{\begin{array}{lr}
F:\Omega \rightarrow E_h:\quad v_h\mapsto F(v_h),&\\
(F(v_h))(x_i)=D^+(v_h\wedge(g_hD^-v_h))(x_i)=\frac{1}{h^2}\left(g(x_i)v_h(x_i)\wedge v_h(x_{i-1})-g_h({i+1})v_h(x_{i+1})\wedge v_h(x_i)\right).&
\end{array}\right.\end{equation*}
In what follows we denote $\beta = \|g\|_{L^\infty(\mathbb{R})}.$ Let $u_h, v_h\in \Omega.$ We have on the one hand 
\begin{eqnarray*}
F(u_h)(x_i)-F(v_h)(x_i)&=&\frac{g_h(x_i)}{h^2}\left[u_h(x_i)\wedge(u_h(x_{i-1})-v_h(x_{i-1}))+(u_h(x_i)-v_h(x_i))\wedge v_h(x_i)\right]\\
&&+\frac{g_h(x_{i+1})}{h^2}\left[(v_h(x_{i+1})-u_h(x_{i+1}))\wedge v_h(x_i)+u_h(x_{i+1})(v_h(x_i)-u_h(x_i))\right],
\end{eqnarray*}
then 
\begin{equation}\label{anx_nrm_1}
|F(v_h)-F(u_h)|_{L_h^\infty}\leq\frac{4\beta}{h^2}(R+\|u^0_h\|_h)|v_h-u_h|_{L_h^\infty}.
\end{equation}
On the other hand, using Lemma \ref{Pd} we get 
\begin{eqnarray*}
|D^+(F(v_h)-F(u_h))|_h&=&|D^+[D^+(g_h(v_h\wedge D^-v_h-u_h\wedge D^-u_h))]|_h\\
&\leq& \frac{4\beta}{h^2}|v_h\wedge D^-v_h-u_h\wedge D^-u_h|_h\\
&\leq& \frac{4\beta}{h^2}(|v_h|_{L_h^\infty}|D^-(v_h-u_h)|_h+|D^-u_h|_h|D^-(v_h-u_h)|_h)\\
&\leq& \frac{4\beta}{h^2}(R+\|u^0_h\|_h)(|D^-(v_h-u_h)|_h+|D^-(v_h-u_h)|_h).\\
\end{eqnarray*}
It follows that 
\begin{equation}\label{anx_nrm_2}
|F(v_h)-F(u_h)|_h\leq\frac{4\beta}{h^2}(R+\|u^0_h\|)\|v-u\|_h.
\end{equation}
Then, combining (\ref{anx_nrm_1}) and (\ref{anx_nrm_2}), we deduce  that
$$\|F(v_h)-F(u_h)\|_h\leq\frac{8\beta}{h^2}(R+\|u^0_h\|_h)\|v_h-u_h\|_h.$$
Thus $F$ is locally Lipschitz-continuous and  Cauchy-Lipschitz theorem holds. Hence there 
exists $T^\ast\in\mathbb{R}_\ast^+\cup \{+\infty\}$ and $u_h:[0,T^\ast[\rightarrow(E_h,\|\|_h)$  satisfying (\ref{LIAgD}). Taking
the usual $\mathbb{R}^3-$scalar product in (\ref{LIAgD}) with $u_h,$ we find that $\frac{d}{dt}|u_h(t,x_i)|=0,$ hence $|u_h(t,x_i)|=|u^0_h(x_i)|=1$
on $[0,T^\ast[.$ Then we have $\|u_h\|_h=1+|D^+u_h|_h$ which gives $T^\ast$ the following characterisation 
$$\limsup_{t\rightarrow T^\ast}|D^+u_h(t)|_h =+\infty \quad \text{if}\quad T^\ast < +\infty.$$
Taking the $L_h^2-$scalar product in (\ref{LIAgD}) with $D^+\left(g_hD^-u_h\right),$ we get 
$$\frac{d}{dt}\sum_ig_h|D^-u_h(x_i)|^2(t,x_i)= \sum_i\partial_tg(t,x_i)|D^-u_h(x_i)|^2(t,x_i),$$ 
from which and by using the Gr\"{o}nwall lemma, we obtain
$$|D^+u_h(t)|_h=|D^-u_h(t)|_h\leq \sqrt{\frac{\beta}{\alpha}}|D^+u_h^0|_h\exp\left(\frac{\beta_1t}{2\alpha}\right)\quad \forall t\in [0,T^\ast[.$$
This means that $\lim_{t\rightarrow T^\ast}\|u_h\|_h\neq +\infty,$ hence we finally get $T^\ast=+\infty.$
\end{proof}

In what follows, we consider $T>0$ fixed.  For each sequence $\{v_h\}_h$ of elements in $L_h^2$, we have 
$\left(\frac{du_h}{dt},v_h\right)_h=-\left(u_h\wedge g_hD^-u_h,D^-v_h\right)_h,$ hence 
\begin{equation}\label{dert_borne}
\left|\frac{du_h}{dt}\right|_{H_h^{-1}}\leq \beta\sqrt{\frac{\beta}{\alpha}}|D^+u_h^0|\exp\left(\frac{\beta_1t}{2\alpha}\right).
\end{equation}
Let $\{u_h^0\}_h$ be a sequence satisfying
\begin{equation}\label{cond_di}\left\{\begin{array}{lr}
Q_hu_h^0\rightarrow u_0\quad\text{in}\quad L_{loc}^2(\mathbb{R}),&\\
Q_hD^+u_h^0 \rightarrow \frac{du_0}{dx}\quad\text{in}\quad L^2(\mathbb{R}).&\\
\end{array}\right.\end{equation}
Then we have
\begin{lemma}\label{lem_born1}
The sequence of solutions $\{u_h\}_h$ satisfying (\ref{LIAgD}), with initial data $\{u_h^0\}_h$ satisfying (\ref{cond_di}), has the properties\\
i) $\{\partial_tP_hu_h\}_h$ is bounded in $L^\infty(0,T,H^{-1}(\mathbb{R})).$\\
ii) $\{P_hu_h\}_h$ is bounded in $L^\infty(0,T,H_{loc}^1(\mathbb{R})).$
\end{lemma}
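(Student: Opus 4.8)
The plan is to read off both bounds from the a priori estimates of Proposition~\ref{ex_sol_Dg} and from inequality (\ref{dert_borne}), transferring the discrete estimates to the piecewise-affine interpolants by means of the interpolation lemmas proved above. The starting observation is that the discrete initial energy is uniformly bounded: by the second line of (\ref{cond_di}), $|D^+u_h^0|_h = \|Q_hD^+u_h^0\|_{L^2(\mathbb{R})}$ converges to $\|\frac{du_0}{dx}\|_{L^2(\mathbb{R})}$, so there is a constant $M>0$, independent of $h$, with $|D^+u_h^0|_h \leq M$.

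For (i), I use that $P_h$ is a linear spatial interpolation, so it commutes with $\partial_t$ and $\partial_t P_hu_h = P_h\frac{du_h}{dt}$. From (\ref{dert_borne}) and the bound $|D^+u_h^0|_h\leq M$ we get, for every $t\in[0,T]$,
$$\left|\frac{du_h}{dt}(t)\right|_{H_h^{-1}} \leq \beta\sqrt{\frac{\beta}{\alpha}}\,M\,\exp\left(\frac{\beta_1 T}{2\alpha}\right),$$
a bound uniform in $h$ and in $t$. Applying Lemma~\ref{lem_born}, whose proof yields $\|P_hv_h\|_{H^{-1}(\mathbb{R})}\leq C|v_h|_{H_h^{-1}}$ with $C$ independent of $h$, to $v_h=\frac{du_h}{dt}(t)$ gives a bound on $\|\partial_tP_hu_h(t)\|_{H^{-1}(\mathbb{R})}$ uniform in $h$ and $t$, which is exactly boundedness in $L^\infty(0,T,H^{-1}(\mathbb{R}))$.

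For (ii), I control the $H^1$ norm on each bounded interval $I_k=]-k,k[$ separately. The derivative is controlled globally: since $\frac{d}{dx}P_hu_h=Q_hD^+u_h$, one has $\|\frac{d}{dx}P_hu_h(t)\|_{L^2(\mathbb{R})}=|D^+u_h(t)|_h$, and the energy estimate of Proposition~\ref{ex_sol_Dg} gives $|D^+u_h(t)|_h\leq \sqrt{\frac{\beta}{\alpha}}\,M\,\exp\bigl(\frac{\beta_1 T}{2\alpha}\bigr)$ for all $t\in[0,T]$, uniformly in $h$. For the $L^2$ part I exploit the pointwise constraint $|u_h(t,x_i)|=1$ from Proposition~\ref{ex_sol_Dg}: on each cell $C_i$ the value $P_hu_h(t,x)$ is a convex combination of the two unit vectors $u_h(t,x_i)$ and $u_h(t,x_{i+1})$, so $\|P_hu_h\|_{L^\infty(\mathbb{R})}\leq1$ and hence $\|P_hu_h(t)\|_{L^2(I_k)}\leq\sqrt{2k}$ uniformly in $h$ and $t$. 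Combining the two estimates bounds $\sup_t\|P_hu_h(t)\|_{H^1(I_k)}$ uniformly in $h$ for each fixed $k$, which is the required boundedness in $L^\infty(0,T,H^1_{loc}(\mathbb{R}))$.

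The only genuine difficulty lies in (i): the passage from the discrete dual norm $|\cdot|_{H_h^{-1}}$ to the continuous dual norm $\|\cdot\|_{H^{-1}(\mathbb{R})}$ for the interpolant, which rests entirely on Lemma~\ref{lem_born} and on the $h$-independence of its constant. Part (ii) is essentially immediate once the pointwise constraint $|u_h|=1$ and the energy estimate are in hand. The remaining care is purely bookkeeping: checking that all the constants $\alpha,\beta,\beta_1,M$ and the factor $\exp(\beta_1T/(2\alpha))$ are independent of $h$, which is precisely what makes the $L^\infty$-in-time bounds uniform.
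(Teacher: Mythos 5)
Your proposal is correct and follows essentially the same route as the paper: part (i) is obtained exactly as in the paper by combining the uniform discrete dual-norm bound (\ref{dert_borne}) (made uniform via the convergence in (\ref{cond_di})) with the interpolation Lemma \ref{lem_born}, and part (ii) splits the local $H^1$ norm into an $L^2$ part controlled by the pointwise constraint $|u_h(t,x_i)|=1$ and a gradient part controlled by the discrete energy estimate of Proposition \ref{ex_sol_Dg}. Your write-up is in fact slightly more careful than the paper's on two points: you retain the Gr\"onwall factor $\exp(\beta_1 T/(2\alpha))$ in the bound on $|D^+u_h(t)|_h$ (the paper's final line writes only $|D^+u_h^0|_h^2$), and you justify the $L^2$ bound via the convex-combination observation $\|P_hu_h\|_{L^\infty(\mathbb{R})}\leq 1$.
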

\begin{proof}
Property i) is an immediate result of  (\ref{dert_borne}) and Lemma \ref{lem_born}.\\
ii) Let $I=[a,b]\subset\mathbb{R}.$ Then we have 
\begin{eqnarray*}
\|P_hu_h\|^2_{H^1(I)}&=&\sum_i\int_{x_i}^{x_{i+1}}\left|\frac{x_i-x}{h}u_h(x_i)+\frac{x-x_i}{h}u_h(x_{i+1})\right|^2dx+
\sum_ih\left|\frac{u_h(x_i)-u_h(x_{i+1})}{h}\right|^2dx\\
&\leq&\sum_i\frac{h}{3}\left(|u_h(x_i)|^2+|u_h(x_{i+1})|^2+u_h(x_i)u_h(x_{i+1})\right)+|D^+u_h|^2_h\\
&\leq& b-a+2h+|D^+u_h^0|^2_h,
\end{eqnarray*}
where the sequence $\{|D^+u_h^0|_h\}_h$ is bounded, since $Q_hD^+u_h^0 \rightarrow \frac{du_0}{dx}\quad\text{in}\quad L^2(\mathbb{R}).$
\end{proof}

 Since $\{P_hu_h\}_h$ and $\{\partial_tP_hu_h\}_h$ are bounded in  
$L^\infty(0,T,H^1_{loc}(\mathbb{R}))$ and $L^\infty(0,T,H^{-1}(\mathbb{R}))$ respectively and in view of Lemma \ref{compa}, there exists a subsequence 
 $\{u_h\}_h$ and $u$ such that $\{P_hu_h\}_h$ converges to $u$ in $L^2(0,T,L^2_{loc}(\mathbb{R}))$ and almost everywhere. Moreover,
$\{\partial_tP_hu_h\}_h$ converges to $\partial_tu$ in  $L^\infty(0,T,H^{-1}(\mathbb{R}))$ weak star. 
The sequence $\{Q_hu_h\}$ converges also to $u$ almost everywhere. To show that the second member
$\{P_hD^+\left(u_h\wedge g_hD^-u_h\right)\}_h$ converges to $\partial_x\left(u\wedge g(x)\partial_xu\right)$, we note first that by Lemma
\ref{lem_cv_l2f}, the two sequences  $\{P_h(u_h\wedge g_hD^-u_h)\}_h$ and $\{Q_h(u_h\wedge g_hD^-u_h)\}_h$ converge  to the same limit in $L^\infty(0,T,L^2(\mathbb{R}))$
weak star. Since $$Q_h(u_h\wedge g_hD^-u_h)=Q_hu_h\wedge (Q_hg_hQ_hD^-u_h),$$ and
\begin{equation}\left\{\begin{array}{lr}
Q_hg_h\rightarrow g \quad \text{almost everywhere},&\\
Q_hu_h\rightarrow u\quad \text{almost everywhere},&\\
Q_hD^-u_h \rightarrow \partial_x u\quad\text{in}\quad L^\infty(0,T,L^2(\mathbb{R}))\quad \text{weak star},&\\
\end{array}\right.\end{equation}
we have 
$$Q_h(u_h\wedge g_hD^-u_h)\rightarrow u\wedge(g\partial_x u) \quad\text{in}\quad L^\infty(0,T,L^2(\mathbb{R}))\quad \text{weak star},$$
and
\begin{equation}\left\{\begin{array}{lr}
P_h(u_h\wedge g_hD^-u_h)\rightarrow u\wedge(g\partial_xu) \quad\text{in}\quad L^\infty(0,T,L^2(\mathbb{R}))\quad \text{weak star},&\\
\partial_xP_h(u_h\wedge g_hD^-u_h)\rightarrow \partial_x\left(u\wedge(g\partial_xu)\right) \quad\text{in}\quad L^\infty(0,T,H^{-1}(\mathbb{R}))\quad \text{weak star}.&\\
\end{array}\right.\end{equation}
It is clear that $$Q_hD^+\left(u_h\wedge g_hD^-u_h\right)=\partial_xP_h(u_h\wedge g_hD^-u_h),$$ 
then using lemma \ref{lem_cv_h-1}, the sequence $\{P_hD^+\left(u_h\wedge g_hD^-u_h\right)\}_h$ converges to
$\partial_x\left(u\wedge(g\partial_xu)\right)$ in $L^\infty(0,T,H^{-1}(\mathbb{R}))$ weak star. 

When $g=g(x)$ does not depend on time, we have $$\frac{d}{dt}\int_{\R}g(x)|\partial_xu(t,x)|^2dx =0,$$
then $$\|\partial_xu(t)\|^2_{L^2(\R)}\leq \frac{\|g\|_{L^\infty(\R)}}{\alpha}\left\|\frac{du_0}{dx}\right\|_{L^2(\R)},$$
and $u\in L^\infty(\R^+, H^1_{loc}(\mathbb{R})).$ Thus the proof of Theorem \ref{th_ex_np} is completed.

\subsection{Proof of Theorem \ref{th_ex_p}}
In this proof we use, without details, the same techniques of previous proof.
Let $l>0$ and $T>0$. We construct a solution $u\in L^\infty(0,T,H^1(\mathbb{T}^l,S^2))$ for the system
\begin{equation}\label{LIAgp}
\left\{\begin{array}{lr}
\partial_tu=\partial_x\left(u\wedge g\partial_xu\right)=
u\wedge \partial_x\left(g\partial_xu\right),& t\geq 0,\quad x\in \mathbb{T}^l,\\
u(0,x)=u_0(x).&
\end{array}\right.
\end{equation}
as a limit, when $h\rightarrow 0$, of a sequence $\{u_h =\{u_h(x_i)\}_i\in P_{l,N}\}_h$ (with $h=\frac{l}{N}$) 
of solutions to the semi-discrete system
\begin{equation}\label{LIAgDp}
\left\{\begin{array}{lr}
\frac{du_h}{dt}=D^+\left(u_h\wedge g_hD^-u_h\right)=
u_h\wedge D^+\left(g_hD^-u_h\right),& t> 0,\\
u_h(0)=u_h^0,&\\
u_h(t,x_0)=u_h(t,x_N),\quad t\geq0,&
\end{array}\right.
\end{equation}
with $|u_h(x_i)^0|=1,$ and $g_h=\{g(t,x_i)\}_i$ such that $g(t,x_0)=g(t,x_N).$

\begin{proposition}\label{ex_sol_Dgp}
Let $u_h^0\in P_{l,N}$ (with $h=\frac{l}{N}$) be such that $|u_h^0(x_i)|=1,$ and $g\in W^{1,\infty}(\R^+,L^\infty(\mathbb{T}^l))$ be 
such that  $g\geq\alpha$ for some $\alpha >0$. Then there exists a solution $u_h=\{u_h(x_i)\}_i\in \mathcal{C}^1(\mathbb{R}^+, P_{l,N})$  
for (\ref{LIAgDp}) with $|u_h(t,x_i)|=1$ for every $i.$
\end{proposition}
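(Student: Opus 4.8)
The plan is to imitate the proof of Proposition \ref{ex_sol_Dg}, while taking advantage of a simplification specific to the periodic setting: the space $P_{l,N}$ is finite-dimensional. Indeed, an $N$-periodic sequence is completely determined by the $N$ vectors $(v_h(x_1),\dots,v_h(x_N))\in(\mathbb{R}^3)^N$, so that $P_{l,N}\cong\mathbb{R}^{3N}$. By part~2 of Lemma~\ref{Pd} the operators $D^\pm$ map $P_{l,N}$ into itself, and the compatibility condition $g(t,x_0)=g(t,x_N)$ ensures that $g_h(t)=\{g(t,x_i)\}_i$ and the product $u_h\wedge(g_hD^-u_h)$ stay $N$-periodic. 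Hence the right-hand side
$$F(t,v_h)=v_h\wedge D^+\bigl(g_h(t)D^-v_h\bigr)$$
is a well-defined map $\mathbb{R}^+\times P_{l,N}\to P_{l,N}$, and (\ref{LIAgDp}) is a genuine ODE on $\mathbb{R}^{3N}$.

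First I would establish local existence through the Cauchy--Lipschitz theorem. For each fixed $t$, the map $v_h\mapsto F(t,v_h)$ is quadratic in the components of $v_h$ (a finite sum of cross products), hence smooth and locally Lipschitz on the finite-dimensional space $P_{l,N}$; since $g\in W^{1,\infty}(\mathbb{R}^+,L^\infty(\mathbb{T}^l))$, the coefficient $t\mapsto g_h(t)$ is Lipschitz, so $F$ is continuous in $t$ with a Lipschitz constant in $v_h$ that is locally bounded in time. This yields a unique maximal solution $u_h\in\mathcal{C}^1([0,T^\ast[,P_{l,N})$ of (\ref{LIAgDp}) with $u_h(0)=u_h^0$, for some $T^\ast\in\mathbb{R}_\ast^+\cup\{+\infty\}$.

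Next I would show the pointwise conservation $|u_h(t,x_i)|=1$, exactly as in Proposition~\ref{ex_sol_Dg}: taking the $\mathbb{R}^3$ scalar product of the $i$-th line of (\ref{LIAgDp}) with $u_h(t,x_i)$ annihilates the cross-product term, so that
$$\tfrac{1}{2}\tfrac{d}{dt}|u_h(t,x_i)|^2=u_h(t,x_i)\cdot\tfrac{du_h}{dt}(t,x_i)=0,$$
and therefore $|u_h(t,x_i)|=|u_h^0(x_i)|=1$ for all $i$ and all $t\in[0,T^\ast[$.

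The last step, and the place where the periodic case is genuinely easier than Proposition~\ref{ex_sol_Dg}, is the passage to global existence. On the whole line the solution lives in an infinite-dimensional space and one must rule out blow-up of $|D^+u_h|_h$ by a Gr\"onwall estimate; here, by contrast, the conservation just proved confines the solution to the compact set $(S^2)^N\subset P_{l,N}\cong\mathbb{R}^{3N}$, so $u_h(t)$ stays bounded and the standard continuation criterion for finite-dimensional ODEs forces $T^\ast=+\infty$. I do not expect a serious obstacle: the only point requiring care is to track the periodicity through $D^\pm$ and $g_h$ so that the evolution really reduces to a finite-dimensional ODE, after which Cauchy--Lipschitz and the sphere constraint do all the work.
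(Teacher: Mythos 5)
Your proposal is correct and takes essentially the same route as the paper: the paper likewise equips $P_{l,N}$ with the norm $|\cdot|_{L_h^\infty}$, gets local existence from the Cauchy--Lipschitz theorem applied to the quadratic map $F$, derives $|u_h(t,x_i)|=1$ by taking the scalar product with $u_h$, and concludes global existence because the conserved norm prevents blow-up. Your appeal to the compactness of $(S^2)^N$ in the finite-dimensional space $P_{l,N}\cong\mathbb{R}^{3N}$ is just an explicit rephrasing of the simplification the paper encodes in its choice of the $L_h^\infty$ norm, under which the non-periodic proof's Gr\"onwall step becomes unnecessary.
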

\begin{proof}
Let $l>0$ and $N\in \mathbb{N}.$ We denote $h=\frac{l}{N}.$ We provide the space $P_{l,N}$ with the norm
$$|v_h|_{L_h^\infty}=\sup_{i\in\mathbb{Z}}|v_h(x_i)|,\quad \forall v_h\in P_{l,N},$$
which makes $(P_{l,N},|.|_{L_h^\infty})$ a Banach space. Let $R>0$ and $\Omega = B_{P_{l,N}}(u^0_h,R).$
We define the function $F:\Omega \rightarrow P_{l,N}$ by 
\begin{eqnarray*}
(F(v_h))(x_i)&=&D^+(v_h\wedge(g_hD^-v_h))(x_i)\\
&=& \frac{1}{h^2}\left(g_h(x_i)v_h(x_i)\wedge v_h(x_{i-1})-g_h(x_{i+1})v_h(x_{i+1})\wedge v_h(x_i)\right).
\end{eqnarray*}
Then we follow the same steps followed to demonstrate Proposition \ref{ex_sol_Dg}.
\end{proof}

The rest of proof is similar to that of Theorem \ref{th_ex_np} and requires  
property (\ref{IPPdp}) and results of Lemmas \ref{lem_born}, \ref{lem_cv_l2f} and \ref{lem_cv_h-1}.

\subsection{Proof of Theorem \ref{th_ex_npf}}
We denote
$$\Delta_{g_h}v_h = D^+(g_hD^-v_h)=D^-(\tau^+g_hD^+v_h),\quad D^2= D^+D^-=D^+D^-, D^3=D^+D^-D^+,$$
and $g_h^t= \{\partial_tg(t,x_i)\}_i$. Since $g$ is given in $ W^{1,\infty}(\R^+,W^{3,\infty}(\mathbb{R},\mathbb{R})),$ 
then there exist $\beta,\beta_1, \beta',\beta'_1, \beta'',\beta''_2$ and $\beta'''$ such that
$$\left\{\begin{array}{lr}
|g_h|_{L^\infty_h}\leq \beta,\quad |g^t_h|_{L^\infty_h}\leq \beta_1&\\
|D^+g_h|_{L^\infty_h} =|D^-g_h|_{L^\infty_h}\leq \beta',\quad |D^+g^t_h|_{L^\infty_h} =|D^-g^t_h|_{L^\infty_h}\leq \beta'_1&\\
|D^2g_h|_{L^\infty_h}\leq \beta'',\quad |D^2g^t_h|_{L^\infty_h}\leq \beta''_2&\\
|D^3g_h|_{L^\infty_h}\leq \beta'''.&
\end{array}\right.
$$
Our proof include  several steps
\subsubsection{Step 1}
In this step, we establish two a priori estimates in $\frac{du_h}{dt},$ $D^-\frac{du_h}{dt},$ $\Delta_{g_h}u_h$ and $D^-\Delta_{g_h}u_h.$\\
We start by proving that
\begin{equation}
\frac{d}{dt}\left(\left|\frac{du_h}{dt}\right|_h^2+|\Delta_{g_h}u_h|_h^2 \right)\leq C_1\left(\left|\frac{du_h}{dt}\right|_h^2+|\Delta_{g_h}u_h|_h^2 \right)^2 +C_2,
\end{equation}
where $C_1$ and $C_2$ are two positive constants independent of $h.$
For any two sequences $u_h=\{u_h(x_i)\}_i$ and $v_h =\{u_h(x_i)\}_i,$ we have
\begin{eqnarray}\label{delta_gh}
\Delta_{g_h}(u_hv_h)&=& D^+(g_h\tau^-v_hD^-uh+g_hu_hD^-v_h)\notag\\
&=& \tau^+\tau^-v_h\Delta_{g_h}u_h+g_hD^+(\tau^-v_h)D^-u_h+\tau^+(g_hD^-v_h)D^+u_h+u_h\Delta_{g_h}v_h\notag\\
&=&v_h\Delta_{g_h}uh+g_hD^-v_hD^-u_h+\tau^+g_hD^+v_hD^+u_h+u_h\Delta_{g_h}v_h.
\end{eqnarray}
We derive (\ref{LIAgD}) with respect to $t$
\begin{equation}\label{derivu_htt}
\frac{d^2u_h}{dt^2}=(u_h\wedge\Delta_{g_h}u_h)\wedge\Delta_{g_h}u_h+u_h\wedge\Delta_{g_h}(u_h\wedge\Delta_{g_h}u_h)+u_h\wedge\Delta_{g^t_h}u_h.
\end{equation}
Using (\ref{delta_gh}) and $|u_h(t,x_i)|=1$ together with  equation (\ref{derivu_htt}), we get 
\begin{eqnarray}\label{derivu_htt1}
\frac{d^2u_h}{dt^2}&=& (u_h\cdot\Delta_{g_h}u_h )\Delta_{g_h}u_h-|\Delta_{g_h}u_h|^2u_h\notag \\
&&+u_h\wedge(g_hD^-u_h\wedge D^-\Delta_{g_h}u_h+\tau^+g_hD^+u_h\wedge D^+\Delta_{g_h}u_h+u_h\wedge \Delta_{g_h}^2u_h)\notag\\
&=& u_h\wedge\Delta_{g^t_h}u_h+(u_h\cdot\Delta_{g_h}u_h )\Delta_{g_h}u_h-|\Delta_{g_h}u_h|^2u_h +(u_h\cdot\Delta_{g_h}^2u_h )u_h-\Delta_{g_h}^2u_h\notag\\
&& + E,
\end{eqnarray}
where 
\begin{eqnarray*}
E&=& g_hu_h\wedge(D^-u_h\wedge D^-\Delta_{g_h}u_h)+\tau^+g_hu_h\wedge(D^+u_h\wedge D^+\Delta_{g_h}u_h)\\
&=& g_h(u_h\cdot D^-\Delta_{g_h}u_h)D^-u_h+\tau^+g_h(u_h\cdot D^+\Delta_{g_h}u_h)D^+u_h\\
&&-g_h(u_h\cdot D^-u_h)D^-\Delta_{g_h}u_h-\tau^+g_h(u_h\cdot D^+u_h)D^+\Delta_{g_h}u_h.
\end{eqnarray*}
Furthermore, we have 
$$u_h\cdot D^\pm u_h =\mp \frac{h}{2}(D^\pm u_h)^2,$$ 
hence
\begin{eqnarray*}
\tau^+g_h(u_h\cdot D^+u_h)D^+\Delta_{g_h}u_h&=&-\frac{h}{2}\tau^+g_h(D^+u_h)^2D^+\Delta_{g_h}u_h\\
&=& -\frac{h}{2}\left\{D^-[(D^+u_h)^2\tau^+(g_h\Delta_{g_h}u_h)]-D^-(\tau^+g_h(D^+u_h)^2)\Delta_{g_h}u_h\right\}\\
&=& -\frac{h}{2}\left\{D^+[g_h(D^-u_h)^2\Delta_{g_h}u_h]-D^+(g_h(D^-u_h)^2)\Delta_{g_h}u_h\right\},
\end{eqnarray*}
and
\begin{eqnarray*}
g_h(u_h\cdot D^-u_h)D^-\Delta_{g_h}u_h&=&\frac{h}{2}g_h(D^-u_h)^2D^-\Delta_{g_h}u_h\\
&=& \frac{h}{2}\left\{D^+[g_h(D^-u_h)^2\tau^-\Delta_{g_h}u_h]-D^+(g_h(D^-u_h)^2)\Delta_{g_h}u_h\right\},
\end{eqnarray*}
which together give  
\begin{equation}\label{derivu_h1}
-\tau^+g_h(u_h\cdot D^+u_h)D^+\Delta_{g_h}u_h-g_h(u_h\cdot D^-u_h)D^-\Delta_{g_h}u_h=\frac{h^2}{2}D^+[g_h(D^-u_h)^2D^-\Delta_{g_h}u_h].
\end{equation}
On the other hand, we have 
\begin{eqnarray*}
u_h\cdot\Delta_{g_h}u_h&=&u_h\cdot(D^+g_hD^-u_h+\tau^+g_hD^+D^-u_h)\\
&=&\frac{h}{2}D^+g_h(D^-u_h)^2-\frac{1}{2}\tau^+g_h((D^-u_h)^2+(D^+u_h)^2)\\
&=&-\frac{1}{2}(g_h(D^-u_h)^2+\tau^+g_h(D^+u_h)^2),
\end{eqnarray*}
hence 
\begin{eqnarray}\label{derivu_h2}
u_h\cdot D^\pm\Delta_{g_h}u_h&=& D^\pm(u_h\cdot\Delta_{g_h}u_h)-D^\pm u_h\cdot\tau^\pm(\Delta_{g_h}u_h)\notag\\
&=&-\frac{1}{2}D^\pm\left(g_h(D^-u_h)^2+\tau^+g_h(D^+u_h)^2\right)-D^\pm u_h\cdot\tau^\pm(\Delta_{g_h}u_h).
\end{eqnarray}
Combining (\ref{derivu_h1}) and (\ref{derivu_h2}) we find that 
\begin{eqnarray*}
E&=& \frac{h^2}{2}D^+[g_h(D^-u_h)^2D^-\Delta_{g_h}u_h]\\
&& -\frac{1}{2}g_hD^-\left(g_h(D^-u_h)^2+\tau^+g_h(D^+u_h)^2\right)D^-u_h-g_h(D^- u_h\cdot\tau^-\Delta_{g_h}u_h)D^-u_h\\
&&-\frac{1}{2}\tau^+g_hD^+\left(g_h(D^-u_h)^2+\tau^+g_h(D^+u_h)^2\right)D^+u_h-\tau^+g_h(D^+u_h\cdot\tau^+\Delta_{g_h}u_h)D^+u_h.
\end{eqnarray*}
Taking  the $L_h^2-$scalar product in (\ref{derivu_htt1}) with $\frac{du_h}{dt}$ and using  $u_h\cdot\frac{du_h}{dt}=0,$
$\Delta_{g_h}u_h\cdot\frac{du_h}{dt}=0$ and 
$$\Delta_{g_h}(\frac{du_h}{dt})= \frac{d}{dt}\Delta_{g_h}(u_h)-\Delta_{g^t_h}u_h,$$
we obtain by integration by parts
$$\frac{1}{2}\frac{d}{dt}\left(\left|\frac{du_h}{dt}\right|_h^2+|\Delta_{g_h}u_h|_h^2 \right)=J_1+J_2+I_1+I_2^+ +I_2^-+I_3^++I_3^-,$$
where 
$$J_1 = (\Delta_{g^t_h}u_h,\Delta_{g_h}u_h)_h,$$
$$J_2 = (u_h\wedge\Delta_{g^t_h}u_h,u_h\wedge\Delta_{g_h}u_h)_h,$$
$$I_1=\frac{h^2}{2}\left(D^+[g_h(D^-u_h)^2D^-\Delta_{g_h}u_h],\frac{du_h}{dt}\right)_h,$$
$$I_2^+=-\frac{1}{2}\left(\tau^+g_hD^+\left(g_h(D^-u_h)^2+\tau^+g_h(D^+u_h)^2\right)D^+u_h,\frac{du_h}{dt}\right)_h,$$
$$I_2^-=-\frac{1}{2}\left(g_hD^-\left(g_h(D^-u_h)^2+\tau^+g_h(D^+u_h)^2\right)D^-u_h,\frac{du_h}{dt}\right)_h,$$
$$I_3^+=-\frac{1}{2}\left(\tau^+g_h(D^+u_h\cdot\tau^+\Delta_{g_h}u_h)D^+u_h,\frac{du_h}{dt}\right)_h,$$
$$I_3^-=-\frac{1}{2}\left(g_h(D^- u_h\cdot\tau^-\Delta_{g_h}u_h)D^-u_h,\frac{du_h}{dt}\right)_h.$$
To estimate these terms, we use essentially  the H\"{o}lder inequality and Lemmas \ref{Pd} and \ref{Sobolev_discret}. We start by 
\begin{eqnarray}\label{major0}
 J_1+J_2 &\leq& 2|\Delta_{g^t_h}u_h|_h|\Delta_{g_h}u_h|_h\notag\\
 &\leq& 2(\beta'_1|D^+u_h|_h+\beta_1|D^2u_h|_h) |\Delta_{g_h}u_h|_h.
\end{eqnarray}
Then, we have on the one hand 
\begin{eqnarray}\label{major1}
I_1 &\leq& \frac{h^2}{2}|D^+g_h(D^-u_h)^2D^-\Delta_{g_h}u_h|_h\left|\frac{du_h}{dt}\right|_h \notag\\
&\leq& h|g_h(D^-u_h)^2D^-\Delta_{g_h}u_h|_h\left|\frac{du_h}{dt}\right|_h\notag\\
&\leq& h\beta|D^-u_h|_{L_h^\infty}^2|D^-\Delta_{g_h}u_h|_h\left|\frac{du_h}{dt}\right|_h\notag \\
&\leq& 2C\beta|D^-u_h|_{H_h^1}^2|\Delta_{g_h}u_h|_h\left|\frac{du_h}{dt}\right|_h,
\end{eqnarray}
and on the other hand $I_2^+= I_{21}^++I_{22}^+,$ with
$$I_{21}^+=-\frac{1}{2}\left(\tau^+g_hD^+(g_h(D^-u_h)^2)D^+u_h,\frac{du_h}{dt}\right)_h,\quad
I_{22}^+=-\frac{1}{2}\left(\tau^+g_hD^+(\tau^+g_h(D^+u_h)^2)D^+u_h,\frac{du_h}{dt}\right)_h.$$
Moreover, 
$$I_{21}^+=-\frac{1}{2}\left(\tau^+g_h\left(D^+g_h(D^-u_h)^2+\tau^+g_h(D^-+\tau^+D^-)u_h\cdot D^+D^-u_h\right)D^+u_h,\frac{du_h}{dt}\right)_h,$$
hence
\begin{eqnarray*}
I_{21}^+&\leq& \frac{1}{2}\beta\left(\beta'|D^-u_h|_h+2\beta|D^+D^-u_h|_h\right)|D^-u_h|_{L_h^\infty}^2\left|\frac{du_h}{dt}\right|_h\\
&\leq& \frac{1}{2}C\beta\left(\beta'|D^-u_h|_h+2\frac{\beta}{\alpha}|\Delta_{g_h}u_h|_h\right)|D^-u_h|_{H_h^1}^2\left|\frac{du_h}{dt}\right|_h.
\end{eqnarray*}
Similarly, we find that  
$$I_{22}^+\leq \frac{1}{2}C\beta\left(\beta'|D^-uh|_h+2\frac{\beta}{\alpha}|\Delta_{g_h}u_h|_h\right)|D^-u_h|_{H_h^1}^2\left|\frac{du_h}{dt}\right|_h,$$
then
\begin{equation}\label{major2}
I_2^+\leq C\beta\left(\beta'|D^-u_h|_h+2\frac{\beta}{\alpha}|\Delta_{g_h}u_h|_h\right)|D^-u_h|_{H_h^1}^2\left|\frac{du_h}{dt}\right|_h.
\end{equation}
For $I_3^+$ we easily note that
\begin{equation}\label{major3}
I_3^+\leq \frac{1}{2}C\beta|D^-u_h|_{H_h^1}^2|\Delta_{g_h}u_h|_h\left|\frac{du_h}{dt}\right|_h.
\end{equation}
The two terms $I_3^-$ and $I_2^-$ can be estimated in the same way followed to estimate $I_3^+$ and $I_2^+$. Since
\begin{eqnarray}\label{major4}
|D^-u_h|_{H_h^1}^2 &=& |D^-u_h|_h^2+|D^+D^-u_h|_h^2\notag\\
&\leq& |D^-u_h|_h^2+\frac{1}{\alpha^2}|\Delta_{g_h}u_h|_h^2,
\end{eqnarray}
we get by combining (\ref{major0}), (\ref{major1}), (\ref{major2}), (\ref{major3}), (\ref{major4}) and (\ref{dert_borne}) 
\begin{equation}\label{gronwall1}
\frac{d}{dt}\left(\left|\frac{du_h}{dt}\right|_h^2+|\Delta_{g_h}u_h|_h^2 \right)\leq 
C_1\left(\left|\frac{du_h}{dt}\right|_h^2+|\Delta_{g_h}u_h|_h^2 \right)^2 +C_2,
\end{equation}
where $C_1, C_2>0$ are two constants depending on $\alpha, \beta, \beta_1, \beta', \beta'_1$  and $|D^+u_h^0|_h.$

Then we establish an a priori estimate in $D^-\frac{du_h}{dt}$ and $D^-\Delta_{g_h}u_h.$
We denote $$A_{g_h}u_h=\frac{1}{2}(g_h(D^-u_h)^2+\tau^+g(D^+u_h)^2).$$
We have found that 
\begin{equation}\label{deriv7}
\frac{d^2u_h}{dt^2}+\Delta_{g_h}^2u_h=(u_h\cdot\Delta_{g_h}u_h )\Delta_{g_h}u_h-|\Delta_{g_h}u_h|^2u_h +(u_h\cdot\Delta_{g_h}^2u_h )u_h
+  u_h\wedge\Delta_{g^t_h}u_h+ E,
\end{equation}
where 
\begin{eqnarray}\label{deriv8}
E&=& \frac{h^2}{2}D^+[g_h(D^-u_h)^2D^-\Delta_{g_h}u_h]\notag\\
&& -g_hD^-(A_{g_h}u_h)D^-u_h-g_h(D^- u_h\cdot\tau^-\Delta_{g_h}u_h)D^-u_h\notag\\
&&-\tau^+g_hD^+(A_{g_h}u_h)D^+u_h-\tau^+g_h(D^+u_h\cdot\tau^+\Delta_{g_h}u_h)D^+u_h.
\end{eqnarray}
Moreover, using (\ref{delta_gh}), we get 
\begin{eqnarray}\label{deriv9}
u_h\cdot\Delta_{g_h}^2(u_h)&=& \Delta_{g_h}(u_h\cdot\Delta_{g_h}u_h)-|\Delta_{g_h}u_h|^2
-g_hD^-\Delta_{g_h}u\cdot D^-u_h-\tau^+g_hD^+\Delta_{g_h}u\cdot D^+u_h\notag\\
&=& -\Delta_{g_h}(A_{g_h}u_h)-|\Delta_{g_h}u_h|^2-g_hD^-\Delta_{g_h}u\cdot D^-u_h\notag\\
&& -\tau^+g_hD^+\Delta_{g_h}u\cdot D^+u_h.
\end{eqnarray}
Thus Combining (\ref{deriv7}), (\ref{deriv8}) and (\ref{deriv9}), we get
\begin{eqnarray}\label{deriv10}
\frac{d^2u_h}{dt^2}+\Delta_{g_h}^2u_h &=&-\Delta_{g_h}(A_{g_h}u_h)u_h-A_{g_h}u_h\Delta_{g_h}u_h-\tau^+g_hD^+(A_{g_h}u_h)D^+u_h-g_hD^-(A_{g_h}u_h)D^-u_h\notag\\
&& -g_h(D^- u_h\cdot\tau^-\Delta_{g_h}u_h)D^-u -g_hD^-\Delta_{g_h}u\cdot D^-u_h -|\Delta_{g_h}u_h|^2\notag\\
&& -\tau^+g_h(D^+u_h\cdot\tau^+\Delta_{g_h}u_h)D^+u_h-\tau^+g_hD^+\Delta_{g_h}u\cdot D^+u_h -|\Delta_{g_h}u_h|^2\notag\\
&& +\frac{h^2}{2}D^+[g_h(D^-u_h)^2D^-\Delta_{g_h}u_h]+u_h\wedge\Delta_{g^t_h}u_h,
\end{eqnarray}
where 
$$-g_h(D^- u_h\cdot\tau^-\Delta_{g_h}u_h)D^-u -g_hD^-\Delta_{g_h}u\cdot D^-u_h -|\Delta_{g_h}u_h|^2=-D^-(\tau^+g_h(D^+u_h\cdot\Delta_{g_h}u_h)u_h),$$
$$-\tau^+g_h(D^+u_h\cdot\tau^+\Delta_{g_h}u_h)D^+u_h-\tau^+g_hD^+\Delta_{g_h}u\cdot D^+u_h -|\Delta_{g_h}u_h|^2= -D^+(g_h(D^-u_h\cdot\Delta_{g_h}u_h)u_h).$$
We have 
\begin{equation*}
\left\{
\begin{array}{lr}
D^+u_h\cdot\Delta_{g_h}u_h= D^+g_h|D^+u_h|^2+\frac{1}{2}g_hD^+(|D^-u_h|^2)+\frac{h}{2}g_h|D^+D^-u_h|^2,&\\
D^-u_h\cdot\Delta_{g_h}u_h= D^-g_h|D^-u_h|^2+\frac{1}{2}\tau^+g_hD^-(|D^+u_h|^2)+\frac{h}{2}\tau^+g_h|D^+D^-u_h|^2,&
\end{array}
\right.
\end{equation*}
and 
\begin{equation*}
\left\{
\begin{array}{lr}
g_hD^+(|D^-u_h|^2)u_h= D^+(g_h|D^-u_h|^2u_h)-\tau^+(|D^-u_h|^2)D^+(g_hu_h),&\\
\tau^+g_hD^-(|D^+u_h|^2)u_h= D^-(\tau^+g_h|D^+u_h|^2u_h)-\tau^-(|D^+u_h|^2)D^-(g_hu_h),&\\
\end{array}
\right.
\end{equation*}
then
\begin{eqnarray*}
D^-(\tau^+g_h(D^+u_h\cdot\Delta_{g_h}u_h)u_h)&=& \frac{1}{2}\Delta_{g_h}(g_h|D^-u_h|^2u_h)+\frac{1}{2}D^-(\tau^+g_h|D^-u_h|^2[D^+g_hu_h-\tau^+g_hD^+u_h])\\
&&+\frac{h}{2}D^-(\tau^+g_h g_h|D^+D^-u_h|^2),
\end{eqnarray*}
and 
\begin{eqnarray*}
D^+(g_h(D^-u_h\cdot\Delta_{g_h}u_h)u_h)&=& \frac{1}{2}\Delta_{g_h}(\tau^+g_h|D^+u_h|^2u_h)+\frac{1}{2}D^+(g_h|D^+u_h|^2[D^+g_hu_h-g_hD^-u_h])\\
&&+\frac{h}{2}D^+(\tau^+g_h g_h|D^+D^-u_h|^2).
\end{eqnarray*}
Thus equation (\ref{deriv10}) can be rewritten as 
\begin{eqnarray}\label{deriv11}
\frac{d^2u_h}{dt^2}+\Delta_{g_h}^2u_h &=&-2\Delta_{g_h}((A_{g_h}u_h)u_h)+u_h\wedge\Delta_{g^t_h}u_h\notag\\
&& +\frac{1}{2}D^+\left(g_h|D^+u_h|^2[2g_hD^-u_h-D^+g_hu_h-D^-g_h\tau^-u_h]\right)\notag\\
&& -\frac{h}{2}(D^++D^-)(\tau^+g_h g_h|D^+D^-u_h|^2)+\frac{h^2}{2}D^+\left(g_h(D^-u_h)^2D^-\Delta_{g_h}u_h\right).
\end{eqnarray}
Applying operator $D^-$ on (\ref{deriv11}) and taking the $L_h^2-$scalar product with $g_hD^-\frac{du_h}{dt}$, we get, after 
integration by parts,
$$\frac{h}{2}\frac{d}{dt}\sum_i g_h(x_i)\left(\left|D^-\frac{du_h}{dt}(x_i)\right|^2+|D^-\Delta_{g_h}u_h(x_i)|^2\right)=I_1+I_2+I_3+I_4+J_1+J_2+J_3,$$
with 
$$I_1= -2\left(D^-\Delta_{g_h}((A_{g_h}u_h)u_h),g_hD^-\frac{du_h}{dt}\right)_h,$$
$$I_2=\frac{1}{2}\left(D^-D^+(g_h|D^+u_h|^2[2g_hD^-u_h-D^+g_hu_h-D^-g_h\tau^-u_h]),g_hD^-\frac{du_h}{dt}\right)_h,$$
$$I_3=-\frac{1}{2}\left(hD^-(D^++D^-)(\tau^+g_h g_h|D^+D^-u_h|^2),g_hD^-\frac{du_h}{dt}\right)_h,$$
$$I_4 = \frac{1}{2}\left(h^2D^-D^+[g_h(D^-u_h)^2D^-\Delta_{g_h}u_h],g_hD^-\frac{du_h}{dt}\right)_h,$$
$$J_1 = \left(D^-(u_h\wedge\Delta_{g^t_h}u_h,g_hD^-\frac{du_h}{dt})\right)_h,$$
$$J_2 = (g_hD^-\Delta_{g_h}u_h, D^-\Delta_{g^t_h}u_h)_h,$$
$$J_3 = \frac{h}{2}\frac{d}{dt}\sum_i g^t_h(x_i)\left(\left|D^-\frac{du_h}{dt}(x_i)\right|^2+|D^-\Delta_{g_h}u_h(x_i)|^2\right).$$

We start by estimating $J_1, J_2$ and $J_3.$ We have 
\begin{eqnarray}\label{estimA}
|J_1|&\leq& \beta|D^-u_h|_{L^\infty_h}(\beta_1|D^2u_h|_h+\beta'_1|D^+u_h|_h)\left|D^-\frac{du_h}{dt}\right|_h\notag\\
&& + \beta(2\beta'_1|D^2u_h|_h+\beta''_1|D^+u_h|_h+\beta|D^3u_h|_h)\left|D^-\frac{du_h}{dt}\right|_h,
\end{eqnarray}
\begin{equation}\label{estimB}
|J_2| \leq \beta(2\beta'_1|D^2u_h|_h+\beta''_1|D^+u_h|_h+\beta|D^3u_h|_h)\left|D^-\Delta_{g_h}u_h\right|_h,
\end{equation}
\begin{equation}\label{estimC}
|J_3| \leq \frac{1}{2}\beta_1\left(\left|D^-\Delta_{g_h}u_h\right|^2_h+\left|D^-\frac{du_h}{dt}\right|^2_h\right).
\end{equation}
For the term $I_2$, we have
\begin{eqnarray}\label{estim1}
|I_2|&\leq& \frac{1}{2}\beta\{2|D^2(g_h^2|D^+u_h|^2D^-u_h)|_h+ |D^2(g_hD^+g_h|D^+u_h|^2u_h)|_h\notag\\
&& + |D^2(g_hD^-g_h|D^+u_h|^2\tau^-u_h)|_h\}\left|D^-\frac{du_h}{dt}\right|_h,
\end{eqnarray}
and 
\begin{eqnarray}\label{estim2}
|D^2(g_h^2|D^+u_h|^2D^-u_h)|_h &\leq& C\{((\beta'^2+\beta\beta'')|D^+u_h|_{L^\infty_h}^2+\beta^2|D^2u_h|_{L^\infty_h}^2)|D^+u_h|_h\notag\\
&&+\beta\beta'|D^+u_h|_{L^\infty_h}^2|D^2u_h|_h+\beta^2|D^+u_h|_{L^\infty_h}^2|D^3u_h|_h\}.
\end{eqnarray}
We also have 
\begin{eqnarray}\label{estim3}
|D^2(g_hD^+g_h|D^+u_h|^2u_h)|_h &\leq& C\{((\beta\beta'''+2\beta'\beta'')|D^+u_h|_{L_h^\infty}+(\beta'^2+\beta\beta'')|D^+u_h|_{L_h^\infty}^2)|D^+u_h|_h\notag\\
&&+(\beta\beta'|D^+u_h|_{L_h^\infty}^2+(\beta'^2+\beta\beta'')|D^+u_h|_{L_h^\infty})|D^2u_h|_h\notag\\
&&+\beta\beta'|D^+u_h|_{L_h^\infty}|D^3u_h|_h\}.
\end{eqnarray}
The term $|D^2(g_hD^-g_h|D^+u_h|^2\tau^-u_h)|_h$ can be bounded from above by the same term of the right-hand side of (\ref{estim3}). 
To find a suitable estimate for $I_1,$ we first write
\begin{eqnarray*}
D^-\Delta_{g_h}(g_h|D^-u_h|^2u_h)&=&D^2(g_hD^-(g_h|D^-u_h|^2u_h))\\
&=& D^2(g_h^2\tau^-|D^-u_h|^2D^-u_h+g_hD^-g_h\tau^-|D^-u_h|^2\tau^-u_h+g_h^2D^-(|D^-u_h|^2)u_h).
\end{eqnarray*}
Thus the two terms $|D^2(g_h^2\tau^-|D^-u_h|^2D^-u_h)|_h$ and $|D^2(g_hD^-g_h\tau^-|D^-u_h|^2\tau^-u_h)|_h$ can be bounded from above by the
members of right-hand sides of (\ref{estim2}) and (\ref{estim3}) respectively. For the term $D^2(g_h^2D^-(|D^-u_h|^2)u_h)$, we have 
\begin{equation}\label{estim33}
\left(D^2(g_h^2D^-(|D^-u_h|^2)u_h),g_hD^-\frac{du_h}{dt}\right)_h= I_{21}+\left(D^3(|D^-u_h|^2)u_h,g_h^3D^-\frac{du_h}{dt}\right)_h, 
\end{equation}
with
\begin{eqnarray}\label{estim4}
I_{21}&\leq& C\beta\{\beta^2|D^2u_h|_{L_h^\infty}^2|D^+u_h|h+((\beta\beta''+\beta'^2)|D^+u_h|_{L_h^\infty}+\beta\beta'|D^2u_h|_{L_h^\infty}
+\beta\beta'|D^+u_h|_{L_h^\infty}^2)|D^2u_h|_h\notag\\
&&+(\beta\beta'|D^+u_h|_{L_h^\infty}+\beta^2|D^2u_h|_{L_h^\infty})|D^3u_h|_h\}|D^-\frac{du_h}{dt}|_h.
\end{eqnarray}
Integrating by parts the second term of the right-hand side  of (\ref{estim33}), we obtain
\begin{eqnarray*}
\left(D^3(|D^-u_h|^2)u_h,g_h^3D^-\frac{du_h}{dt}\right)_h&=&-\left(D^2(|D^-u_h|^2)u_h,D^+g_h^3D^-\frac{du_h}{dt}\right)_h\\
&& -h\sum_ig^3(x_i)D^2(|D^-u_h|^2)(x_i)D^+(u_h\cdot D^-\frac{du_h}{dt})(x_i).
\end{eqnarray*}
Moreover, since $u_h\cdot\frac{du_h}{dt}=0,$ we have 
\begin{eqnarray*}
D^+(u_h\cdot D^-\frac{du_h}{dt})&=& D^+u_h\cdot D^+\frac{du_h}{dt}+u_h\cdot D^2\frac{du_h}{dt}-D^2(u_h\cdot\frac{du_h}{dt})\\
&=& -D^2u_h\cdot\frac{du_h}{dt}-D^-u_h\cdot D^-\frac{du_h}{dt}.
\end{eqnarray*}
Consequently, we get  
\begin{eqnarray}\label{estim5}
\left(D^3(|D^-u_h|^2)u_h,g_h^3D^-\frac{du_h}{dt}\right)_h&\leq& C\{ \beta^3|D^2u_h|_{L_h^\infty}^2|D^+u_h|_h
+\beta' \beta^2|D^2u_h|_{L_h^\infty}|D^2u_h|_h\notag\\
&& +(\beta'\beta^2|D^+u_h|_{L_h^\infty}+\beta^3|D^+u_h|_{L_h^\infty}^2)|D^3u_h|_h\}\left|D^-\frac{du_h}{dt}\right|_h\notag\\
&& +\beta^3\{|D^+u_h|_{L_h^\infty}|D^2u_h|_{L_h^\infty}|D^3u_h|_h\notag\\
&&+|D^2u_h|_{L_h^\infty}^2|D^2u_h|_h\}\left|\frac{du_h}{dt}\right|_h.
\end{eqnarray}
In view of the definition of $I_3$ and $I_4,$ we have  
$$|I_3|\leq h\beta|D^2(g_h\tau^+g_h|D^2u_h|^2)|_h\left|D^-\frac{du_h}{dt}\right|_h,$$
and 
$$|I_4|\leq\frac{1}{2}h^2\beta|D^2(g_h|D^-u_h|^2D^-\Delta_{g_h}u_h)|_h\left|D^-\frac{du_h}{dt}\right|_h,$$
where, applying  Lemma \ref{Pd}, we get 
$$h|D^2(g_h\tau^+g_h|D^2u_h|^2)|_h\leq 2|D^+(g_h\tau^+g_h|D^2u_h|^2)|_h,$$
and 
$$h^2|D^2(g_h|D^2u_h|^2D^-\Delta_{g_h}u_h)|_h\leq 4|g_h|D^-u_h|^2D^-\Delta_{g_h}u_h|_h,$$
which gives together with previous estimates of $I_3$ and $I_4$ 
\begin{equation}\label{estim6}
|I_3|\leq C\beta^2|D^2u_h|_{L_h^\infty}(\beta'|D^2u_h|_h+\beta|D^3u_h|_h)|D^-\frac{du_h}{dt}|_h,
\end{equation}
and $$|I_4|\leq 2\beta^2|D^-u_h|_{L_h^\infty}^2|D^-\Delta_{g_h}u_h|_h|D^-\frac{du_h}{dt}|_h.$$
Since 
\begin{equation}\label{D_Delta}
D^-\Delta_{g_h}u_h = D^2g_hD^-u_h+g_hD^3u_h+D^+g_hD^2u_h+D^-g_hD^-D^-u_h,
\end{equation}
we obtain 
\begin{equation}\label{estim7}
|I_4|\leq 2\beta^2|D^-u_h|_{L_h^\infty}^2(\beta''|D^-u_h|_h+2\beta'|D^2u_h|_h+\beta|D^3u_h|_h)\left|D^-\frac{du_h}{dt}\right|_h.
\end{equation}
Combining (\ref{estimA} - \ref{estim6}) and (\ref{estim7}), we finally get 
\begin{equation}\label{gronwall2}
\frac{1}{2}\frac{d}{dt}h\sum_i g_h(x_i)\left(\left|D^-\frac{du_h}{dt}(x_i)\right|^2+|D^-\Delta_{g_h}u_h(x_i)|^2\right)\leq CA_1A_2,
\end{equation}
with $A_1= |D^+u_h|_{L_h^\infty}+|D^+u_h|_{L_h^\infty}^2+|D^2u_h|_{L_h^\infty}+|D^2u_h|_{L_h^\infty}^2,$
$A_2 = |\frac{du_h}{dt}|_{H_h^1}^2+|D^2u_h|_{H_h^1}^2+|D^+u_h|_h^2$ and 
 $C>0$ is some constant depending on $\beta,\beta_1 \beta',\beta'_1, \beta'',\beta''_1$ and $\beta'''.$                                                          
\subsubsection{Step 2}
We construct the sequence $\{u_h^0\}_h$ such that 
\begin{equation}\label{cond_di1}\left\{\begin{array}{lr}
Q_hu_h^0\rightarrow u_0\quad\text{in}\quad L_{loc}^2(\mathbb{R}),&\\
Q_hD^+u_h^0 \rightarrow \frac{du_0}{dx}\quad\text{in}\quad L^2(\mathbb{R}),&\\
Q_hD^2u_h^0 \rightarrow \frac{d^2u_0}{dx^2}\quad\text{in}\quad L^2(\mathbb{R}),&\\
Q_hD^3u_h^0 \rightarrow \frac{d^3u_0}{dx^3}\quad\text{in}\quad L^2(\mathbb{R}),&\\
\end{array}\right.\end{equation}
then
\begin{lemma}\label{lem_born2}
There exists $T_1>0$ such that the sequences $\{\partial_tP_hu_h\}_h,$ $\{\partial_tP_hD^-u_h\}_h,$
$\{P_hD^2u_h\}_h$ and $\{P_hD^3u_h\}_h$ are bounded in $L^\infty(0,T_1,L^2(\mathbb{R})).$
\end{lemma}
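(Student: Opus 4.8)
The plan is to convert the two a priori estimates (\ref{gronwall1}) and (\ref{gronwall2}) of Step 1 into a single Riccati-type differential inequality for the scalar quantity $W_h=y_h+z_h$, where
$$y_h(t)=\left|\frac{du_h}{dt}\right|_h^2+|\Delta_{g_h}u_h|_h^2,\qquad z_h(t)=\left|D^-\frac{du_h}{dt}\right|_h^2+|D^-\Delta_{g_h}u_h|_h^2$$
are the quantities governed by (\ref{gronwall1}) and (\ref{gronwall2}) respectively, and then to read off a time $T_1>0$, independent of $h$, on which $W_h$ stays bounded. The whole scheme rests on two facts: that $W_h(0)$ is bounded uniformly in $h$, and that the comparison ODE $\bar W'=C(1+\bar W)^2$ has a blow-up time bounded below independently of $h$ as soon as its initial value is.

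First I would check the uniform bound on $W_h(0)$. Writing $\Delta_{g_h}u_h^0=D^+g_h\,D^-u_h^0+\tau^+g_h\,D^2u_h^0$ and applying $D^-$ to this relation as in (\ref{D_Delta}), every term is a product of a bounded factor ($g_h$ and its discrete derivatives, or $u_h^0$ with $|u_h^0|=1$) with one of the discrete derivatives $D^-u_h^0$, $D^2u_h^0$, $D^3u_h^0$, all of which converge in $L^2$ by (\ref{cond_di1}) and are therefore bounded in $|\cdot|_h$. Since $\frac{du_h}{dt}=u_h\wedge\Delta_{g_h}u_h$ and $|u_h|=1$, the terms of $W_h(0)$ containing $\frac{du_h}{dt}$ are dominated by those containing $\Delta_{g_h}u_h$, so $W_h(0)\leq M_0$ for all $h$.

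Next I would close the Riccati inequality. Two structural facts reduce everything to $W_h$ plus already-bounded lower-order quantities: the relation $\alpha|D^2u_h|_h\leq|\Delta_{g_h}u_h|_h+\beta'|D^-u_h|_h$ and its differentiated form (\ref{D_Delta}), which gives $\alpha|D^3u_h|_h\lesssim|D^-\Delta_{g_h}u_h|_h$ up to lower order, together with the fact that $|D^-u_h|_h=|D^+u_h|_h$ is already controlled on $[0,T]$ by the first-order bound (\ref{dert_borne}). Feeding these into the discrete Sobolev embedding of Corollary \ref{Sobolev_discret} bounds the $L_h^\infty$ norms appearing in $A_1$ and hence gives $A_1\lesssim 1+W_h$ and $A_2\lesssim 1+W_h$ on $[0,T]$; after absorbing the weight $g_h$ (comparable to $1$ since $\alpha\leq g_h\leq\beta$), (\ref{gronwall2}) yields $z_h'\lesssim(1+W_h)^2$, while (\ref{gronwall1}) gives $y_h'\leq C_1W_h^2+C_2$. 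Adding these produces $W_h'\leq C(1+W_h)^2$ with $C$ independent of $h$, and comparison with the Riccati ODE then furnishes $T_1>0$ and a constant $M$, both independent of $h$, with $W_h(t)\leq M$ on $[0,T_1]$.

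I expect the bound $A_1A_2\lesssim(1+W_h)^2$ to be the main obstacle. The delicate point is that $A_1$ already contains $|D^2u_h|_{L_h^\infty}^2$, which through Corollary \ref{Sobolev_discret} and (\ref{D_Delta}) is itself of size $W_h$; consequently the right-hand side of (\ref{gronwall2}) is genuinely quadratic in $W_h$, which rules out a linear Gr\"onwall bound and restricts us to local-in-time control, and one must check throughout that the lower-order remainders generated by (\ref{D_Delta}) involve only quantities already bounded uniformly in $h$. To conclude, since $P_h$ commutes with $\partial_t$ and, by Lemma \ref{equiv_norms}, $\|P_hv_h\|_{L^2(\mathbb{R})}$ is comparable to $|v_h|_h$, the uniform bound $W_h\leq M$ on $[0,T_1]$, together with the relations above controlling $|D^2u_h|_h$ and $|D^3u_h|_h$, transfers to the interpolants and gives the boundedness of $\{\partial_tP_hu_h\}_h$, $\{\partial_tP_hD^-u_h\}_h$, $\{P_hD^2u_h\}_h$ and $\{P_hD^3u_h\}_h$ in $L^\infty(0,T_1,L^2(\mathbb{R}))$.
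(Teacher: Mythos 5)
Your proposal is correct and follows essentially the same route as the paper: both convert the Step 1 estimates (\ref{gronwall1})--(\ref{gronwall2}) into Riccati-type inequalities with $h$-independent constants and $h$-independent initial values (via (\ref{cond_di1})), use Corollary \ref{Sobolev_discret} together with the relations $\Delta_{g_h}u_h=D^+g_hD^-u_h+\tau^+g_hD^2u_h$ and (\ref{D_Delta}) to control $|D^2u_h|_h$, $|D^3u_h|_h$ and the $L_h^\infty$ norms, and conclude through Lemma \ref{equiv_norms}. The only, inessential, difference is organizational: the paper bootstraps in two stages --- first a Riccati bound for $\left|\frac{du_h}{dt}\right|_h^2+|\Delta_{g_h}u_h|_h^2$ on $[0,\tilde T]$, then, with that bound absorbed into the constants, a second Riccati bound for the $D^-$-level quantity on a smaller $[0,T_1]$ --- whereas you close a single Riccati inequality for the sum $W_h=y_h+z_h$ in one pass.
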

\begin{proof}
Let $T> \frac{1}{\sqrt{C_1C_2}}.$ For  $t\in [0,T]$ we denote
$$G(t)=C_2T+\left|\frac{du_h}{dt}(0)\right|_h^2+|\Delta_{g_h}u_h(0)|_h^2
+C_1\int_0^t\left(\left|\frac{du_h}{dt}(\tau)\right|_h^2+|\Delta_{g_h}u_h(\tau)|_h^2 \right)^2d\tau,$$
where $C_1$ and $C_2$ are the constants of  inequality (\ref{gronwall1}), hence $\frac{1}{G}\in W^{1,\infty}(0,T)$ 
and in view of (\ref{gronwall1}) we have 
$$\left(\frac{1}{G(t)}\right)'\leq C_1,\quad \text{almost everywhere on}\quad ]0,T[.$$
then we have
$$C_1t+\frac{1}{G(t)}\geq \frac{1}{G(0)},\quad \forall t\in [0,T[,$$
and  
$$G(t)\leq \frac{G(0)}{1-C_1G(0)t},\quad \forall t\in [0,(C_1G(0))^{-1}[.$$
Since  
\begin{eqnarray*}
G(0)= C_2T+\left|\frac{du_h}{dt}(0)\right|_h^2+|\Delta_{g_h}u_h(0)|_h^2&\leq& 2|\Delta_{g_h}u_h(0)|_h^2+C_2T\\
&\leq& 4\beta'^2|D^+u_h^0|_h^2+4\beta^2|D^+D^-u_h^0|_h^2+C_2T,
\end{eqnarray*}
the sequences $\{|D^+u_h^0|_h\}_h$ and $\{|D^+D^-u_h^0|_h\}_h$ are bounded. Thus there exists $M>0$ such that 
$$4\beta'^2|D^+u_h^0|_h^2+4\beta^2|D^+D^-u_h^0|_h^2+C_2T\leq M,$$ then
$$G(0)^{-1}\geq M^{-1}>0.$$
Let $\tilde{T}=\frac{1}{2}(C_1M)^{-1}.$ Then, for all $t\in [0,\tilde{T}],$ we have 
\begin{equation}\label{gronwall3}
\left|\frac{du_h}{dt}\right|_h^2+|\Delta_{g_h}u_h|_h^2 \leq G(t)\leq \frac{M}{1-\frac{1}{2}M^{-1}G(0)}\leq 2M.
\end{equation}
In view of Corollary \ref{Sobolev_discret}, there exists $C>0$ such that 
$$|D^+u_h|_{L_h^\infty}\leq C |D^+u_h|_{H_h^1},\quad |D^2u_h|_{L_h^\infty}\leq C |D^2u_h|_{H_h^1}.$$
Thus combining (\ref{gronwall2}) and (\ref{gronwall3}), we have for all $t\in [0, \tilde{T}]$
\begin{equation}
\frac{1}{2}\frac{d}{dt}h\sum_i g_h(x_i)\left(\left|D^-\frac{du_h}{dt}(x_i)\right|^2+|D^-\Delta_{g_h}u_h(x_i)|^2\right)\leq
C_1(|D^-\frac{du_h}{dt}|_h^2+|D^-\Delta_{g_h}u_h|_h^2)^2+C_2,
\end{equation}
where $C_1, C_2>0$ depend on $\beta,$ $\beta_1,$ $\beta',$ $\beta'_1, $ $\beta'',$ $\beta''_1,$ $\beta''',$ $\alpha,$ and $M.$
Following the same argument in the previous part of this step, we find that there exists 
$K>0$ and $0<T_1\leq \tilde{T}$ such that, for all $t\in [0, T_1],$ we have 
\begin{equation}\label{gronwall4}
\left|D^-\frac{du_h}{dt}\right|_h+|D^-\Delta_{g_h}u_h|_h \leq K.
\end{equation}                                                      
Since $$\Delta_{g_h}u_h= D^+g_hD^+u_h+ g_hD^2u_h,$$
$$D^-\Delta_{g_h}u_h = D^2g_hD^-u_h+g_hD^3u_h+D^+g_hD^2u_h+D^-g_hD^-D^-u_h,$$
we deduce from (\ref{gronwall3}) and (\ref{gronwall4}) that sequences 
$\{\left|D^-\frac{du_h}{dt}\right|_h\}_h,$ $\{\left|\frac{du_h}{dt}\right|_h\}_h,$ $\{|D^2u_h|_h\}_h,$ and $\{|D^3u_h|_h\}_h$  are bounded
in $L^\infty(0,T_1).$ Thus the result follows from Lemma \ref{equiv_norms}.
\end{proof}
\subsubsection{Step 3}
We already proved, by Lemma (\ref{lem_born1}), that there exists $u\in L^\infty(0,T,H^1_{loc}(\mathbb{R}))$  and a subsequence
$\{u_h\}_h$ such that 
$$P_hD^-u_h \rightarrow \partial_xu\quad\text{in}\quad L^\infty(0,T,L^2(\mathbb{R}))\quad \text{weak star},$$
for all $T >0.$ In view of Lemma \ref{lem_born2}, there exist $v,w\in L^\infty(0,T_1,L^2(\mathbb{R}))$  and a subsequence $\{u_h\}_h$ such that 
\begin{equation}\left\{\begin{array}{lr}
P_hD^2u_h\rightarrow v\quad\text{in}\quad L^\infty(0,T_1,L^2(\mathbb{R}))\quad \text{weak star},&\\
P_hD^3u_h \rightarrow w\quad\text{in}\quad L^\infty(0,T_1,L^2(\mathbb{R}))\quad \text{weak star}.&\\
\end{array}\right.\end{equation}
Consequently, the sequence $\{\partial_xP_hD^-u_h\}_h$ converges to $\partial^2_xu$ in the sense of distributions.
On the other hand, $\partial_xP_hD^-u_h = Q_hD^2u_h,$ and the two sequences $\{Q_hD^2u_h\}_h$ and $\{P_hD^2u_h\}_h$ converge to the same limit
in $L^\infty(0,T_1,L^2(\mathbb{R}))$ weak star (Lemma \ref{lem_cv_l2f}). It follows that 
$\partial^2_xu = v \in L^\infty(0,T_1,L^2(\mathbb{R})),$ hence $\{P_hD^2u_h\}_h$ converges to
$\partial^2_xu$ in $L^\infty(0,T_1,L^2(\mathbb{R}))$ weak star. A similar argument shows that
$\partial^3_xu \in L^\infty(0,T_1,L^2(\mathbb{R}))$ and thus the proof is completed.

\subsection{Proof of Theorem \ref{th_un}}
First, we establish the two following lemmas 
\begin{lemma}\label{Lem_reg_1}
Let $g\in W^{1,\infty}(\R^+,\mathbb{R})$ be such that $g\geq\alpha$ for some $\alpha>0$.
Let $T>0$ and $u: [0,T]\times\mathbb{R}\rightarrow S^2$ be a solution to (\ref{LIA}) such that 
$\partial_x u\in L^\infty(0,T,H^1(\mathbb{R})).$ Then there exist  $C_1, C_2 >0$ depending on $g$ and
 $\|\partial_xu(0,.)\|_{H^1(\mathbb{R})}$ such that for almost every $t\in ]0,T[$ we have 
\begin{equation}\label{gronwall01}
\|\partial_tu\|^2_{L^2(\mathbb{R})}+\|\Delta_gu\|^2_{L^2(\mathbb{R})}\leq
C_1+C_2\int_0^t\left(\|\partial_t u(\tau)\|^2_{L^2(\mathbb{R})}+\|\Delta_gu(\tau)\|^2_{L^2(\mathbb{R})}\right)d\tau.
\end{equation}
\end{lemma}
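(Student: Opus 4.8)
The plan is to prove \eqref{gronwall01} by the energy method, as the continuous counterpart of the a priori estimate of Step~1 in the proof of Theorem~\ref{th_ex_npf}. Writing \eqref{LIA} in the form $\partial_t u = u\wedge\Delta_g u$ with $\Delta_g u=\partial_x(g\partial_x u)$, I would first differentiate in time and apply the Leibniz rule for $\Delta_g$ to the product $u\wedge\Delta_g u$, using the pointwise constraint $|u|=1$ to reach the continuous analogue of \eqref{deriv7}:
\begin{equation*}
\partial_t^2 u + \Delta_g^2 u = (u\cdot\Delta_g u)\,\Delta_g u - |\Delta_g u|^2\, u + (u\cdot\Delta_g^2 u)\, u + u\wedge\Delta_{g_t}u + E,
\end{equation*}
where $\Delta_{g_t}u=\partial_x(\partial_t g\,\partial_x u)$ and $E$ collects the lower-order contributions, which are products of $\partial_x u$, $\partial_x^2 u$ and the derivatives of $g$.

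Taking the $L^2(\mathbb{R})$ inner product of this identity with $\partial_t u$ is the decisive step. The biharmonic term cancels: integrating by parts twice gives $\int_{\mathbb{R}}\Delta_g^2 u\cdot\partial_t u = \int_{\mathbb{R}}\Delta_g u\cdot\Delta_g\partial_t u$, and substituting $\Delta_g\partial_t u = \partial_t\Delta_g u-\Delta_{g_t}u$ reconstructs $\tfrac12\tfrac{d}{dt}\|\Delta_g u\|_{L^2}^2$ up to the commutator $\int_{\mathbb{R}}\Delta_g u\cdot\Delta_{g_t}u$; simultaneously the term $(u\cdot\Delta_g^2 u)\,u$ is annihilated against $\partial_t u$ because $u\cdot\partial_t u=0$. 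After these cancellations one is left with $\tfrac12\tfrac{d}{dt}\bigl(\|\partial_t u\|_{L^2}^2+\|\Delta_g u\|_{L^2}^2\bigr)$ on the left and only terms of order at most two in the spatial derivatives of $u$ on the right. These I would bound with Hölder's inequality and the structural identities $u\cdot\partial_t u=0$ and $u\cdot\Delta_g u=-g|\partial_x u|^2$ (the continuous form of the computation preceding \eqref{derivu_h2}), so that each cubic contribution is dominated by $\|\partial_x u\|_{L^\infty}^2\,\|\Delta_g u\|_{L^2}\,\|\partial_t u\|_{L^2}$ and similar products.

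The $L^\infty$ norm is treated by the one–dimensional Sobolev embedding $\|\partial_x u\|_{L^\infty}\le C\|\partial_x u\|_{H^1}$, the continuous analogue of Corollary~\ref{Sobolev_discret}, while $\|\partial_x^2 u\|_{L^2}$ is tied to the energy through the elliptic bound $\|\partial_x^2 u\|_{L^2}\le\tfrac1\alpha\bigl(\|\Delta_g u\|_{L^2}+\|\partial_x g\|_{L^\infty}\|\partial_x u\|_{L^2}\bigr)$, which parallels \eqref{major4}. To keep $\|\partial_x u\|_{L^2}$ controlled by the data alone I would use the conservation-type law $\tfrac{d}{dt}\int_{\mathbb{R}} g|\partial_x u|^2 = \int_{\mathbb{R}}\partial_t g\,|\partial_x u|^2$, obtained by pairing \eqref{LIA} with $\Delta_g u$ and invoking $\Delta_g u\cdot(u\wedge\Delta_g u)=0$; Grönwall then bounds $\|\partial_x u(t)\|_{L^2}$ in terms of $\|\partial_x u(0,\cdot)\|_{L^2}$ and $g$. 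Since by hypothesis $\partial_x u\in L^\infty(0,T,H^1(\mathbb{R}))$, the quantity $\|\partial_x u\|_{L^\infty}$ is bounded uniformly on $[0,T]$, and it is precisely this a priori bound that linearizes the estimate, turning the otherwise quadratic dependence on $E:=\|\partial_t u\|_{L^2}^2+\|\Delta_g u\|_{L^2}^2$ into $\tfrac{d}{dt}E\le C_2 E + C_1'$. Integrating from $0$ to $t$ and bounding $E(0)$ by $\|\partial_x u(0,\cdot)\|_{H^1}$ (via $\|\partial_t u(0,\cdot)\|_{L^2}\le\|\Delta_g u(0,\cdot)\|_{L^2}\le\|g\|_{L^\infty}\|\partial_x u(0,\cdot)\|_{H^1}$) then yields \eqref{gronwall01}.

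The main obstacle is the algebra of the first two steps: reproducing in the continuous setting the delicate cancellation of the fourth-order term $\Delta_g^2 u$ and the systematic use of $|u|=1$ to reduce every surviving term to quadratic order in $\|\Delta_g u\|_{L^2}$ and $\|\partial_t u\|_{L^2}$, exactly as in the discrete manipulations \eqref{delta_gh}--\eqref{derivu_h2}. Once that reduction is in place, the estimates are routine applications of Hölder, Sobolev and the elliptic bound relating $\|\partial_x^2 u\|_{L^2}$ to $\|\Delta_g u\|_{L^2}$.
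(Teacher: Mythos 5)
Your structural reduction (differentiating in time, pairing with $\partial_t u$, cancelling the biharmonic term through the commutator $\Delta_g\partial_t u=\partial_t\Delta_g u-\Delta_{\partial_t g}u$, and annihilating the components parallel to $u$ via $u\cdot\partial_t u=0$) is exactly the computation the paper performs on the way to \eqref{Iu}. The gap is in how you close the estimate. You bound the surviving cubic terms by $\|\partial_x u\|_{L^\infty}^2\|\Delta_g u\|_{L^2}\|\partial_t u\|_{L^2}$ and then treat $\|\partial_x u\|_{L^\infty}$ as a constant because $\partial_x u\in L^\infty(0,T,H^1(\mathbb{R}))$ by hypothesis. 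But the lemma asserts that $C_1,C_2$ depend only on $g$ and $\|\partial_x u(0,\cdot)\|_{H^1(\mathbb{R})}$; with your argument they depend on $\sup_{t\in[0,T]}\|\partial_x u(t,\cdot)\|_{H^1(\mathbb{R})}$, i.e.\ on the solution over the whole time interval. This is not cosmetic: the lemma feeds Corollary \ref{cor_reg_1}, whose entire content is that $\|\partial^2_x u(t,\cdot)\|^2_{L^2}$ is bounded by $D_1e^{D_2t}$ with \emph{data-dependent} constants (with your constants the corollary becomes vacuous, since $D_1$ would already majorize the quantity being estimated), and Corollary \ref{cor_reg_1} is in turn what allows the constants in Theorem \ref{th_un} to depend only on $g$, $T$ and the initial data, as claimed there. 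The hypothesis $\partial_x u\in L^\infty(0,T,H^1)$ may only be used qualitatively, to justify the manipulations, not quantitatively inside the constants.

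The paper closes the estimate by a mechanism you are missing. Using $u\cdot\Delta_g u=-g|\partial_x u|^2$, the dangerous term $4\int_{\mathbb{R}} g(u\cdot\partial_x\Delta_g u)(\partial_x u\cdot\partial_t u)\,dx$ is integrated by parts so that its leading part is the exact time derivative $\tfrac34\tfrac{d}{dt}\int_{\mathbb{R}} g^2|\partial_x u|^4dx$, which is absorbed into the modified energy $I(u)=\|\partial_t u\|^2_{L^2}+\|\Delta_g u\|^2_{L^2}-\tfrac32\int_{\mathbb{R}} g^2|\partial_x u|^4dx$; the remainder $J(u)$ carries a factor $\partial_x g$ or $\partial_t g$ and is quartic in $\partial_x u$ or cubic in $\partial_x u$ with a single $\partial_t u$. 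Each such term is then estimated by the Gagliardo--Nirenberg inequalities \eqref{NBRG}, which are \emph{linear} in $\|\partial^2_x u\|_{L^2}$, e.g.\ $\|\partial_x u\|^3_{L^6}\le K_6^3\|\partial_x u\|^2_{L^2}\|\partial^2_x u\|_{L^2}$, while the lower-order factor $\|\partial_x u\|_{L^2}$ is controlled purely by the data through the conservation law \eqref{inegG}; combined with the elliptic bound \eqref{Iu2} and Young's inequality this yields a Gr\"onwall inequality whose constants involve only $g$ and $\|\partial_x u(0,\cdot)\|_{H^1}$. Note that you cannot repair your version simply by replacing the Sobolev embedding with Agmon's inequality $\|\partial_x u\|^2_{L^\infty}\le C\|\partial_x u\|_{L^2}\|\partial^2_x u\|_{L^2}$: that produces a term of order $\bigl(\|\partial_t u\|^2_{L^2}+\|\Delta_g u\|^2_{L^2}\bigr)^{3/2}$, which is superlinear and does not give \eqref{gronwall01}. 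The perfect-derivative/modified-energy step, which removes the term that is genuinely quadratic in the second derivatives, is what makes the data-only dependence possible.
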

\begin{proof}
Taking the $L^2-$scalar product in (\ref{LIA}) with $\Delta_gu$ and integrating by parts, we obtain
$$\frac{d}{dt}\int_{\mathbb{R}}g(x)|\partial_x u|^2dx=\int_{\mathbb{R}}\partial_tg(x)|\partial_x u|^2dx,$$
which gives 
\begin{equation}\label{inegG}
 \|\partial_xu(t,.)\|_{L^2(\mathbb{R})}\leq 
\sqrt{\frac{\|g\|_{L^\infty}}{\alpha}}\|\partial_xu(0,.)\|_{L^2(\mathbb{R})}\exp\left(\frac{\|\partial_tg\|_{L^\infty}}{2\alpha}\right), 
\forall t\in[0,T].
\end{equation}
Since $2u\cdot\partial_xu=\partial_x|u|^2=0,$ and by deriving (\ref{LIA}) with respect to $t$, we obtain
\begin{eqnarray}\label{deriv1}
\partial^2_tu&=& (u\wedge \Delta_g u)\wedge \Delta_g u+u\wedge \Delta_g(u\wedge \Delta_g u)+u\wedge \Delta_{\partial_tg} u\notag\\
&=& (u\cdot\Delta_g u)\Delta_g u - |\Delta_g u|^2u+u\wedge (\Delta_g u\wedge\Delta_g u+
2g\partial_xu\wedge\partial_x\Delta_gu+u\wedge\Delta_g^2u)+u\wedge \Delta_{\partial_tg}\notag\\
&=& (u\cdot\Delta_g u)\Delta_g u - |\Delta_g u|^2u+2g(u\cdot\partial_x\Delta_g u)\partial_xu
+ (u\cdot\Delta_g^2u)u -\Delta_g^2u+u\wedge \Delta_{\partial_tg}u.
\end{eqnarray}
It is clear that $\partial_tu\cdot u=0,$ then taking the $L^2-$scalar product in (\ref{deriv1}) with $\partial_tu$, we get  
\begin{eqnarray*}
 \frac{d}{dt}\int_{\mathbb{R}}(|\partial_tu|^2+|\Delta_gu|^2)dx &=&
4\int_{\mathbb{R}}g(u\cdot\partial_x\Delta_g u)(\partial_x u\cdot\partial_tu)dx\\
&& +2\int_{\mathbb{R}}(u\wedge \Delta_{\partial_tg}u)\cdot (u\wedge \Delta_gu)dx\\
&& +2\int_{\mathbb{R}}\Delta_{\partial_tg}u\cdot \Delta_gudx.
\end{eqnarray*}

Furthermore, we have
\begin{eqnarray}\label{deriv2}
u\cdot\partial_x\Delta_g u&=& \partial_x(u\cdot\Delta_g u)-\partial_x u\cdot\Delta_g u\notag\\
&=& -\frac{3}{2}\partial_x(g|\partial_xu|^2)-\frac{1}{2}\partial_xg|\partial_x u|^2,
\end{eqnarray}
and 
\begin{eqnarray}\label{deriv2A}
(u\wedge \Delta_{\partial_tg}u)\cdot (u\wedge \Delta_gu) &=& \Delta_{\partial_tg}u\cdot \Delta_gu-(u\cdot \Delta_{\partial_tg}u)(u\cdot \Delta_gu)\notag\\
&=& \Delta_{\partial_tg}u\cdot \Delta_gu-\frac{1}{2}\partial_tg^2|\partial_x u|^4.
\end{eqnarray}
Then, integrating by parts, we get 
\begin{eqnarray}\label{Iu}
\frac{1}{2}\frac{d}{dt}\int_{\mathbb{R}}(|\partial_tu|^2+|\Delta_g|^2)dx&=&
\frac{3}{4}\frac{d}{dt}\int_{\mathbb{R}}g^2|\partial_xu|^4dx-\int_{\mathbb{R}}g\partial_xg|\partial_xu|^2
(\partial_x u\cdot\partial_t u)dx\notag\\
&& -\frac{5}{4}\int_{\mathbb{R}}\partial_tg^2|\partial_xu|^4dx+2\int_{\mathbb{R}}\Delta_{\partial_tg}u\cdot \Delta_gudx
\end{eqnarray}
We denote $$I(u)=\|\partial_t u\|^2_{L^2(\mathbb{R})}+\|\Delta_g u\|^2_{L^2(\mathbb{R})}-\frac{3}{2}\int_{\mathbb{R}}g^2|\partial_x u|^4dx,$$
$$J(u)= -\int_{\mathbb{R}}g\partial_xg|\partial_xu|^2(\partial_x u\cdot\partial_t u)dx
 -\frac{5}{4}\int_{\mathbb{R}}\partial_tg^2|\partial_xu|^4dx+2\int_{\mathbb{R}}\Delta_{\partial_tg}u\cdot \Delta_gudx.$$
Relation (\ref{Iu}) can be rewritten as
\begin{equation}\label{Iu1}
\|\partial_t u\|^2_{L^2(\mathbb{R})}+\|\Delta_g u\|^2_{L^2(\mathbb{R})}=I(u(0,.))+\frac{3}{2}\int_{\mathbb{R}}g^2|\partial_x u|^4dx +2\int_0^tJ(u(\tau))d\tau.
\end{equation}
Then applying Gagliardo-Nirenberg inequalities on $\partial_x u,$ we get 
\begin{equation}\label{NBRG}\left\{\begin{array}{lr}
\|\partial_x u\|_{L^6(\mathbb{R})}\leq K_6\|\partial_xu\|^{\frac{2}{3}}_{L^2(\mathbb{R})}
\|\partial^2_x u\|^{\frac{1}{3}}_{L^2(\mathbb{R})},&\\
\|\partial_x u\|_{L^4(\mathbb{R})}\leq K_4\|\partial_xu\|^{\frac{3}{4}}_{L^2(\mathbb{R})}
\|\partial^2_xu\|^{\frac{1}{4}}_{L^2(\mathbb{R})},&\\
\end{array}
\right.
\end{equation}
with $K_6,K_4>0.$ On the other hand, we have 
\begin{equation}\label{Iu2}
 \|g\partial^2_x u\|^2_{L^2(\mathbb{R})}\leq 2\|\Delta_gu\|^2_{L^2(\mathbb{R})}+
2\|\partial_xg\partial_x u\|^2_{L^2(\mathbb{R})}.
\end{equation}
To find a suitable estimate for $I(u(0,.))$, we use the relation 
$$|\partial _tu|^2=|u\wedge\Delta_g u|^2=|\Delta_g u|^2-g^2|\partial_x u|^4,$$
which implies that  
\begin{eqnarray}\label{Iu3}
I(u(0,.))&=& 2\|\Delta_g u(0,.)\|^2_{L^2(\mathbb{R})}-\frac{5}{2}\int_{\mathbb{R}}g^2|\partial_xu(0,.)|^4dx\notag\\
&\leq& 2\|\Delta_g u(0,.)\|^2_{L^2(\mathbb{R})}+\frac{5}{2}K_4^4\|\partial_xu(0,.)\|^3_{L^2(\mathbb{R})}
\|\partial^2_xu(0,.)\|_{L^2(\mathbb{R})}.
\end{eqnarray}
Thus, inequalities (\ref{inegG}), (\ref{NBRG}), (\ref{Iu2}) and (\ref{Iu3}) together with $g\in W^{1,\infty}(\R^+, \R)$ allow, by using 
H\"{o}lder inequality, to upper-bound the right hand side of (\ref{Iu1}) by 
$$C_1+C_2\int_0^t\left(\|\partial_t u(\tau)\|^2_{L^2(\mathbb{R})}+\|\Delta_gu(\tau)\|^2_{L^2(\mathbb{R})}\right)d\tau,$$
where the two constants $C_1$ and $C_2$  depend on $g$ and $\|\partial_xu(0,.)\|_{H^1(\R)}.$

\end{proof}
\begin{corollary}\label{cor_reg_1}
Under the assumptions of lemma \ref{Lem_reg_1}, we have for almost every $t\in ]0, T[$ 
$$\|\partial^2_x u(t,.)\|^2_{L^2(\mathbb{R})}\leq D_1e^{D_2t},$$
where $D_1$ and $D_2$ are two  positive constants depending on $g$ 
and $\|\partial_xu(0,.)\|_{H^1(\mathbb{R})}.$
\end{corollary}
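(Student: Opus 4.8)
The plan is to feed the integral inequality (\ref{gronwall01}) established in Lemma \ref{Lem_reg_1} into the standard integral form of Grönwall's lemma, and then to convert the resulting control on $\|\Delta_g u\|_{L^2}$ into control on $\|\partial^2_x u\|_{L^2}$ by exploiting the uniform lower bound $g\geq\alpha$.

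First I would set $\Phi(t)=\|\partial_t u(t,.)\|^2_{L^2(\mathbb{R})}+\|\Delta_g u(t,.)\|^2_{L^2(\mathbb{R})}$. Inequality (\ref{gronwall01}) then reads $\Phi(t)\leq C_1+C_2\int_0^t\Phi(\tau)\,d\tau$ for almost every $t\in\,]0,T[$, so the integral Grönwall inequality yields immediately $\Phi(t)\leq C_1 e^{C_2 t}$, and in particular $\|\Delta_g u(t,.)\|^2_{L^2(\mathbb{R})}\leq C_1 e^{C_2 t}$ with $C_1,C_2$ depending only on $g$ and $\|\partial_x u(0,.)\|_{H^1(\mathbb{R})}$.

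Next I would recover $\partial^2_x u$ from $\Delta_g u=\partial_x g\,\partial_x u+g\,\partial^2_x u$. Since $g\geq\alpha>0$, the estimate (\ref{Iu2}) gives
$$\alpha^2\|\partial^2_x u\|^2_{L^2(\mathbb{R})}\leq \|g\,\partial^2_x u\|^2_{L^2(\mathbb{R})}\leq 2\|\Delta_g u\|^2_{L^2(\mathbb{R})}+2\|\partial_x g\|^2_{L^\infty}\|\partial_x u\|^2_{L^2(\mathbb{R})}.$$
The first term on the right is already controlled by $2C_1 e^{C_2 t}$ from the previous step. For the lower-order term I would invoke (\ref{inegG}), which bounds $\|\partial_x u(t,.)\|^2_{L^2(\mathbb{R})}$ by a constant times an exponential in $t$ (with rate governed by $\|\partial_t g\|_{L^\infty}/\alpha$), both depending only on $g$ and $\|\partial_x u(0,.)\|_{L^2(\mathbb{R})}$. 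Collecting the two contributions, dividing by $\alpha^2$, and taking $D_2$ to be the larger of $C_2$ and the exponential rate from (\ref{inegG}), with $D_1$ the corresponding sum of prefactors, produces the claimed bound $\|\partial^2_x u(t,.)\|^2_{L^2(\mathbb{R})}\leq D_1 e^{D_2 t}$.

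There is no genuine obstacle here, as the statement is a direct corollary: the only point requiring a little care is that $\Delta_g u$ controls $\partial^2_x u$ only after absorbing the first-order term $\partial_x g\,\partial_x u$, and the a priori bound (\ref{inegG}) on $\|\partial_x u\|_{L^2}$ is exactly what is needed to dispose of it. All constants inherited from Lemma \ref{Lem_reg_1} and from (\ref{inegG}) depend only on $g$ and $\|\partial_x u(0,.)\|_{H^1(\mathbb{R})}$, as required.
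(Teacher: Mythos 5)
Your proposal is correct and follows essentially the same route as the paper: Gr\"{o}nwall's lemma applied to the integral inequality (\ref{gronwall01}), followed by recovery of $\partial^2_x u$ from $\Delta_g u$ using $g\geq\alpha$ together with (\ref{Iu2}) and the lower-order bound (\ref{inegG}). If anything, your write-up is more careful than the paper's, which applies Gr\"{o}nwall directly to $\psi(t)=\|\partial_t u(t)\|^2_{L^2(\mathbb{R})}+\|\partial^2_x u(t)\|^2_{L^2(\mathbb{R})}$ and leaves implicit exactly the conversion between $\|\Delta_g u\|_{L^2(\mathbb{R})}$ and $\|\partial^2_x u\|_{L^2(\mathbb{R})}$ (absorbing the term $\partial_x g\,\partial_x u$ via (\ref{inegG})) that you spell out.
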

\begin{proof}
Let $$\psi(t)=\|\partial_t u(t)\|^2_{L^2(\mathbb{R})}+\|\partial^2_x u(t)\|^2_{L^2(\mathbb{R})}.$$
Inequality (\ref{gronwall01}) implies that
$$\psi(t) \leq C_1+C_2\int_0^t\psi(\tau)d\tau,$$
then conclusion follows from Gr\"{o}nwall lemma.
\end{proof}

\begin{lemma}\label{Lem_reg_2}
Let $g\in W^{1,\infty}(\R^+, W^{3,\infty}(\mathbb{R}))$ be such that $g\geq\alpha$ for some $\alpha>0$. 
Let $T>0$ and $u: [0,T]\times\mathbb{R}\rightarrow S^2$ be a solution to (\ref{LIA}) such that 
$\partial_x u\in L^\infty(0,T,H^2(\mathbb{R})).$ Then there exist $C_1, C_2 >0$ depending on $g$ and 
$\|\partial_xu(0,.)\|_{H^2(\mathbb{R})}$ such that for almost every $t\in ]0,T[$ we have 
$$\|\partial_t\partial_x u\|^2_{L^2(\mathbb{R})}+\|\partial^3_x u\|^2_{L^2(\mathbb{R})}\leq
C_1+C_2\int_0^t\left(\|\partial_t\partial_x u(\tau)\|^2_{L^2(\mathbb{R})}+\|\partial^3_x u(\tau)\|^2_{L^2(\mathbb{R})}\right)d\tau.$$
\end{lemma}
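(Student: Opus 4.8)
The plan is to repeat the energy argument of Lemma~\ref{Lem_reg_1} one spatial derivative higher, now working with the weighted quantity
$$E(t)=\int_{\mathbb{R}}g\left(|\partial_x\partial_t u|^2+|\partial_x\Delta_g u|^2\right)dx,$$
which, since $g\geq\alpha$ and $\partial_x\Delta_g u=g\,\partial_x^3 u+(\text{terms in }\partial_x g,\partial_x^2 g,\partial_x u,\partial_x^2 u)$, is comparable to $\|\partial_t\partial_x u\|_{L^2(\mathbb{R})}^2+\|\partial_x^3 u\|_{L^2(\mathbb{R})}^2$ modulo bounded lower–order quantities. The whole computation is the continuous counterpart of the discrete Step~1 leading to \eqref{deriv11} and of the estimates \eqref{estimA}--\eqref{estim7}; I would import that algebra rather than redo it symbol by symbol. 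Before starting I record that, by Lemma~\ref{Lem_reg_1} and Corollary~\ref{cor_reg_1}, the hypothesis $\partial_x u\in L^\infty(0,T,H^2(\mathbb{R}))$ already furnishes a bound on $\|\partial_x u\|_{L^\infty(0,T,H^1(\mathbb{R}))}$ depending only on $g$ and $\|\partial_x u(0,\cdot)\|_{H^1(\mathbb{R})}$; this is the only channel through which the initial datum enters the constants.

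The first step is to derive the evolution equation for $E$. Starting from the differentiated identity \eqref{deriv1}, I rewrite it as $\partial_t^2 u+\Delta_g^2 u=F$ and transform the top–order cubic contribution $(u\cdot\Delta_g^2 u)u$ exactly as in the discrete passage \eqref{deriv9}--\eqref{deriv11}, using $|u|=1$, $u\cdot\partial_x u=0$ and the pointwise identity
$$u\cdot\Delta_g^2 u=-\Delta_g\!\left(g|\partial_x u|^2\right)-|\Delta_g u|^2-2g\,\partial_x\Delta_g u\cdot\partial_x u.$$
This puts $F$ into a form where the highest derivatives occur only inside $\Delta_g$ applied to quadratic expressions in $\partial_x u$ or inside exact $x$–divergences. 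I then apply $\partial_x$ to $\partial_t^2 u+\Delta_g^2 u=F$ and take the $L^2$ scalar product with $g\,\partial_x\partial_t u$. Using $u\cdot\partial_t u=0$ and $\Delta_g u\cdot\partial_t u=0$ (the latter because $\partial_t u=u\wedge\Delta_g u$), together with their spatial derivatives, the relation $\partial_t\Delta_g u=\Delta_g\partial_t u+\Delta_{\partial_t g}u$, and two integrations by parts, the two genuinely top–order terms assemble into $\tfrac12\frac{d}{dt}E$ plus a remainder carrying a factor $\|\partial_t g\|_{L^\infty}$ or $\|g\|_{W^{3,\infty}}$ and at most three spatial derivatives of $u$.

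It then remains to estimate the right–hand side. Every term is a product of factors drawn from $\partial_x u,\partial_x^2 u$ (measured in $L^2$, $L^4$, $L^6$, $L^\infty$) with at most one top–order factor $\partial_x\partial_t u$ or $\partial_x\Delta_g u$ (equivalently $\partial_x^3 u$). Invoking the Gagliardo–Nirenberg inequalities \eqref{NBRG}, together with $\|\partial_x^2 u\|_{L^\infty}^2\lesssim\|\partial_x^2 u\|_{L^2}\|\partial_x^3 u\|_{L^2}$, I would trade every $L^p$– and $L^\infty$–norm of a lower derivative for powers of $\|\partial_x u\|_{L^2}$ and $\|\partial_x^2 u\|_{L^2}$, both controlled by the initial datum via Corollary~\ref{cor_reg_1}; the residual top–order factor always appears with total power two. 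After Young's inequality the right–hand side is dominated by $C_1+C_2\,E(t)$, with $C_1,C_2$ depending only on $g$ and $\|\partial_x u(0,\cdot)\|_{H^2(\mathbb{R})}$. Integrating this differential inequality in time and converting $E$ back to $\|\partial_t\partial_x u\|_{L^2}^2+\|\partial_x^3 u\|_{L^2}^2$ (using $g\geq\alpha$ and the expression of $\partial_x\Delta_g u$ above) yields the claimed inequality, the initial energy $E(0)$ being absorbed into $C_1$.

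The main obstacle is the bookkeeping for the top–order term $\partial_x\Delta_g^2 u$: one must verify that, after pairing with $g\,\partial_x\partial_t u$ and integrating by parts, the contributions carrying four spatial derivatives either cancel or combine into the exact time derivative $\tfrac12\frac{d}{dt}\int_{\mathbb{R}}g|\partial_x\Delta_g u|^2\,dx$, so that no term losing three derivatives survives. Because $g$ is variable, the commutators $[\partial_x,\Delta_g]$ and $[\partial_t,\Delta_g]$ generate additional terms in $\partial_x g,\partial_x^2 g,\partial_x^3 g$ and $\partial_t g$; the hypothesis $g\in W^{1,\infty}(\mathbb{R}^+,W^{3,\infty}(\mathbb{R}))$ is precisely what guarantees that these commutator terms are of lower order and uniformly bounded, so that the interpolation argument of the previous paragraph applies to them as well.
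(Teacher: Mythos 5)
Your strategy coincides with the paper's own: the same weighted energy $\int_{\mathbb{R}}g\left(|\partial_t\partial_x u|^2+|\partial_x\Delta_g u|^2\right)dx$, the same reduction of $\partial_t^2u+\Delta_g^2u$ via \eqref{deriv3} (your pointwise identity for $u\cdot\Delta_g^2u$ is exactly the paper's, since $u\cdot\Delta_gu=-g|\partial_xu|^2$), the same multiplier $g\,\partial_x\partial_tu$ after applying $\partial_x$, and the same Gagliardo--Nirenberg/Gr\"onwall closing. However, there is a genuine gap in your estimation step, and it sits precisely where the paper's proof does its real work.

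Your claim that, once the two top-order terms have been assembled into $\tfrac12\frac{d}{dt}E$, every remaining term is a product of factors in $\partial_xu,\partial_x^2u$ with at most one top-order factor $\partial_x\partial_tu$ or $\partial_x^3u$, is false. Applying $\partial_x$ to the cubic term $\Delta_g\bigl(g|\partial_xu|^2u\bigr)$ appearing in \eqref{deriv4} produces, among others, the contribution
$$J_1=-2\int_{\mathbb{R}}g^3\,\partial_x^3\bigl(|\partial_xu|^2\bigr)\,u\cdot\partial_t\partial_xu\,dx,$$
which contains $\partial_x^4u\cdot\partial_xu$: four spatial derivatives on $u$, not controlled by $E$, so no direct application of H\"older, Gagliardo--Nirenberg and Young can dominate it by $C_1+C_2E(t)$. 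Nor does this term ``cancel or combine into $\tfrac12\frac{d}{dt}\int_{\mathbb{R}} g|\partial_x\Delta_gu|^2dx$'' --- that escape, proposed in your last paragraph, is available only for the linear term $\partial_x\Delta_g^2u$, where you (mis)locate the difficulty. The paper's treatment of $J_1$ is the crux of its proof: integrate by parts once more to get $J_1=2\int_{\mathbb{R}}\partial_x^2(|\partial_xu|^2)\,\partial_x\bigl(g^3u\cdot\partial_t\partial_xu\bigr)dx$, and then eliminate the would-be disastrous factor $u\cdot\partial_t\partial_x^2u$ using the constraint $|u|=1$: since $u\cdot\partial_tu\equiv0$, one has $\partial_x(u\cdot\partial_t\partial_xu)=-\partial_xu\cdot\partial_t\partial_xu-\partial_x^2u\cdot\partial_tu$, after which every factor is of the form you describe and your interpolation machinery closes the estimate. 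You do invoke $u\cdot\partial_tu=0$ ``together with their spatial derivatives,'' but only while assembling the energy; without explicitly routing the cubic term through this extra integration by parts plus the constraint identity, the scheme as you wrote it stalls on $J_1$.
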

\begin{proof}
Since 
\begin{equation} \label{deriv3}
u\cdot\Delta_g^2u=\Delta_g(u\cdot\Delta_gu)-2g\partial_x u\cdot\partial_x\Delta_g u-|\Delta_g u|^2,
\end{equation}
we get by combining (\ref{deriv1}), (\ref{deriv2}) and (\ref{deriv3}) 
\begin{eqnarray}\label{deriv4}
\partial^2_tu+\Delta_g^2u&=& u\wedge\Delta_{\partial_tg} u-\Delta_g(g|\partial_x u|^2)-g|\partial_x u|^2\Delta_gu
-2\partial_x(g|\partial_x u|^2)\partial_x u \notag\\
&& -2g(\partial_x u\cdot\Delta_g u)\partial_x u-2g(\partial _xu\cdot\partial_x\Delta_g u)u
-2|\Delta_g u|^2\notag\\
&=& u\wedge\Delta_{\partial_tg} -\Delta_g(g|\partial_x u|^2u)-2\partial_x(g(\partial_x u\cdot\Delta_g u)u)\notag\\
&=& u\wedge\Delta_{\partial_tg} -2\Delta_g\left(|\partial_x u|^2u\right)
+\partial_x\left(|\partial_x u|^2(g\partial_x u-\partial_xgu)\right).
\end{eqnarray}
Deriving (\ref{deriv4}) with respect to $x$ and taking the $L^2-$scalar product with $g\partial_t\partial_x u,$ we get  
by integrating by parts
\begin{eqnarray}\label{Int1}
\frac{1}{2}\frac{d}{dt}\int_{\mathbb{R}}g\left(|\partial_t\partial_x u|^2+|\partial_x\Delta_gu|^2\right)dx &=&
-2\int_{\mathbb{R}}g\partial_x\Delta_g\left(|\partial_x u|^2u\right)\cdot\partial_t\partial_x udx\notag\\
&&+\int_{\mathbb{R}}g\partial^2_x\left(|\partial_x u|^2(g\partial_x u-g'u)\right)\cdot\partial_t\partial_x udx\notag\\
&&+\int_{\mathbb{R}}g\partial_x\left(u\wedge\Delta_{\partial_tg}u\right)\cdot\partial_t\partial_x udx\notag\\
&&+\int_{\mathbb{R}}g\partial_x\Delta_{\partial_tg}u\cdot\partial_x\partial_t\Delta_{g}udx 
+\int_{\mathbb{R}}\partial_tg|\partial_x\Delta_gu|^2dx.
\end{eqnarray}
We estimate the $L^2$ norm of the right-hand side  of (\ref{deriv4}) by applying the chain rule on the operators $\partial_x\Delta_g$
and $\partial^2_x$. All the terms of the right hand side  of (\ref{Int1}) except for  
$$J_1=-2\int_{\mathbb{R}}g^3\partial^3_x(|\partial_x u|^2)u\cdot\partial_t\partial_x udx,$$
can be upper-bounded  by $C\left(\|\partial_t\partial_x u(\tau)\|^2_{L^2(\mathbb{R})}+\|\partial^3_x u(\tau)\|^2_{L^2(\mathbb{R})}\right).$ 
 To estimate $J_1,$ we integrate by parts.  Hence we get 
$$J_1=2\int_{\mathbb{R}}\partial^2_x(|\partial_x u|^2)\partial_x(g^3u\cdot\partial_t\partial_x u)dx,$$
then we develop
$$\partial_x(u\cdot\partial_t\partial_x u)=\partial_x u\cdot\partial_t\partial_x u
+u\cdot\partial_t\partial^2_x u=\partial_x u\cdot\partial_t\partial_x u
+u\cdot\partial_t\partial^2_x u-\partial^2_x(u\cdot\partial_t u)
= -\partial_x u\cdot\partial_t\partial_x u-\partial^2_x u\cdot\partial_t u.$$
Thus we get 
$$J_1=6\int_{\mathbb{R}}g'g^2\partial^2_x(|\partial_x u|^2)u\cdot\partial_t\partial_x udx
-2\int_{\mathbb{R}}g^3\partial^2_x(|\partial_x u|^2)
(\partial_x u\cdot\partial_t\partial_x u+\partial^2_x u\cdot\partial_tu)dx,$$
and the conclusion holds from H\"{o}lder inequality and Sobolev embedding.
\end{proof}
\begin{corollary}\label{cor_reg_2}
Under the assumptions of Lemma \ref{Lem_reg_2}, we have for almost every  $t\in ]0,T[$ 
$$\|\partial^3_x u(t,.)\|^2_{L^2(\mathbb{R})}\leq D_1e^{D_2t},$$
where $D_1$ and $D_2$ are two positive constants depending on $g$ and $\|\partial_x u(0,.)\|_{H^2(\mathbb{R})}.$
\end{corollary}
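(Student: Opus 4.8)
The plan is to mirror exactly the argument already used for Corollary \ref{cor_reg_1}, since Lemma \ref{Lem_reg_2} has been formulated so as to deliver an integral inequality of Gr\"{o}nwall type directly in the desired variables. First I would introduce the auxiliary quantity
$$\psi(t)=\|\partial_t\partial_x u(t,.)\|^2_{L^2(\mathbb{R})}+\|\partial^3_x u(t,.)\|^2_{L^2(\mathbb{R})},$$
which is finite for almost every $t$ thanks to the hypothesis $\partial_x u\in L^\infty(0,T,H^2(\mathbb{R}))$ together with equation (\ref{LIA}), the latter expressing $\partial_t u$ (hence $\partial_t\partial_x u$ after one more spatial differentiation) in terms of spatial derivatives of $u$ that are already controlled in $L^2(\mathbb{R})$. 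With this notation the conclusion of Lemma \ref{Lem_reg_2} reads precisely
$$\psi(t)\leq C_1+C_2\int_0^t\psi(\tau)\,d\tau\qquad\text{for a.e. }t\in\,]0,T[.$$

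The key step is then a direct application of the integral form of the Gr\"{o}nwall lemma to $\psi$, which yields $\psi(t)\leq C_1 e^{C_2 t}$ for almost every $t\in\,]0,T[$. Since both summands defining $\psi$ are nonnegative, one has $\|\partial^3_x u(t,.)\|^2_{L^2(\mathbb{R})}\leq\psi(t)$, so discarding the $\partial_t\partial_x u$ contribution gives the announced bound with $D_1=C_1$ and $D_2=C_2$; both constants inherit their dependence on $g$ and on $\|\partial_x u(0,.)\|_{H^2(\mathbb{R})}$ directly from Lemma \ref{Lem_reg_2}. I expect essentially no obstacle here: all the analytic effort---differentiating (\ref{LIA}) in time to obtain (\ref{deriv4}), reducing the highest-order term $J_1$ by the two successive integrations by parts that use $u\cdot\partial_t u=0$, and the Gagliardo--Nirenberg and Sobolev estimates controlling the lower-order remainders---has already been absorbed into the statement of Lemma \ref{Lem_reg_2}. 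Consequently the corollary is a one-line consequence of Gr\"{o}nwall's lemma once $\psi$ is named, exactly as Corollary \ref{cor_reg_1} followed from Lemma \ref{Lem_reg_1}.
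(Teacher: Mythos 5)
Your proposal is correct and coincides with the paper's own argument: the paper proves Corollary \ref{cor_reg_2} exactly as it proved Corollary \ref{cor_reg_1}, namely by applying the integral form of Gr\"{o}nwall's lemma to $\psi(t)=\|\partial_t\partial_x u(t,.)\|^2_{L^2(\mathbb{R})}+\|\partial^3_x u(t,.)\|^2_{L^2(\mathbb{R})}$, whose Gr\"{o}nwall-type bound is precisely the conclusion of Lemma \ref{Lem_reg_2}, and then discarding the nonnegative term $\|\partial_t\partial_x u\|^2_{L^2(\mathbb{R})}$. Your write-up is simply a more explicit version of the paper's one-line proof.
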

\begin{proof}
The proof is an immediate result of Gr\"{o}nwall lemma.
\end{proof}

\subsubsection{Proof of Theorem \ref{th_un}}
We denote $\omega= u-\tilde{u}$ and $\omega_0= u_0-\tilde{u}_0.$
In what follows, we prove that there exist $C_k>0,\quad k=1,..,5,$ depending on $g$ and the $H^2$ norm of $\frac{d\tilde{u}_0}{dx}$ and $\frac{du_0}{dx},$ 
such that for almost every $t\in ]0,T_1[$ we have
\begin{equation}\label{ineg_un1}
\|\omega\|^2_{H^1(\mathbb{R})}\leq C_1\|\omega_0\|^2_{H^1(\mathbb{R})}+C_2\int_0^t\|\omega(\tau)\|^2_{H^1(\mathbb{R})}d\tau,
\end{equation}
and
\begin{eqnarray}\label{ineg_un2}
\|\partial_t \omega\|^2_{L^2(\mathbb{R})}+\|\partial^2_x\omega\|^2_{L^2(\mathbb{R})}&\leq&
C_3\| \omega_0\|^2_{H^1(\mathbb{R})}
+C_4\left(\|\partial_t \omega|_{t=0}\|^2_{L^2(\mathbb{R})}+\|\partial^2_x\omega_0\|^2_{L^2(\mathbb{R})}\right)\notag\\
&& + C_5\int_0^t\left(\|\partial_t \omega(\tau)\|^2_{L^2(\mathbb{R})}+\|\partial^2_x\omega(\tau)\|^2_{L^2(\mathbb{R})}\right)d\tau.
\end{eqnarray} 
Applying (\ref{LIAg}) and (\ref{deriv4}) on $u$ and $\tilde{u}$ and subtracting, we get
\begin{equation}\label{deriv5}
\partial_t\omega=z\wedge\Delta_g\omega+\omega\wedge\Delta_gz,
\end{equation}
and
\begin{eqnarray}\label{deriv6}
\partial^2_t\omega+\Delta_g^2\omega&=& z\wedge\Delta_{\partial_tg}\omega+\omega\wedge\Delta_{\partial_tg}z
-2\Delta_g(gQ\omega)+
\partial_x\left(Q(g\partial_x\omega-\partial_xg\omega)\right)\notag\\
&& - 4\Delta_g\left(g(\partial_x z\cdot\partial_x\omega)z\right)
+2\partial_x\left((\partial_x z\cdot\partial_x\omega)(g\partial_x z-\partial_xgz)\right),
\end{eqnarray}
with $z=\frac{1}{2}(u+\tilde{u})$ and $Q=\frac{1}{2}\left(|\partial_xu|^2+|\partial_x \tilde{u}|^2\right).$
Multiplying (\ref{deriv5}) by $\omega,$ we find that \\ $|\omega|^2=2(z\wedge\Delta_g\omega)\cdot\omega,$ which means that
$\omega\in L^2(\mathbb{R}).$ Then, integrating by parts and using H\"{o}lder's inequality, we get 
\begin{eqnarray}\label{eg_un1}
\frac{d}{dt}\int_{\mathbb{R}}|\omega|^2&=&-2\int_{\mathbb{R}}g(\omega\wedge\partial_x z)\cdot\partial_x \omega\notag\\
&\leq& 2\|g\partial_x z\|_{L^\infty(\mathbb{R})}\|\omega\|_{L^2(\mathbb{R})}\|\partial_x \omega\|_{L^2(\mathbb{R})}\notag\\
&\leq& \|g\partial_x z\|_{L^\infty(\mathbb{R})}\|\omega\|^2_{H^1(\mathbb{R})}.
\end{eqnarray}
Next, we take the $L^2-$scalar product in (\ref{deriv5}) with $\Delta_g\omega.$ Integrating by parts and using H\"{o}lder inequality,
we get 
\begin{eqnarray}\label{eg_un2}
\frac{d}{dt}\int_{\mathbb{R}}g|\partial_x\omega|^2&=&\int_{\R}\partial_t|\partial_x\omega|^2
-2\int_{\mathbb{R}}g\partial_x(\omega\wedge\Delta_g z)\cdot\partial_x \omega\notag\\
&=&\int_{\R}\partial_t|\partial_x\omega|^2-2\int_{\mathbb{R}}g(\omega\wedge\partial_x\Delta_g z)\cdot\partial_x \omega\notag\\
&\leq& (\|\partial_tg\|_{L^\infty(\R)}+\|g\partial_x \Delta_g z\|_{L^\infty(\mathbb{R})})\|\omega\|^2_{H^1(\mathbb{R})}.
\end{eqnarray}
Thus, (\ref{ineg_un1}) holds from Corollaries \ref{cor_reg_1} and \ref{cor_reg_2} and from Sobolev embedding
\footnote{There exists $C>0$ such that $$\|u\|_{L^\infty(\mathbb{R})}\leq C\|u\|_{H^1(\mathbb{R})}, \forall u\in H^1(\mathbb{R}).$$}
after summing (\ref{eg_un1}) and (\ref{eg_un2}).

Finally, taking the $L^2-$scalar product in (\ref{deriv6}) with $\partial_t\omega$ and integrating by parts, we get
$$\frac{1}{2}\frac{d}{dt}\int_{\mathbb{R}}(|\partial_t\omega|^2+|\Delta_g\omega|^2)=I_1+I_2+I_3-2E_1-4E_2+E_3+2E_4,$$
with
$$I_1= \int_{\R}\Delta_{\partial_tg}\omega\cdot\Delta_g\omega,$$
$$I_2= \int_{\R}z\wedge\Delta_{\partial_tg}\omega\cdot\partial_t\omega,\quad I_3= \int_{R}\omega\wedge\Delta_{\partial_tg}z\cdot\partial_t\omega,$$
$$E_1=\int_{\mathbb{R}}\Delta_g(gQ\omega)\cdot\partial_t\omega,\quad
E_3=\int_{\mathbb{R}}\partial_x\left(Q(g\partial_x\omega-g'\omega)\right)\cdot\partial_t\omega,$$
$$E_2=\int_{\mathbb{R}}\Delta_g\left(g(\partial_x z\cdot\partial_x\omega)z\right)
\cdot\partial_t\omega,\quad
E_4=\int_{\mathbb{R}}\partial_x\left((\partial_x z\cdot\partial_x\omega)(g\partial_x z-g'z)\right)\cdot\partial_t\omega.$$
The terms $I_1, I_2, I_3, E_1, E_3$ and $E_4$ can be estimated by applying H\"{o}lder's inequality and Sobolev's embedding 
$H^1(\mathbb{R})\subset L^\infty(\mathbb{R})$. Applying the chain rule on $\Delta_g$, the term $E_2$ can be written
$$E_2=\int_{\mathbb{R}}g^2(\partial_x z\cdot\partial^3_x\omega)(z\cdot\partial_t\omega)+E_{21},$$
where $E_{21}$ can be estimating by using H\"{o}lder inequality and Sobolev embedding. Finally, we have
$z\cdot\partial_t\omega=-\omega\cdot\partial_t z$ (since $|u|^2-|\tilde{u}|^2=0$) and  
$$
\int_{\mathbb{R}}g^2(\partial_x z\cdot\partial^3_x\omega)(z\cdot\partial_t\omega)
= -2\int_{\mathbb{R}}g'g(\partial_x z\cdot\partial^2_x\omega)
(z\cdot\partial_t\omega)+\int_{\mathbb{R}}g^2\partial^2_x\omega\cdot
\partial_x\left((\omega\cdot\partial_t z)\partial_x z\right),
$$
which is now in a suitable form to be estimated as above. This yields the desired claim at the $H^2$ level.

\subsection{Proof of Theorem \ref{ex_fcb_g(gamma)}}
We construct a solution $\gamma \in L^\infty(0,T_1,H^3_{loc}(\mathbb{R}))$ for the system
\begin{equation}\label{FCBg}
\left\{\begin{array}{lr}
\partial_t \gamma=g(t,x,\gamma)\partial_x\gamma\wedge \partial^2_x\gamma,&\\
\gamma(0,.)=\gamma_0.&
\end{array}\right.
\end{equation}
as a limit, when $h\rightarrow 0$ , of a sequence $\{\gamma_h\}_h$ of solutions to the semi-discrete system
\begin{equation}\label{FCBgD}
\left\{\begin{array}{lr}
\frac{d\gamma_h}{dt}=g_hD^+\gamma_h\wedge D^2\gamma_h,& t>0,\\
\gamma_h(0)=\gamma_h^0,&
\end{array}\right.
\end{equation}
where $\gamma_h^0=\{\gamma_h^0(x_i)\}_i\in (\mathbb{R}^3)^{\mathbb{Z}_h}$ is such that $|D^+\gamma_h^0(x_i)|=1,$ and $g_h=\{g(t,x_i,\gamma_h^0(x_i))\}_i.$
We denote $u_h= D^+\gamma_h, g_h^t= \partial_tg(t,x_i,\gamma(x_i))$ and $\Delta_{g_h}u_h = D^+(g_hD^-u_h).$ Then, applying $D^+$ on (\ref{FCBgD}), we get
\begin{equation}\label{LIA_FCB_d}
\frac{du_h}{dt}= u_h\wedge \Delta_{g_h}u_h.
\end{equation}
We have
\begin{eqnarray*}
\frac{d}{dt}\sum_i(g\gamma_h|D^-u_h|^2)(x_i)&=& \sum_i\frac{d\gamma_h(x_i)}{dt}\cdot\nabla_\gamma g(t,x_i,\gamma_h(x_i))|D^-u_h(x_i)|^2\\
&& + \sum_ig_h^t(x_i)|D^-u_h(x_i)|^2+\sum_i\left(g_hD^-u_h\cdot D^-\frac{du_h}{dt}\right)(x_i).
\end{eqnarray*}
Then, using Lemma \ref{IPPd}, we obtain
$$h\sum_i (g_hD^-u_h\cdot D^-\frac{du_h}{dt})(x_i) = -\left(\Delta_{g_h}u_h,\frac{du_h}{dt}\right)_h=0.$$
Thus, using $\frac{d\gamma_h}{dt}=g_hu_h\wedge D^-u_h,$ we can write
\begin{eqnarray}\label{estim8}
\frac{d}{dt}\sum_i(g_h|D^-u_h|^2)(x_i)&\leq& \|\nabla_{\gamma}g\|_{L^\infty}|D^-u_h|_{L^\infty_h}\sum_i(g_h|D^-u_h|^2)(x_i)\notag\\
&& + \|\partial_tg\|_{L^\infty}\sum_i|D^-u_h(x_i)|^2.
\end{eqnarray}
To get another estimate in $|\Delta_{g_h}|_h,$ we derive (\ref{LIA_FCB_d}) with respect to $t$. This yields
\begin{eqnarray}\label{deriv12}
\frac{d^2u_h}{dt^2}&=& \frac{du_h}{dt}\wedge \Delta_{g_h}uh+ u_h\wedge \frac{d}{dt}\Delta_{g_h}u_h\notag\\
&=& (u_h\wedge\Delta_{g_h}u_h)\wedge\Delta_{g_h}u_h \notag\\
&&+u_h\wedge \left(D^+\left(\frac{d\gamma_h}{dt}\cdot\nabla g(\gamma_h)D^-u_h\right)+\Delta_{g_h}\left(\frac{du_h}{dt}\right)+
\Delta_{g^t_h}u_h\right).
\end{eqnarray}
Next, we denote  
$$\tilde{\Delta}_{g_h}u_h=D^+\left(g_h(u_h\wedge D^-u_h\cdot\nabla g(\gamma_h))D^-u_h\right),$$
hence (\ref{deriv12}) becomes 
\begin{equation}
\frac{d^2u_h}{dt^2} = (u_h\wedge\Delta_{g_h}u_h)\wedge\Delta_{g_h}u_h + u_h\wedge \Delta_{g_h}(u_h\wedge\Delta_{g_h}u_h) 
+u_h\wedge (\tilde{\Delta}_{g_h}u_h+\Delta_{g^t_h}u_h).
\end{equation}
Repeating the same calculus as in (\ref{deriv7}), we get  
\begin{equation}\label{deriv13}
\frac{d^2u_h}{dt^2}+\Delta_{g_h}^2u_h=(u_h\cdot\Delta_{g_h}u_h )\Delta_{g_h}u_h-|\Delta_{g_h}u_h|^2u_h +(u_h\cdot\Delta_{g_h}^2u_h )u_h
+u_h\wedge (\tilde{\Delta}_{g_h}u_h+\Delta_{g^t_h}u_h)+ E,
\end{equation}
where 
\begin{eqnarray*}
E&=& \frac{h^2}{2}D^+[g_h(D^-u_h)^2D^-\Delta_{g_h}u_h]\\
&& -g_hD^-(A_{g_h}u_h)D^-u_h-g_h(D^- u_h\cdot\tau^-\Delta_{g_h}u_h)D^-u_h\\
&&-\tau^+g_hD^+(A_{g_h}u_h)D^+u_h-\tau^+g_h(D^+u_h\cdot\tau^+\Delta_{g_h}u_h)D^+u_h.
\end{eqnarray*}
Taking the $L_h^2-$scalar product in (\ref{deriv13}) with $\frac{du_h}{dt}$ and using both $u_h\cdot\frac{du_h}{dt}=0$ and
$\Delta_{g_h}u_h\cdot\frac{du_h}{dt}=0,$ we get by integration by parts 
$$\frac{1}{2}\frac{d}{dt}\left|\frac{du_h}{dt}\right|_h^2+\left(\Delta_{g_h}u_h,\Delta_{g_h}\left(\frac{du_h}{dt}\right)\right)_h=I+
\left(u_h\wedge \tilde{\Delta}_{g_h}u_h,\frac{du_h}{dt}\right)_h.$$
where $I = \left(E,\frac{du_h}{dt}\right)_h.$ We have  
$$\Delta_{g_h}\left(\frac{du_h}{dt}\right)= \frac{d}{dt}\Delta_{g_h}u_h-\tilde{\Delta}_{g_h}u_h-\Delta_{g^t_h}u_h.$$
Consequently,
$$\frac{1}{2}\frac{d}{dt}\left(\left|\frac{du_h}{dt}\right|_h^2+|\Delta_{g_h}u_h|_h\right)=I
+\left(\tilde{\Delta}_{g_h}u_h+\Delta_{g^t_h}u_h,\Delta_{g_h}u_h\right)_h 
+\left(u_h\wedge(\tilde{\Delta}_{g_h}u_h+\Delta_{g^t_h}u_h),u_h\wedge\Delta_{g_h}u_h\right)_h.$$
We know that $g_h$ and $D^+g_h$ are bounded in $L^\infty(0,T,L_h^\infty)$ by $\beta = \|g\|_{L^\infty(0,T,L^\infty)}$ and \\
 $\beta' = \|\partial_xg\|_{L^\infty(0,T,L^\infty)}+\|\nabla_\gamma g\|_{L^\infty(0,T,L^\infty)}$ respectively. Thus by following
the same calculus  in the proof of  Theorem \ref{th_ex_npf}, we find that there exists $C_1= C_1(\alpha ,\beta, \beta')>0$ such that
\begin{equation}\label{estim9}
I\leq C_1|D^+u_h|_{L^\infty_h}^2(|\Delta_{g_h}u_h|_h^2+|D^+u_h|_h^2+\left|\frac{du_h}{dt}\right|_h^2).
\end{equation}
To find a suitable estimate for the term $\left(\tilde{\Delta}_{g_h}u_h,\Delta_{g_h}u_h\right)_h,$ we first rewrite
\begin{eqnarray*}
\tilde{\Delta}_{g_h}u_h &=&D^+\left(g_h(u_h\wedge D^-u_h\cdot\nabla_\gamma g(\gamma_h))D^-u_h\right)\\
&=&(u_h\wedge D^-u_h.\nabla_\gamma g(\gamma_h))\Delta_{g_h}u_h +\tau^+g_hD^+(u_h\wedge D^-u_h.\nabla_\gamma g(\gamma_h))D^+u_h \\
&=&(u_h\wedge D^-u_h.\nabla_\gamma g(\gamma_h))\Delta_{g_h}u_h +\tau^+g_h(u_h\wedge D^2u_h.\nabla_\gamma g(\gamma_h))D^+u_h\\
&& + \tau^+g_h\left(u_h\wedge D^-u_h.D^+(\nabla_\gamma g(\gamma_h))\right)D^+u_h.
\end{eqnarray*}
The term $D^+(\nabla_\gamma g(\gamma_h))$ is bounded in  $L^\infty(0,T,L_h^\infty)$ by 
$\beta''= \|\partial_x\nabla_\gamma g\|_{L^\infty(0,T,L^\infty)}+\|\nabla^2_\gamma g\|_{L^\infty(0,T,L^\infty)}.$
It follows that  
\begin{eqnarray}\label{estim10A}
\left(\tilde{\Delta}_{g_h}u_h,\Delta_{g_h}u_h\right)_h &\leq& \beta ' |D^-u_h|_{L^\infty_h}|\Delta_{g_h}u_h|_h^2+
\beta ' |D^+u_h|_{L^\infty_h}|\tau^+D^2u_h|_h|\Delta_{g_h}u_h|_h\notag \\
&& +\beta \beta'' |D^+u_h|_{L^\infty_h}|D^+u_h|_h.
\end{eqnarray}
Furthermore, we have $\Delta_{g^t_h}u_h =D^+g^t_hD^-u_h+\tau^+g^t_hD^2u_h,$ then the two terms  
 $\tau^+g^t_h$ and $D^+g^t_h$ are bounded in  $L^\infty(0,T,L_h^\infty)$ by $\beta_1 = \|\partial_tg\|_{L^\infty(0,T,L^\infty)}$
and $\beta'_1 = \|\partial_t\partial_xg\|_{L^\infty(0,T,L^\infty)}+\|\partial_t\nabla_\gamma g\|_{L^\infty(0,T,L^\infty)}$ respectively.
Then we have 
\begin{equation}\label{estimgama}
 \left(\Delta_{g^t_h}u_h,\Delta_{g_h}u_h\right)_h \leq (\beta'_1|D^-u_h|_h+\beta_1|D^2u_h|_h)|\Delta_{g_h}u_h|_h.
\end{equation}
Using inequality $|\tau^+D^2u_h|_h  \leq |\Delta_{g_h}u_h|_h +\beta'|D^+u_h|_h$ together with 
 (\ref{estim8}), (\ref{estim9}), (\ref{estimgama}) and (\ref{estim10A}), we find 
that there exists $C= C(\alpha, \beta,\beta_1, \beta',\beta'_1, \beta'')$ such that
\begin{eqnarray*}
\frac{d}{dt}\left(\left|\frac{du_h}{dt}\right|_h^2+|\Delta_{g_h}u_h|_h+h\sum_i[g_h|D^+u_h|^2](x_i)\right)
&\leq& C\left(|D^+u_h|_{L^\infty_h}^2+|D^+u_h|_{L^\infty_h}\right)\\
&& \times \left(|\Delta_{g_h}u_h|_h^2+|D^+u_h|_h^2 +|D^+u_h|_h+\left|\frac{du_h}{dt}\right|_h^2\right).
\end{eqnarray*}
In view of Lemma \ref{Sobolev_discret}, there exists $\tilde{C}>0$ and $C= C(\alpha, \beta')>0$ such that
$$|D^+u_h|^2_{L^\infty_h}\leq C|D^+u_h|^2_{H^1_h}\leq C\tilde{C}(|\Delta_{g_h}u_h|_h^2+|D^+u_h|_h^2).$$
This implies the existence of two constants $C_1, C_2 >0$ depending on $\alpha,$ $\beta,$ $\beta_1,$ $\beta'$ $\beta'_1,$  and $\beta''$ such that
\begin{equation}\label{estim11}
\frac{d}{dt}\left(\left|\frac{du_h}{dt}\right|_h^2+|\Delta_{g_h}u_h|_h+h\sum_i[g_h|D^+u_h|^2](x_i)\right)\leq
C_1\left(|\Delta_{g_h}u_h|_h^2+|D^+u_h|_h^2+\left|\frac{du_h}{dt}\right|_h^2\right)^2+C_2.
\end{equation}
We construct now the sequence $\{\gamma_h^0\}$ such that 
\begin{equation}\label{cond_di1_gamma}\left\{\begin{array}{lr}
Q_h\gamma_h^0\rightarrow \gamma_0\quad\text{in}\quad L_{loc}^2(\mathbb{R}),&\\
Q_hD^+\gamma_h^0 \rightarrow \frac{d\gamma_0}{dx}\quad\text{in}\quad L_{loc}^2(\mathbb{R}),&\\
Q_hD^2\gamma_h^0 \rightarrow \frac{d^2\gamma_0}{dx^2}\quad\text{in}\quad L^2(\mathbb{R}),&\\
Q_hD^3\gamma_h^0 \rightarrow \frac{d^3\gamma_0}{dx^3}\quad\text{in}\quad L^2(\mathbb{R}).&\\
\end{array}\right.\end{equation}
Thus we have 
\begin{lemma}\label{lem_born3}
There exists $T_1>0$ such that
\begin{enumerate}
 \item The two sequences $\{P_h\gamma_h\}_h$ and $\{P_hu_h\}_h$ are bounded in $L^\infty(0,T_1,H^1_{loc}(\mathbb{R}))$.
 \item The sequences $\{\partial_tP_hu_h\}_h,$ $\{\partial_tP_h\gamma_h\}_h,$ $\{P_hD^+u_h\}_h$ and $\{P_hD^2u_h\}_h$ are bounded in $L^\infty(0,T_1,L^2(\mathbb{R}))$.
\end{enumerate}
\end{lemma}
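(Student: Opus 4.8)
The plan is to run the same scheme as in Lemma~\ref{lem_born2}, with the simplification that here only the first energy estimate \eqref{estim11} is needed: Theorem~\ref{ex_fcb_g(gamma)} asks for $\gamma\in H^3_{loc}$, that is $D^2u_h=D^3\gamma_h$ bounded in $L^2_h$, rather than the higher regularity $D^3u_h$ that in Theorem~\ref{th_ex_npf} forced the delicate second estimate \eqref{gronwall2}. First I would record that the discrete length is preserved: taking the $\mathbb{R}^3$-scalar product of \eqref{LIA_FCB_d} with $u_h$ gives $\frac{d}{dt}|u_h(t,x_i)|^2=0$, so $|u_h(t,x_i)|=|D^+\gamma_h^0(x_i)|=1$ for all $t$. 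Using $\alpha\le g_h\le\beta$, the bracketed quantity on the left of \eqref{estim11} is comparable to the energy $y(t):=\bigl|\tfrac{du_h}{dt}\bigr|_h^2+|\Delta_{g_h}u_h|_h^2+|D^+u_h|_h^2$, and \eqref{estim11} then reads as a Riccati inequality $y'\le C_1y^2+C_2$ with $C_1,C_2$ independent of $h$.

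Next I would bound $y(0)$ uniformly in $h$. From \eqref{cond_di1_gamma} the quantities $|D^+u_h^0|_h=|D^2\gamma_h^0|_h$ and $|D^2u_h^0|_h=|D^3\gamma_h^0|_h$ are bounded; since $\Delta_{g_h}u_h=D^+g_h\,D^+u_h+g_hD^2u_h$ and $\tfrac{du_h}{dt}(0)=u_h^0\wedge\Delta_{g_h}u_h^0$, both $|\Delta_{g_h}u_h(0)|_h$ and $\bigl|\tfrac{du_h}{dt}(0)\bigr|_h$ are bounded independently of $h$, whence $y(0)\le M$ for some $M>0$. The comparison argument of Lemma~\ref{lem_born2} then applies verbatim: fixing $T>\tfrac{1}{\sqrt{C_1C_2}}$ and setting $G(t)=C_2T+y(0)+C_1\int_0^t y(\tau)^2\,d\tau$ one gets $y\le G$, hence $(1/G)'\ge -C_1$ and $G(t)\le G(0)/(1-C_1G(0)t)$; choosing $T_1=\tfrac12(C_1M)^{-1}$ yields $y(t)\le 2M$ on $[0,T_1]$ with $T_1$ independent of $h$.

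From $y\le 2M$ on $[0,T_1]$ I read off that $\{|\tfrac{du_h}{dt}|_h\}$, $\{|\Delta_{g_h}u_h|_h\}$ and $\{|D^+u_h|_h\}$ are bounded in $L^\infty(0,T_1)$; inverting $\Delta_{g_h}u_h=D^+g_h\,D^+u_h+g_hD^2u_h$ with $g_h\ge\alpha$ yields the bound on $\{|D^2u_h|_h\}$. For the curve itself I would use $\tfrac{d\gamma_h}{dt}=g_hu_h\wedge D^-u_h$, bounded in $L^2_h$ because $|u_h|=1$, $g_h\le\beta$ and $|D^-u_h|_h=|D^+u_h|_h$ is bounded; this gives the bound on $\{\partial_tP_h\gamma_h\}$, and integrating in time from $\gamma_h^0$ bounds $\gamma_h$ in $L^\infty(0,T_1,L^2_{loc})$. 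Together with $D^+\gamma_h=u_h$ this gives the $H^1_{loc}$ bound for $\{P_h\gamma_h\}$, while $|u_h|=1$ and the bound on $|D^+u_h|_h$ give it for $\{P_hu_h\}$. Transferring these discrete bounds to the continuous interpolants through Lemmas~\ref{equiv_norms}, \ref{lem_born} and \ref{lem_cv_l2f} then produces all the asserted $L^\infty$ estimates.

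The step I expect to be the real obstacle is the $\gamma$-dependence of $g_h$, which is exactly where the difference from Theorem~\ref{th_ex_npf} lies: the chain rule produces $D^+g_h$ and $D^2g_h$ terms carrying $\nabla_\gamma g$ paired with $D^+\gamma_h=u_h$ and $D^2\gamma_h$, and these must be absorbed into the $h$-independent constants $\beta,\beta',\beta''$ precisely as in deriving \eqref{estim8}--\eqref{estim11}. The delicate point is that the scheme \emph{closes}: no derivative of $u_h$ of order higher than those already controlled by $y$ enters, so the Riccati inequality is genuinely self-contained and the resulting lower bound $T_1$ does not degenerate as $h\to0$.
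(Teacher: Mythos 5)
Your proposal is correct and follows essentially the same route as the paper: the paper's proof of Lemma \ref{lem_born3} likewise applies the Riccati/comparison argument of Lemma \ref{lem_born2} to the energy inequality \eqref{estim11} to obtain the uniform bound \eqref{estim12} on a common interval $[0,T_1]$, and then transfers the discrete bounds to the interpolants via Lemmas \ref{equiv_norms} and \ref{Sobolev_discret}. The only inessential difference is how the local $L^2$ bound on the curve is obtained: you integrate $\partial_t\gamma_h=g_hu_h\wedge D^-u_h$ in time using its global $L^2_h$ bound together with the local bound on $\gamma_h^0$, whereas the paper controls the single value $\gamma_h(t,0)$ through the discrete Sobolev embedding and then invokes the Poincar\'e-type inequality \eqref{estim122} combined with $|D^+\gamma_h|=1$.
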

\begin{proof}
Following the same steps in the proof of Lemma \ref{lem_born2}, we find that there exists $T_1>0$ and $M>0$ such that
for almost every $t\in ]0, T_1[,$ we have  
\begin{equation}\label{estim12}
|D^+u_h|_h^2+|D^2u_h|_h^2 +\left|\frac{du_h}{dt}\right|_h^2\leq M.
\end{equation}
To prove 1, let  $L>0.$  We denote $N = E(\frac{L}{h})+1.$ Since 
\begin{equation}\label{estim122}
\|P_h\gamma_h(t)\|_{H^1(-L,L)}\leq \sqrt{2L}|P_h\gamma_h(t,0)|+(2L+1)\|\partial_xP_h\gamma_h(t)\|_{L^2(-L,L)},
\end{equation}
and
\begin{eqnarray*}
|P_h\gamma_h(t,0)| &=& |\gamma_h(t,0)|\\
&\leq& |\gamma_h(0,0)|+ T_1 \left\|\frac{d}{dt}\gamma_h(.,0)\right\|_{L^\infty(0,T_1)} \\
&\leq& |\gamma_h(0,0)|+ T_1\beta \|D^-u_h(.,0)\|_{L^\infty(0,T_1)} \\
&\leq& |\gamma_h(0,0)|+ T_1\beta \sup_{\tau \in [0,T]}|D^-u_h(\tau,.)|_{L^\infty_h}, \\
\end{eqnarray*}
inequality (\ref{estim12}) together with Lemma \ref{Sobolev_discret} imply the existence of a constant $C>0$ such that
\begin{equation}\label{estim13}
|P_h\gamma_h(t,0)| \leq |\gamma_h^0(0)|+ CT_1\beta\sqrt{M},
\end{equation}
for almost every $t\in ]0, T_1[.$ To estimate the second term of the right-hand side of (\ref{estim122}), we write
\begin{equation}\label{estim14}
\|\partial_xP_h\gamma_h\|^2_{L^2(-L,L)} = \sum_{i=-N}^{N-1}\int_{x_i}^{x_{i+1}}|D^+\gamma_h(x_i)|^2dx \leq 2L+h,
\end{equation}
hence we find that for almost every $t\in ]0, T_1[,$  
\footnote{It is possible to define $\{\gamma_h^0\}_h$ by $$\gamma_h^0(x_i)=\gamma_0(x_i),\forall i\in \mathbb{Z},$$ hence $\gamma_h^0(0)=\gamma_0(0).$ }
$$\|P_h\gamma_h(t)\|_{H^1(-L,L)}\leq \sqrt{2L}(|\gamma_0(0)|+ CT_1\beta\sqrt{M})+(2L+1)^2.$$
On the other hand, we have
\begin{eqnarray*}
\|P_hu_h\|^2_{H^1(-L,L)}&=&\sum_{i=-N}^{N-1}\int_{x_i}^{x_{i+1}}\left|\frac{x_i-x}{h}u_h(x_i)+\frac{x-x_i}{h}u_h(x_{i+1})\right|^2dx+
\sum_ih\left|\frac{u_h(x_i)-u_h(x_{i+1})}{h}\right|^2dx\\
&\leq&\sum_{i=-N}^{N-1}\frac{h}{3}\left(|u_h(x_i)|^2+|u_h(x_{i+1})|^2+u_h(x_i)u_h(x_{i+1})\right)+|D^+u_h|^2_h\\
&\leq& 2L+1+M.
\end{eqnarray*}
This completes the proof of 1.

Property 2 is an immediate result of (\ref{estim13}) and Lemma \ref{equiv_norms}.
\end{proof}

Lemma \ref{lem_born3} together with Lemma \ref{compa} imply the existence of $u, \gamma\in L^\infty(0,T_1,L^2_{loc}(\mathbb{R})),$ 
$\omega, v \in L^\infty(0,T_1,L^2(\mathbb{R})),$ and two subsequences $\{\gamma_h\}_h$ and $\{u_h\}_h$ such that
\begin{equation}\left\{\begin{array}{lr}
P_h\gamma_h \rightarrow \gamma \quad\text{in}\quad L^2(0,T_1,L^2_{loc}(\mathbb{R}))\quad \text{and almost everywhere},&\\
\partial_tP_h\gamma_h \rightarrow \partial_t\gamma \quad\text{in}\quad L^\infty(0,T_1,L^2(\mathbb{R}))
\quad \text{weak star},&\\
P_hu_h \rightarrow u \quad\text{in}\quad L^2(0,T_1,L^2_{loc}(\mathbb{R}))\quad \text{and almost everywhere},&\\
P_hD^-u_h \rightarrow v \quad\text{in}\quad L^\infty(0,T_1,L^2(\mathbb{R}))\quad \text{weak star},&\\
P_hD^2u_h \rightarrow w \quad\text{in}\quad L^\infty(0,T_1,L^2(\mathbb{R}))\quad \text{weak star}.&\\
\end{array}\right.\end{equation}
It follows that $\{\partial_xP_hu_h\}_h$ converges to $\partial_xu$ in the sense of distributions and, since
$\partial_xP_hu_h = Q_hD^+u_h,$ we also have $\partial_xu=v \in L^\infty(0,T_1,L^2(\mathbb{R})).$
We now prove that $\{P_h(g_hu_h\wedge D^-u_h)\}_h$ converges to $g(\gamma)u\wedge\partial_x u$ 
in $ L^\infty(0,T_1,L^2(\mathbb{R}))$ weak star. We first note that  
$$Q_h(g_hu_h\wedge D^-u_h)= g(Q_h\gamma_h)Q_hu_h\wedge Q_hD^-u_h.$$
This implies that the sequence $\{Q_h(g_hu_h\wedge D^-u_h)\}_h$ converges to $g(\gamma)u\wedge\partial_x u$ in $ L^\infty(0,T_1,L^2(\mathbb{R}))$
weak star. In view of Lemma \ref{lem_cv_l2f}, the two sequences $\{Q_h(g_hu_h\wedge D^-u_h)\}_h$ and $\{P_h(g_hu_h\wedge D^-u_h)\}_h$ converge
to the same limit. Since $\{\partial_tP_h\gamma_h\}_h$ converges to $\partial_t\gamma$ in
$ L^\infty(0,T_1,L^2(\mathbb{R}))$ weak star, we finally get
\begin{equation}
\partial_t\gamma=g(\gamma)u\wedge\partial_xu.
\end{equation}
Thus to complete this proof, it suffices to show that $\partial_x\gamma=u$ and that $\partial^2_x u\in L^\infty(0,T_1,L^2(\mathbb{R})).$
The sequence $\{\partial_xP_h\gamma_h\}_h$ converges to $\partial_x\gamma$ in the sense of distributions. On the other hand, we have 
$\partial_xP_h\gamma_h = Q_hD^+\gamma_h= Q_hu_h,$ and the sequence $\{Q_hu_h\}_h$ converges to $u$ in $L^\infty(0,T_1,L^2_{loc}(\mathbb{R}))$.
Indeed, for $L>0$ and $N= E(\frac{L}{h})+1,$ we have 
\begin{eqnarray*}
\|Q_hu_h-P_hu_h\|^2_{L^2(-L,L)}&\leq&\sum_{i=-N}^{N-1}\int_{x_i}^{x_{i+1}}|D^+u_h(x_i)|^2(x-x_i)^2dx\\
&\leq& \frac{2}{3}N|D^+u_h|^2_{L_h^\infty}h^3\\
&\le& \frac{2}{3}C(L+h)|D^+u_h|^2_{H^1_h}h^2\\
&\le& \frac{2}{3}CM(L+h)h^2,
\end{eqnarray*}
hence $$\partial_x\gamma=u.$$
The sequence $\{\partial_xP_hD^-u_h\}_h$ converges to $\partial^2_xu$ in the sense of distributions.
We have $\partial_xP_hD^-u_h = Q_hD^2u_h,$ and in view of Lemma \ref{lem_cv_l2f}, the two sequences $\{Q_hD^2u_h\}_h$ and $\{P_hD^2u_h\}_h$ 
converge to the same limit in $L^\infty(0,T_1,L^2(\mathbb{R}))$ weak star. Thus
$\partial^2_xu = w \in L^\infty(0,T_1,L^2(\mathbb{R})).$ 

\bibliographystyle{plain}
\bibliography{art_FPCB_biblio}
\end{document}